\title[The one-phase Muskat problem]{Global  well-posedness for the one-phase Muskat problem in 3D}
\author{Hongjie Dong}
\address{Division of Applied Mathematics, Brown University, Providence, RI 02912}
\email{hongjie$\_$dong@brown.edu }
\author{Francisco Gancedo}
\address{Departamento de An\'alisis Matem\'atico $\&$ IMUS, Universidad de Sevilla, Sevilla, Spain}
\email{fgancedo@us.es}
\author{Huy Q. Nguyen}
\address{Department of Mathematics, University of Maryland, College Park, MD 20742}
\email[H. Nguyen]{hnguye90@umd.edu}
\newcommand{\bq}{\begin{equation}}
\newcommand{\eq}{\end{equation}}
\newcommand{\bqa}{\begin{eqnarray*}}
\newcommand{\eqa}{\end{eqnarray*}}
\theoremstyle{plain}
\newtheorem{theo}{Theorem}[section]
\newtheorem{prop}[theo]{Proposition}
\newtheorem{lemm}[theo]{Lemma}
\newtheorem{coro}[theo]{Corollary}
\newtheorem{assu}[theo]{Assumption}
\newtheorem{defi}[theo]{Definition}
\theoremstyle{definition}
\newtheorem{rema}[theo]{Remark}
\newtheorem{nota}[theo]{Notation}
\DeclareMathOperator{\supp}{supp}
\DeclareMathOperator{\Lip}{Lip}
\DeclareSymbolFont{pletters}{OT1}{cmr}{m}{sl}
\DeclareMathSymbol{s}{\mathalpha}{pletters}{`s}
\def\tt{\theta}
\def\eps{\varepsilon}
\def\na{\nabla}
\def\les{\lesssim}
\def\mez{\frac{1}{2}}
\def\tdm{\frac{3}{2}}
\def\Rr{\mathbb{R}}
\def\T{\mathbb{T}}
\def\Zz{\mathbb{Z}}
\def\cF{\mathcal{F}}
\def\cK{\mathcal{K}}
\def\cS{\mathcal{S}}
\def\ld{\lambda}
\def\p{\partial}
\def\na{\nabla}
\def\wc{\rightharpoonup}
\def\ka{\kappa}
\def\wt{\widetilde}
\def\om{\omega}
\def\wp{\wedge}
\def\dv{\text{div}}
\def\sgn{\text{sgn}}
\numberwithin{equation}{section}
\begin{document}
\newcommand{\huy}[1]{{\color{orange} \textbf{H:} #1}}
\begin{abstract}
This paper is concerned with the long time dynamics of the free boundary of a Darcy fluid in three space dimensions, also known as the one-phase Muskat problem. The dynamics of the free boundary is governed by a nonlocal fully nonlinear  parabolic partial differential equation. It is proven that for any periodic Lipschitz graph given as initial data, the problem has a unique global-in-time solution which satisfies the equation in the strong sense. Moreover, all H\"older norms of the solution decay exponentially in time.  These results have been previously established in  two space dimensions. This paper addresses new challenges to extend the results to the more difficult three dimensional setting. The approach developed is critical in three space dimensions and crucially relies on Dahlberg-Kenig's $W^{1, 2+\varepsilon}$ optimal regularity for layer potentials  together with delicate structures of the Dirichlet-to-Neumann operator and layer potentials in Lipschitz domains of $\mathbb{T}^2\times \mathbb{R}$.
\end{abstract}


\noindent\thanks{\em{ MSC Classification:  35R35, 35Q35, 76B03, 35D35, 35D40.}}

\maketitle

\section{Introduction}
Long time dynamics of free boundary problems is a challenging issue in fluid mechanics. For prototypical problems such as  the free boundary Euler and Navier-Stokes equations, the Muskat problem and the Hele-Shaw problem, it has been proven that local-in-time smooth solutions may develop finite time singularities in certain scenarios. These include the self-intersection of the free boundary  at a single point (splash singularity) for the free boundary Euler \cite{CCFGG2012, CCFGG2013, CoutandShkoller2014} and Navier-Stokes equations \cite{CCFGG2019,  CoutandShkoller2019} and the Muskat problem \cite{CCFG2013}, and the self-intersection of free boundary along an arc  (splat singularity) for the free boundary Euler equations \cite{CCFGG2013, CoutandShkoller2014}. Another type of singularity is the breakdown of smoothness for the Muskat problem \cite{CCFGL2012, CCFG2013-2}. As for the two-dimensional horizontal Helle-Shaw problem, at any positive time the free boundary is either analytic or has a cusp-type singularity \cite{Sakai}.

In view of the above finite time singularities, it is natural to develop well-posedness (existence and uniqueness) theories which can describe the global-in-time dynamics of the solution past singularities. Such a theory is currently not available for the free boundary Euler and 3D Navier-Stokes equations. Regarding the free boundary 2D Navier-Stokes equations with two fluids, global well-posedness for large solutions was obtained in \cite{GG2018, DanchinMucha} for equal viscosities and in \cite{PaicuZhang, GG2022} for small viscosity difference. On the other hand, several approaches have been developed for the horizontal  (no gravity) Hele-Shaw problem. Elliot-Janovsk\'y \cite{Elliot}  formulated the  two-dimensional horizontal Hele-Shaw problem as an elliptic variational inequality for each time and proved  the existence and uniqueness  of global $H^1$ weak solutions. Kim \cite{Kim03} introduced and proved the existence and uniqueness of global continuous viscosity solutions (\`a la Crandall-Lions) to the horizontal Hele-Shaw problem in all dimensions. Although both approaches \cite{Elliot} and \cite{Kim03} do not impose any size restriction on the solution,  the unknowns are defined in the bulk of the fluid (e.g. the fluid pressure) and the obtained regularity is insufficient to make sense of the {\it pointwise dynamics of the free boundary}. The aforementioned result in \cite{Sakai} provides alternatives for regularity of the free boundary of Elliot's variational solutions. On the other hand, a consequence of the works \cite{CJK1, CJK2} concerning Kim's viscosity solutions is that if the initial free boundary is a Lipschitz graph with sufficiently {\it small Lipschitz constant} then the free boundary is a $C^1$ graph at any positive time  and hence its dynamics is satisfied pointwise.

In the present work we are interested in the Muskat problem and our goal is twofold. First, we aim to develop a global well-posedness theory that simultaneously accommodates solutions of arbitrary size and is capable of describing the pointwise dynamics of the free boundary. Second, we aim to utilize the developed theory to deduce long time behavior of large solutions. In order to achieve these goals we consider the one-phase Muskat problem for the graphical free boundary of a fluid in porous media. Without imposing any symmetry, the general problem is posed in three space dimensions for the fluid, hence its free boundary is a two dimensional surface. In our previous work \cite{DGN}, by imposing the two dimensional symmetry of the fluid, we proved that if the initial free boundary is the graph of a periodic Lipschitz function then the equation describing its dynamics has a unique viscosity solution which satisfies the equation in $L^\infty_t L^2_x$, hence almost everywhere. This provides the first global well-posedness result without any size restriction for the Muskat problem. After that, it was shown in \cite{HN2023} that the global well-posedness result in \cite{DGN} is adequate to deduce  long time behavior of solutions, namely, all H\"older norms of the free boundary decay exponentially in time. In the present paper we prove that  the results in \cite{DGN} and \cite{HN2023} can be extended to the  full three dimensional setting. To state the results, we assume that the fluid domain at time $t$ is the lower half space below the graph of the free boundary function $f(x, t): \T^2\times \Rr_+\to \Rr$, i.e.,
\bq
\Omega_{f(t)}=\{ (x, z)\in \T^2\times \Rr: z<f(x, t)\}.
\eq
Here, we assume without loss of generality that $\T^2=\Rr^2/\Zz^2$. In porous media, the fluid motion is modeled by the Darcy law
\bq
\mu u+\na_{x, z} p=-\rho\vec{e_z},\quad  \na_{x, z}\cdot u=0\quad\text{in } \Omega_f,
\eq
where the positive constants $\mu$ and $\rho$ are respectively the dynamic viscosity and density of the fluid, and $u$ and $p$ are respectively the fluid velocity and pressure.  We have normalized the permeability constant and the acceleration due to gravity to unity.
The dynamic boundary condition states that the pressure is continuous across the free boundary. Assuming that the pressure in the dry region above $\Omega_{f(t)}$ is constant $P$, we have
\bq
p(x, f(x, t), t)=P.
\eq
The free boundary evolves according to the kinematic boundary condition
\bq
\p_t f(x, t)=u(x, f(x, t), t)\cdot N(x, t),\quad N=(-\na_x f, 1).
\eq
Letting $G(f)$ denote the Dirichlet-Neumann operator associated to the domain $\Omega_f$, the free boundary $f$ obeys the equation
\bq\label{Muskat:intro}
\p_tf=-\ka G(f)f,\quad \ka=\frac{\rho}{\mu}.
\eq
See Proposition \ref{prop:reformDN} below.

Our first main result is the global well-posedness of \eqref{Muskat:intro} for any Lipschitz initial data.
\begin{theo}\label{theo:main}
For any $f_0\in W^{1, \infty}(\T^2)$, there exist $\delta_*=\delta_*(\| f_0\|_{W^{1, \infty}})>0$ and a function
\bq\label{regf:1}
f\in C(\T^2\times [0, \infty))\cap L^\infty((0, \infty); W^{1, \infty}(\T^2))
\eq
with
\bq\label{regf:2}
 \p_t f\in L^\infty((0, \infty); L^{2+\delta_*}(\T^2))
\eq
such that the following holds.
\begin{itemize}
\item  $f$ satisfies  \eqref{Muskat:intro} in the $L^\infty_tL^{2+\delta_*}_x$ sense.
\item $f$ is the  unique viscosity solution (\`a la Crandall-Lions - see Defitnion \ref{def:viscosity}) of \eqref{Muskat:intro} with initial data $f_0$. In particular, the solution $f$ is stable in $L^\infty(\T^2)$; that is, any two solutions $f_1$ and $f_2$ as above satisfy
\bq
\| f_1(\cdot, t)-f_2(\cdot, t)\|_{L^\infty(\T^2)}\le \| f_1(\cdot, 0)-f_2(\cdot, 0)\|_{L^\infty(\T^2)}\quad\forall t>0.
\eq
Moreover, $\na f$ satisfies the maximum principle
\bq\label{maxslope:intro}
 \| \na f(t)\|_{L^\infty(\T^2)}\le \| \na f_0\|_{L^\infty(\T^2)}\quad\forall t>0.
\eq
\end{itemize}
\end{theo}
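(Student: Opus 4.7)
I would construct $f$ as a limit of smooth approximations $f^\epsilon$ solving \eqref{Muskat:intro} with mollified initial data $f_0^\epsilon \to f_0$ in $C(\T^2)$ satisfying $\| \nabla f_0^\epsilon \|_{L^\infty} \le \| \nabla f_0 \|_{L^\infty}$. For smooth data, local well-posedness of \eqref{Muskat:intro} in $H^s(\T^2)$ (for $s$ sufficiently large) is classical. The entire argument then reduces to producing two $\epsilon$-uniform a priori bounds: the gradient maximum principle \eqref{maxslope:intro}, and a uniform $L^{2+\delta_*}$ bound on $\partial_t f^\epsilon = -\kappa G(f^\epsilon) f^\epsilon$. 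Together these ensure each $f^\epsilon$ extends globally in time and that the family has enough compactness to pass to the limit.

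\textbf{The two uniform bounds.} For the gradient maximum principle, two routes are available. Analytically, I would differentiate \eqref{Muskat:intro} in $x_j$, test against $|\nabla f^\epsilon|^{p-2}\partial_j f^\epsilon$, sum in $j$, and let $p \to \infty$; the positivity of $G(f^\epsilon)$ contributes a nonnegative dissipation, while the commutator $[\partial_j, G(f^\epsilon)]$ is controlled through the paradifferential/layer-potential description of the DN operator on Lipschitz graphs. Alternatively, since every affine function is a stationary classical solution of \eqref{Muskat:intro}, a direct comparison yields $\| \nabla f^\epsilon(t) \|_{L^\infty} \le \| \nabla f_0 \|_{L^\infty}$ uniformly in $\epsilon$ and $t$. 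The critical ingredient is then the bound
\[
\| G(f^\epsilon) f^\epsilon \|_{L^{2+\delta_*}(\T^2)} \le C\bigl(\| \nabla f_0 \|_{L^\infty}\bigr),
\]
with $\delta_* > 0$ depending only on $\| \nabla f_0 \|_{L^\infty}$. The plan is to represent $G(f)$ through single and double layer potentials on $\partial \Omega_{f^\epsilon}$ and invoke the Dahlberg-Kenig optimal $W^{1, 2+\varepsilon}$ regularity for layer potentials on Lipschitz domains, adapted to the periodic setting $\T^2 \times \Rr$; by construction all implicit constants then depend only on the Lipschitz character of $\partial \Omega_{f^\epsilon}$, which in turn is governed by the gradient bound from the previous step.

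\textbf{Passage to the limit, viscosity framework, and main obstacle.} With $\nabla f^\epsilon$ uniformly in $L^\infty$ and $\partial_t f^\epsilon$ uniformly in $L^\infty_t L^{2+\delta_*}_x$, an Arzel\`a-Ascoli/Aubin-Lions argument extracts a subsequence converging uniformly on compact subsets of $\T^2 \times [0, \infty)$ to a limit $f$; continuity of $f \mapsto G(f) f$ under Lipschitz convergence of graphs, again a consequence of the layer potential framework, then lets me pass to the limit in \eqref{Muskat:intro} and obtain \eqref{regf:1}--\eqref{regf:2} with the equation holding in $L^\infty_t L^{2+\delta_*}_x$. I would next verify that any such $f$ is a Crandall-Lions viscosity solution in the sense of Definition \ref{def:viscosity}; uniqueness and $L^\infty$ stability then follow from the associated comparison principle, which adapts from \cite{DGN} since the argument there uses only the degenerate parabolic structure of $G(f)$ and not the ambient dimension. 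The hard part will be the critical $L^{2+\delta_*}$ estimate: in three dimensions, the pure $L^2$ energy identity for $\partial_t f$ is scale-critical and does not close, so the Dahlberg-Kenig gain $\delta_* > 0$ is indispensable, and one must both quantify $\delta_*$ in terms of $\| \nabla f_0 \|_{L^\infty}$ alone and establish the requisite layer-potential bounds on the unbounded periodic Lipschitz cylinder $\Omega_f \subset \T^2 \times \Rr$ rather than on a bounded Lipschitz domain, as in the classical Dahlberg-Kenig theorems.
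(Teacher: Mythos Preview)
Your scheme has a genuine gap at the claim ``Together these ensure each $f^\epsilon$ extends globally in time.'' You solve the \emph{unregularized} equation \eqref{Muskat:intro} with mollified data and appeal to the gradient maximum principle plus the $L^{2+\delta_*}$ bound on $\partial_t f^\epsilon$ to continue. But neither of these furnishes a continuation criterion in $H^s$: a uniform Lipschitz bound is not known to propagate higher Sobolev regularity for the 3D one-phase problem, and the $L^{2+\delta_*}$ bound on $G(f^\epsilon)f^\epsilon$ is one derivative short. The paper avoids this by adding a viscous term $\eps\Delta f^\eps$ (equation \eqref{Muskate}) and devoting Sections~\ref{section:H2est}--\ref{section:H3} to closing $H^2$ and $H^3$ estimates for the regularized problem. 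The $\eps\Delta$ dissipation is used throughout to absorb terms like $\|\nabla^3 f\|_{L^2}$ that the layer-potential analysis of $\nabla G(f)f$ cannot control by itself (see for instance the treatment of $I_{2,4}$ after \eqref{I24b}, where the bound $I_{2,4}\le C\|\nabla^3 f\|_{L^2}^{2/r}$ is absorbed only via Young's inequality against the viscous term). Without this mechanism your approximations are only local in time and you cannot extract a global limit.

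Two further points that you underestimate. First, the comparison principle does not carry over from \cite{DGN} formally: it requires the consistency of viscosity solutions (Proposition~\ref{prop:roughtest}), which rests on a pointwise $C^{1,\alpha}$ boundary estimate for harmonic functions (Theorem~\ref{theo:pointwiseelliptic}) that the paper must re-prove in 3D using Jerison--Kenig's $W^{1,3+\eps}$ estimate together with an explicit integral formula for the Dirichlet--Neumann operator on the sphere (Appendix~\ref{appendix:sphere}). Second, ``continuity of $f\mapsto G(f)f$ under Lipschitz convergence'' is not available in the form you need: you only have $f_n\to f$ in $C$, so $\nabla f_n$, $N_n$, $w_n$ converge merely weakly-$*$, while the integrand in \eqref{DN:int} is a singular kernel depending nonlinearly on $f_n$ multiplied by $w_n$ and $N_n$. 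The paper handles this via Propositions~\ref{prop:fundamentalsol} and~\ref{prop:GwN}, which uncover a divergence structure in the (implicit) fundamental solution on $\T^2\times\Rr$ allowing integration by parts onto test functions before invoking weak convergence.
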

The global well-posedness result in \cite{DGN} is a special case of Theorem \ref{theo:main} when $f_0(x_1, x_2)=f_0(x_1)$. It was proven in \cite{AgrawalPatelWu} that there exists a class of initial data $f_0(x_1)$ with isolated acute corners  that lead to unique  local-in-time solutions such that the angle of the corners are preserved for a short time. Therefore, there is no instant smoothing for general Lipschitz solutions in Theorem \ref{theo:main}.  Nevertheless, we can show as in \cite{HN2023}  that the regularity \eqref{regf:1} and \eqref{regf:2} suffice to deduce the following long time behavior of the solution.
\begin{prop}\label{prop:main}
Let $f_0 \in W^{1, \infty}(\T^2)$ have mean zero, and let $f$ be the unique global solution with initial data $f_0$, as given in Theorem \ref{theo:main}. For any $\alpha \in (0, 1)$, there exist positive constants $c_\alpha$ and $C_\alpha$ depending only on $\alpha$ such that
\bq\label{decay:Holder}
\| f(t)\|_{C^\alpha(\T^2)}\le C_\alpha\|  f_0\|_{W^{1, \infty}(\T^2)}\exp\left( -\frac{c_\alpha \ka t}{1+\| \na f_0\|_{L^\infty(\T^2)}}\right)\quad\forall t>0.
\eq
\begin{proof}
We first note that $f(t)$ has mean zero for all $t\ge 0$ because the Dirichlet-Neumann operator has mean zero.  Moreover, by \eqref{maxslope:intro} we have the uniform Lipschitz bound
\bq\label{decay:Lip}
\| \na f(t)\|_{L^\infty(\T^2)}\le \| \na f_0\|_{L^\infty(\T^2)}\quad\forall t>0.
\eq
Since $\p_tf=-\ka G(f)f\in L^\infty_t L^2_x$, we have
\bq\label{dtf:L2}
\mez \frac{d}{dt}\| f(t)\|_{L^2}^2=(\p_t f(t), f(t))_{L^2, L^2}=-\ka \left(G(f(t))f(t), f(t)\right)_{L^2, L^2}.
\eq
In order to lower bound the right-hand side, we appeal to the following coercive estimate for the Dirichlet-Neumann operator, proven in \cite[Proposition 2.2]{HN2023},
\bq\label{coercive:DN}
\left(G(f(t))f(t), f(t)\right)_{L^2, L^2}\ge \frac{c}{1+\| \na f(t)\|_{L^\infty(\T^2)}}\| f(t)\|_{\dot H^\mez(\T^d)}^2,
\eq
where $c>0$ is an absolute constant. Since $f(t)$ has mean zero, the $\dot H^\mez$ norm of $f$ controls the $L^2$ norm. Therefore, combining \eqref{dtf:L2}, \eqref{coercive:DN} and \eqref{decay:Lip} yields
\[
\mez \frac{d}{dt}\| f(t)\|_{L^2}^2\le  -\frac{c\ka}{1+\| \na f_0\|_{L^\infty(\T^2)}}\| f(t)\|_{L^2(\T^d)}^2.
\]
Then  Gr\"onwall's lemma implies
\bq\label{decay:L2}
\| f(t)\|_{L^2(\T^2)}\le  \| f_0\|_{L^2(\T^2)}\exp\left( -\frac{c\ka t}{1+\| \na f_0\|_{L^\infty(\T^2)}}\right)\quad\forall t>0,
\eq
 Finally, by  interpolating \eqref{decay:L2} and \eqref{decay:Lip}, we obtain the decay \eqref{decay:Holder} of the $C^\alpha$ norm for any $\alpha \in (0, 1)$.
\end{proof}
\end{prop}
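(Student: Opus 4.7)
The plan is a two-step energy-then-interpolation argument: first prove exponential decay of $\|f(t)\|_{L^2}$, then interpolate against the uniform Lipschitz bound supplied by Theorem~\ref{theo:main} to promote $L^2$ decay to $C^\alpha$ decay for every $\alpha\in(0,1)$.

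I would begin by observing that $G(f)$ maps into mean-zero functions (since constants are harmonic and the outward flux across $\partial\Omega_f$ of a constant is zero), so the evolution $\partial_t f=-\kappa G(f)f$ preserves zero mean, and $\int_{\T^2} f(\cdot,t)\,dx=0$ for all $t\ge 0$. Next, using that $\partial_t f\in L^\infty_tL^{2+\delta_*}_x\hookrightarrow L^\infty_tL^2_x$ and $f(t)\in L^\infty$, the map $t\mapsto \|f(t)\|_{L^2}^2$ is absolutely continuous and
\[
\tfrac{1}{2}\tfrac{d}{dt}\|f(t)\|_{L^2}^2 = -\kappa\bigl(G(f(t))f(t),\,f(t)\bigr)_{L^2}.
\]

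To turn this into exponential decay I would invoke two ingredients. The first is the coercive lower bound for the Dirichlet--Neumann operator on a Lipschitz graph, namely $(G(f)f,f)_{L^2}\ge c(1+\|\nabla f\|_{L^\infty})^{-1}\|f\|_{\dot H^{1/2}}^2$, which is already proven in \cite[Proposition~2.2]{HN2023} and applies verbatim here. The second is the maximum principle $\|\nabla f(t)\|_{L^\infty}\le\|\nabla f_0\|_{L^\infty}$ from Theorem~\ref{theo:main}, which turns the denominator into a time-independent constant. Since $f(t)$ has mean zero on $\T^2$, the Poincar\'e-type estimate $\|f\|_{\dot H^{1/2}}\gtrsim \|f\|_{L^2}$ applies, and Gr\"onwall yields
\[
\|f(t)\|_{L^2}\le \|f_0\|_{L^2}\exp\!\left(-\frac{c\kappa t}{1+\|\nabla f_0\|_{L^\infty}}\right).
\]

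Finally, I would combine this $L^2$ decay with the uniform bound $\|f(t)\|_{W^{1,\infty}}\le \|f_0\|_{W^{1,\infty}}$ (the $L^\infty$ part follows from the Lipschitz bound plus zero mean) via the standard interpolation $\|f\|_{C^\alpha}\lesssim_\alpha \|f\|_{L^2}^{\theta}\|f\|_{W^{1,\infty}}^{1-\theta}$ with $\theta=\theta(\alpha)\in(0,1)$, which yields \eqref{decay:Holder}. The main technical difficulty in this chain — establishing the coercive bound for $G(f)$ in the Lipschitz setting — is not encountered here, since it is already furnished by \cite{HN2023}; from the vantage point of this paper, the proposition is a direct corollary of Theorem~\ref{theo:main} and the coercive estimate, with Theorem~\ref{theo:main} providing the crucial strong-sense regularity $\partial_t f\in L^2$ and the gradient maximum principle that make the energy identity and the Gr\"onwall step rigorous.
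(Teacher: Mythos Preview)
Your proposal is correct and follows essentially the same approach as the paper: mean preservation, the $L^2$ energy identity, the coercive bound from \cite[Proposition~2.2]{HN2023} combined with the gradient maximum principle, Poincar\'e on $\T^2$ to pass from $\dot H^{1/2}$ to $L^2$, Gr\"onwall, and finally interpolation between $L^2$ decay and the uniform $W^{1,\infty}$ bound. The paper's proof is line-for-line the same argument.
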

Theorem \ref{theo:main} and Proposition \ref{prop:main} provide global existence, uniqueness and long time behavior of solutions to the one-phase Muskat problem with large data. On the other hand, long time dynamics of solutions with small data has been extensively investigated for the Muksat problem in various settings. See \cite{CCGS2013, CCGRS2016, CGS, GGPS, Matioc, Cameron1, Cameron2, CordobaLazar, GancedoLazar, Nguyen-Besov, AlazardNguyen} and the references therein.  We note in particular that for the 2D two-phase Muskat problem with equal viscosities, small Lipschitz initial data are studied in \cite{ChenNguyenXu} and \cite{GGHP}. It was proven in \cite{ChenNguyenXu} that there exists a unique local-in-time solution which is smooth in positive time.  If the initial data contains a finite set of small corners, then  \cite{GGHP} provides a description on how the corners desingularize and move at the same time.

\subsection{Challenges in three dimensions and main ideas}
The key motivation for the global well-posedness Theorem \ref{theo:main} is the fact that smooth solutions of \eqref{Muskat:intro} obey the comparison principle which in turn implies the maximum principle for  $\| f(\cdot, t)\|_{W^{1, \infty}}$.

 In order to obtain pointwise (almost everywhere) solutions we employ the direct method of vanishing viscosity, which consists of the regularized problem
\bq\label{eq:f:eps:intro}
\p_tf^\eps=-\ka G(f^\eps)f^\eps+\eps \Delta f^\eps,\quad f^\eps\vert_{t=0}=f^\eps_0,
\eq
where $f_0^\eps$ is a regularization of the initial data $f_0$.   The added dissipation $\eps \Delta f^\eps$ retains the comparison principle, hence smooth solutions of \eqref{eq:f:eps:intro} obey the Lipschitz maximum principle
\bq\label{Lip:intro}
\| f^\eps(\cdot, t)\|_{W^{1, \infty}}\le \| f^\eps_0\|_{W^{1, \infty}}.
\eq
The method consists of two steps: 1) establishing global regularity for \eqref{eq:f:eps:intro} and 2) proving that $f^\eps$ converges to a solution of the original Muskat equation \eqref{Muskat:intro} in the vanishing viscosity limit $\eps\to 0$. To achieve these tasks we represent the Dirichlet-Neumann operator $G(f)g$ using layer potentials. More precisely, we prove in Proposition \ref{prop:reformDN} that if $f\in W^{1, \infty}(\T^2)$ and $g\in H^1(\T^2)$ then
\bq\label{DN:int:intro}
(G(f)g)(x)=-p.v. \int_{\T^2}\left\{\nabla_{x,z} \Gamma(x-x',f(x)-f(x'))\wedge w(x')\right\}\cdot N(x)dx',
\eq
where  $w(x)=\partial_2\theta(x)T_1(x)-\partial_{1}\theta(x)T_2(x)$ and  $\tt$ satisfies
\bq\label{def:tt:intro}
(\mez I+K[f])\tt(x):= \mez \tt(x)+p.v.\int_{\T^2} \na_{x, z}\Gamma(x- x', f(x)-f(x'))\cdot N(x') \tt(x')dx'=g(x).
\eq
Here $N=(-\na_xf(x), 1)$ and $T_j=\p_{x_j}(x,  f(x))$ are normal and tangent vectors to the surface $\{y=f(x)\}$, and $\Gamma$ is the fundamental solution of the Laplace equation on $\T^2\times \Rr$. In contrast to the two dimensional problem on $\T\times \Rr$, $\Gamma$ is implicit in three dimensions, which introduces new challenges in both steps.

Regarding the global regularity, we obtain in Lemma \ref{lemA1} the representation
\bq\label{Gamma:intro}
\Gamma(x,z)=\frac{-\eta_1(x,z)}{4\pi \sqrt{|x|^2+z^2}}+\eta_2(x,z)+\eta_3(x,z)|z|,
\eq
where $\eta_j$ are smooth functions supported in specific regions (see \eqref{eq2.25}). From \cite{ABZ3, PoyferreNguyen2017, NguyenPausader2019} we know that $G(f)f$ is well behaved when $f$ belongs to subcritical Sobolev spaces $H^s(\T^2)$ (uniformly in time), $s>2$.  Therefore, key to the global regularity of \eqref{eq:f:eps:intro} are the $H^k$  estimates with $k\in \{1, 2, 3\}$. The $H^1$ estimate follows from the  $L^2$ bound
\bq
\| G(f)g\|_{L^2}\le C(\| f\|_{W^{1, \infty}})\| g\|_{H^1}.
\eq
As for the $H^2$ and $H^3$ estimates, we need to bound the first and second order derivatives of  $G(f)f$ in $L^2(\T^2)$. This is done through the representation \eqref{DN:int:intro}-\eqref{def:tt:intro} for $G(f)f$ using the intermediate variables $w$ and $\tt$. It was proven in \cite{Vorcheta} that for Lipschitz domains, i.e. $f\in W^{1, \infty}(\T^d)$, the operator $\mez I+K[f]$ is invertible on $H^1(\T^d)$, $d\ge 2$. For the 2D problem \cite{DGN}, this $H^1$ regularity of $\tt$ was sufficient to estimate $\p_xG(f)f$ in $L^2(\T)$. However, for the 3D problem the $H^1$ regularity of $\tt$ turns out to be critical. Fortunately, a result of Dahlberg-Kenig on optimal regularity of layer potentials in Lipschitz domains implies that there exists $\eps_*=\eps_*(\| f\|_{W^{1, \infty}(\T^2)})>0$ such that $\mez I+K[f]: W^{1, p}(\T^2)\to W^{1, p}(\T^2)$ is invertible for any $p\in (1, 2+\eps_*)$. Consequently, $\tt \in L^\infty_t W^{1, 2+\delta_*}_x$ for some $\delta_*>0$ depending only on $\| f\|_{W^{1, \infty}(\T^2)}$ which is bounded by virtue of the maximum principle \eqref{Lip:intro}. This yields an extra uniform-in-$\eps$ H\"older regularity of $\tt$ which in turn provides extra cancelations in the nonlinear singular integral operators in \eqref{DN:int:intro} and \eqref{def:tt:intro}. With this and careful decompositions of the kernels, we are able to close the $H^2$ and $H^3$ estimates for $f^\eps$, thereby concluding the global regularity of \eqref{eq:f:eps:intro} together with the uniform bounds
\bq\label{unibound:intro}
\| f^\eps\|_{L^\infty_t W^{1, \infty}_x}\le C,\quad \| \tt^\eps\|_{L^\infty_t W^{1, 2+\delta_*}_x}\le C.
\eq

Next, we discuss the vanishing viscosity limit $\eps\to 0$. Fix $T>0$. From the uniform bounds \eqref{unibound:intro} and the Aubin-Lions lemma, we deduce, upon extracting subsequences, that  $f^\eps \to f$ in $C(\T^2\times [0, T])$, $f^\eps \overset{\ast}{\rightharpoonup} f$ in $L^\infty_t W^{1, \infty}_x$, and $\tt^\eps \overset{\ast}{\rightharpoonup} \tt$ in $L^\infty_t W^{1, 2+\delta_*}_x$. It can be shown that $w_n\overset{\ast}{\rightharpoonup} w=\partial_2\theta T_1-\partial_1\theta T_2$ in  $L^\infty_t L^{2+\delta_*}_x$. The main difficulty is to pass to the limit in the integrals in \eqref{DN:int:intro} and \eqref{def:tt:intro}. Although the strong convergence of $f^\eps$ implies the pointwise convergence of $\na_{x, z}\Gamma(x-x', f^\eps(x)-f^\eps(x'))$, this term is multiplied by $w^\eps$ (or $\tt^\eps)$ and the nonlinear term $N=(-\na_xf^\eps, 1)$ and hence the weak convergences of $w$ and $\tt$ are not enough to conclude. For the 2D problem, by taking advantage of the explicit formula for $\Gamma$, we showed in \cite{DGN} that
\[
(G(f)f)(x)=\frac{1}{4\pi}p.v.\p_x\int_{\T}\ln\Big(\cosh(f(x)-f(x'))-\cos(x-x')\Big)\p_x\tt(x')dx',
\]
so that upon integrating by parts against a test function, the strong convergence of $f$ and the weak convergence of $\p_x\tt$ are enough to pass to the limit. For the 3D problem, although $\Gamma$ is implicit, in Propositions  \ref{prop:fundamentalsol}  and \ref{prop:GwN} we discover a remarkable structure of the products
\[
\na_{x, z}\Gamma(x- x', f(x)-f(x'))\cdot N(x')\quad\text{and}\quad \nabla_{x,z} \Gamma(x-x',f(x)-f(x'))\wedge N(x),
\]
which eventually allows us to take the limit in the integrals. We conclude that $f$ satisfies the Muskat equation \eqref{Muskat:intro} in $L^\infty_t L^{2+\delta_*}_x$. The vanishing viscosity method also allows us to prove  that $f$ is a viscosity solutions (see Definition \ref{def:viscosity}).

In order to obtain the uniqueness of viscosity solutions, we prove that they obey the comparison principle. By the sup and inf-convolution technique, this can be deduced from the consistency of viscosity solutions, that is, if a viscosity solution $f$ is $C^{1, 1}$ at a point $(x_0, t_0)$ then $f$ classically satisfies \eqref{Muskat:intro} at the same point. Let $\psi(x, t)$ be a paraboloid tangent to $f$ from above at $X_0=(x_0, t_0)$. We construct a sequence of smooth functions $\psi_r(x,t)$ lying between $f$ and $\psi$ and converging to $f$ uniformly as $r\to 0^+$; moreover, $\psi_r$ is uniformly bounded in $W^{1, \infty}_{x, t}$. Consequently, the $\psi_r(t_0)$'s are uniformly $C^{1, 1}$ at $x_0$.  Since $\psi_r$ is a legitimate test function for the viscosity subsolution $f$, we have
\[
\p_t f(X_0)=\p_t \psi_r(X_0)\le -\ka G(\psi_r)\psi_r(X_0).
\]
It then suffices to prove that $G(\psi_r)\psi_r(X_0)\to G(f)f(X_0)$. To this end, we let $\phi_r$ (resp. $\phi$) be the harmonic extension of $\psi_r(t_0)$ to the lower half space $\{y<\psi_r(x, t_0)\}$ (resp. $\{y<f(x, t_0)\}$) and consider a common interior ball $B\subset \Rr^3$ tangent to the graphs of $f(t_0)$ and $\psi_r(t_0)$ at $x_0$. Then the Dirichlet-Neumann operators $G(\psi_r)\psi_r(X_0)$ and $G(f)f(X_0)$ are the same as the corresponding Dirichlet-Neumann operators for the ball $B$ with Dirichlet data $\phi_r\vert_{\p B}$ and $\phi\vert_{\p B}$ respectively. We prove an explicit integral formula for the Dirichlet-Neumann operator for $B$ (see Appendix \ref{appendix:sphere}), which makes sense at a point when the boundary data is $C^{1, \alpha}$ at the same point. Therefore, to obtain the convergence $G(\psi_r)\psi_r(X_0)\to G(f)f(X_0)$, we need the uniform  $C^{1, \alpha}$ regularity of the harmonic extension $\phi_r$ at $(x_0, \psi_r(x_0))$ provided  $\psi_r(t_0)$ is uniformly $C^{1, 1}$ at $x_0$. This is a {\it pointwise $C^{1, \alpha}$ elliptic regularity} statement. In \cite{DGN} we proved such a regularity result for 2D domains and in the present paper we extend it to 3D domains by using Jersion-Kenig's $W^{1, 3+\eps}$ estimates for harmonic functions in 3D Lipschitz domains. In fact, we show  that it suffices to assume that the Dirichlet data is pointwise $C^{1, \beta}$ instead of pointwise $C^{1, 1}$. This result is of independent interest.


\subsection{Organization of the paper} In Section \ref{section:reformulation}, we reformulate the one-phase Muskat problem in terms of the Drichlet-Neumann operator and utilize layer potential theory for Lipschitz domains to represent the Dirichlet-Neumann operator; necessary boundedness and contraction estimates for the Dirichlet-Neumann operator in subcritical Sobolev spaces are recalled.  In Section \ref{section:viscosity}, we prove the pointwise $C^{1, \alpha}$ elliptic regularity for 3D domains and apply it to establish the consistency and comparison principle for viscosity solutions to the Muskat problem; the proof of the consistency result makes use of an integral formula for the Dirichlet-Neumann operator for a 3D ball, whose proof is given in Appendix \ref{appendix:sphere}. Sections \ref{section:H2est} and \ref{section:H3} are devoted to Sobolev  a priori estimates and hence global regularity  for the regularized Muskat problem \eqref{eq:f:eps:intro}. After that, we prove the main Theorem \ref{theo:main} in Section \ref{section:proof}; the passage from approximate solutions to the solution relies on delicate structures of layer potentials for Lipschitz domains in $\T^2\times \Rr$, which are proven in Appendix \ref{appendix:fundamentalsol}. Finally, since our fluid domain $\Omega_f$ is unbounded, for the sake of completeness we prove the invertibility of boundary layer potentials in Appendix \ref{appendix:inverses}. 
 \section{Reformulations}\label{section:reformulation}
\begin{nota}
For $f:\T^d\to \Rr$ we denote
\begin{align}
&\Omega_f=\{(x, z): x\in \T^d,~z<f(x)\},\quad \Sigma_f=\{(x, f(x)): x\in \T^d\},\\
& N(x)=(-\na_xf(x), 1),\quad n(x)=\frac{N(x)}{|N(x)|}.
\end{align}
\end{nota}
\begin{defi}
For $f:\T^d\to \Rr$, the Dirichlet-Neumann operator $G(f)$ is defined by
\bq\label{def:Gh}
\big(G(f)g\big)(x)=\p_N\phi(x, f(x)):=\lim_{h\to 0^-}\frac{1}{h}\big[\phi\big((x, f(x))+hN(x)\big)-\phi(x, f(x))\big],
\eq
where $\phi(x, y)$ solves the elliptic problem
\bq\label{elliptic:G}
\begin{cases}
\Delta_{x, z}\phi=0\quad\text{in}~\Omega_f,\\
\phi(x, f(x))=g(x),\quad \nabla_{x, z}\phi\in L^2(\Omega_f).
\end{cases}
\eq
\end{defi}
When $f$ and $g$ are time-dependent, we  write
\bq\label{nota:DNt}
\big(G(f)g\big)(x, t)\equiv \big(G\big(f(t)\big)g(t)\big)(x).
\eq
The one-phase Muskat problem has a compact reformulation in terms of the Dirichlet-Neumann operator.
\begin{prop}[\protect{\cite[Proposition 2.1]{NguyenPausader2019}}]\label{prop:reformDN}
If the fluid domain is given by $\Omega_f$ for some $f(x, t):\Rr^2\times (0, T)\to \Rr$, then $f$ satisfies
\bq\label{Muskat:DN}
\p_tf=-\ka G(f)f\quad\text{on } (0, T),
\eq
where $\ka={\rho}/{\mu}$.
\end{prop}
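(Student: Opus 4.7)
The plan is to introduce a velocity potential for the Darcy flow and then identify the normal velocity of the free surface with the Dirichlet-Neumann operator applied to $-\ka f$, so that \eqref{Muskat:DN} becomes an immediate consequence of the definition \eqref{def:Gh} together with the kinematic boundary condition. First I would rewrite Darcy's law $\mu u = -\na_{x,z}p - \rho\vec{e}_z$ so that the velocity appears as a gradient: setting $\phi(x,z,t) := -\mu^{-1}\bigl(p(x,z,t) + \rho z\bigr)$, one has $u = \na_{x,z}\phi$ in $\Omega_{f(t)}$, and incompressibility $\na_{x,z}\cdot u = 0$ becomes $\Delta_{x,z}\phi = 0$ in the same domain, so $\phi(\cdot,t)$ is harmonic in the moving fluid region.

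Next I would impose the one-phase boundary condition, namely that the fluid pressure on the free surface $\Sigma_{f(t)}$ equals the atmospheric pressure, which we normalize to zero. Evaluating the definition of $\phi$ along $\Sigma_{f(t)}$ then gives the trace $\phi(x, f(x,t), t) = -\ka f(x, t)$ with $\ka = \rho/\mu$. Combined with the natural finite-energy assumption $\na_{x,z}\phi(\cdot, t)\in L^2(\Omega_{f(t)})$, this identifies $\phi(\cdot, t)$ as the unique solution of the Dirichlet problem \eqref{elliptic:G} with boundary datum $g = -\ka f(\cdot, t)$.

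The final step is the kinematic boundary condition. Since $\{z = f(x,t)\}$ is a material surface, differentiating the equation $z - f(x,t) = 0$ along a fluid trajectory yields $\p_t f = u\cdot N$ on $\Sigma_{f(t)}$, where $N = (-\na_x f, 1)$. Using $u = \na_{x,z}\phi$, the right-hand side equals $N\cdot \na_{x,z}\phi$, which is precisely $\p_N\phi$ in the sense of \eqref{def:Gh}. The linearity of $G(f)$ in its boundary datum then gives $\p_N\phi = G(f)(-\ka f) = -\ka G(f) f$, which is \eqref{Muskat:DN}.

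The only point that requires genuine care is the introduction of $\phi$: in the periodic setting $\T^2\times\Rr$ one must verify that the irrotational velocity $u$ admits a \emph{single-valued} primitive, equivalently that the circulations of $u$ along the two generating loops of $\T^2$ vanish. Here this is automatic because $u$ is already written as $-\mu^{-1}\na_{x,z}(p+\rho z)$ for $p$ a well-defined periodic function on $\Omega_{f(t)}$, so no multi-valuedness can arise. Once $\phi$ is seen to belong to the class stipulated in \eqref{elliptic:G}, the remainder of the argument is purely algebraic.
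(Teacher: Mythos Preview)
Your argument is correct and is precisely the standard derivation: introduce the velocity potential $\phi=-\mu^{-1}(p+\rho z)$, verify it is harmonic with Dirichlet trace $-\ka f$, and combine with the kinematic boundary condition. The paper does not give its own proof of this proposition; it simply cites \cite[Proposition 2.1]{NguyenPausader2019}, where the same computation is carried out.
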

In order to establish fine properties of the Drichlet-Neumann operator for Lipschitz domains, we shall need a reformulation via layer potential theory.  Assume that $f\in W^{1, \infty}(\T^2)$ and $g\in H^1(\T^2)$. By potential theory for Lipschitz domains \cite{Vorcheta}, the solution $\phi$ of \eqref{elliptic:G} is given by
\bq\label{def:potental}
\phi(x, z)=\cK \tt,\quad \tt=(\mez I+K)^{-1}g,
\eq
where $\cK=\cK[f]$ is the double layer potential
\bq\label{def:cK}
\cK h(x, z)=\int_{\T^2} \na_{x, z}\Gamma(x- x', z-f(x'))\cdot N(x') h(x')dx',\quad z\ne f(x),
\eq
and $K=K[f]$ is the boundary layer potential
\bq\label{def:K}
K h(x)=p.v.\int_{\T^2} \na_{x, z}\Gamma(x- x', f(x)-f(x'))\cdot N(x') h(x')dx'.
\eq
We have denoted by $\Gamma$  the fundamental solution of the Laplace equation on $\T^2\times \Rr$, which is not explicit  unlike in $\T\times \Rr$ (see \cite{DGN}). Nevertheless, $\Gamma$  can be represented as in \eqref{eq2.25} in Appendix \ref{appendix:fundamentalsol}. Now \eqref {def:potental} means that $\tt$ can be determined by solving the equation
\bq\label{def:tt}
\mez \tt(x)+p.v.\int_{\T^2} \na_{x, z}\Gamma(x- x', f(x)-f(x'))\cdot N(x') \tt(x')dx'=g(x).
\eq
 We note that in \eqref{def:potental}, $\phi$ is defined on $(\T^2_x\times \Rr)\setminus \Sigma$, while
\bq\label{limit:tt}
\lim_{(x, z)\xrightarrow{\text{i}} (x_0, f(x_0))}\phi(x, z)=(\mez I+K)\tt(x_0),\quad \lim_{(x, z)\xrightarrow{\text{e}} (x_0, f(x_0))}\phi(x, z)=-(\mez I-K)\tt(x_0)
\eq
for almost every  $x_0\in \T^2$ (see Theorem 1.10 in \cite{Vorcheta}). The letter $\text{i}$ (resp. $\text{e}$) in the above limit means that the limit is taken in interior (resp. exterior) cones of $\Omega_f$ with vertex $(x_0, f(x_0))$.

For completeness, we also introduce the single layer potential  
\bq\label{singlelayer}
\cS h(x, z)=\int_{\T^2} \Gamma(x- x', z-f(x')) h(x')dx',\quad (x,z)\in \T^2\times \Rr,
\eq
and the boundary single layer potential 
\bq\label{singlelayer2}
S h(x)=\int_{\T^2} \Gamma(x- x', f(x)-f(x')) h(x')dx',\quad x\in \T^2.
\eq
Now, we set $v(x, z)=\na_{x, z}\phi(x, z)$ for $z\ne f(x)$ and
\bq\label{def:vjump}
w(x)=[v(x, f(x))-v^c(x, f(x)]\wedge N(x),
\eq
where we have used the notation
\bq\label{nota:vc}
v(x, f(x))=\lim_{\eps \to 0^+} v((x, f(x))-\eps N(x)),\quad v^c(x, f(x))=\lim_{\eps \to 0^+} v((x, f(x))+\eps N(x)).
\eq
Since $\p_N\phi=\p_N\cK\tt$ is continuous across $\Sigma$ (see (7.43) in \cite{MT}), we deduce that $\na \cdot v=0$. Next, the vorticity  $\om=\na\wedge v$ can be written in terms of $w$ as
\bq\label{om:w}
\om(x, z)=w(x)\delta(z-f(x)).
\eq
Since $\na \cdot v=0$,  the identity $\na\wedge (\na \wedge v)=\na (\na \cdot v)-\Delta v$ implies $-\Delta v=\na \wedge \om$. Then invoking \eqref{om:w} yields
\bq
\begin{aligned}
v(x, z)&= -\na\wedge\Delta^{-1}\om\\
&=-\na \wedge\int_{\T^2\times \Rr} \Gamma(x-x', z-z')\om(x', z')dx'dz'\\
&=-\na \wedge\int_{\T^2} \Gamma(x-x', z-f(x'))w(x')dx'\\
&=-\int_{\T^2} \na_{x, z}\Gamma(x-x', z-f(x'))\wedge w(x')dx'.
\end{aligned}
\eq
Taking the limit $(x, z)\xrightarrow{\text{i}} (x, f(x))$  we obtain the following.
\begin{prop} For almost every $x\in \T^2$, we have
\bq\label{uboundary}
\begin{aligned}
v(x, f(x))&=\lim_{(x, z)\xrightarrow{\text{i}} (x, f(x))}v(x, z)\\
&=-p.v.\int_{\T^2}\nabla_{x,z} \Gamma(x-x',f(x)-f(x'))\wedge w(x')dx'+\frac12\frac{N(x)\wedge w(x)}{|N(x)|^2}.
\end{aligned}
\eq
\end{prop}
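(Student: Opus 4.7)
The strategy is to recognize $v$ as (minus) the curl of a vector-valued single-layer potential and then apply the classical single-layer jump relation, valid on the Lipschitz graph $\Sigma_f$ by Verchota's theory \cite{Vorcheta}. Concretely, parametrize $\Sigma_f$ by $x'\mapsto Q=(x',f(x'))$, observe $d\sigma(Q)=|N(x')|\,dx'$, and set $\tilde w(Q):=w(x')/|N(x')|$ (a tangential density, since the definition $w=\p_2\tt\, T_1-\p_1\tt\, T_2$ gives $w\cdot N = 0$). The given off-surface formula rewrites as
\[
v(x,z) = -\nabla_{x,z}\wp\, U(x,z), \qquad U(P) := \int_{\Sigma_f}\Gamma(P-Q)\,\tilde w(Q)\,d\sigma(Q),
\]
so $v=-\nabla\wp U$, where $U$ is the vector single layer on $\Sigma_f$ with density $\tilde w$.

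Next I would invoke the non-tangential jump relation for the gradient of a single-layer potential. For $L^p$ densities on a Lipschitz graph with upward unit normal $n=N/|N|$, Verchota's theorem supplies the a.e.\ existence of the interior non-tangential limit together with
\[
\nabla U^-(x,f(x)) = \text{p.v.}\!\int_{\Sigma_f}\!\nabla_{x,z}\Gamma\bigl((x,f(x))-Q\bigr)\otimes\tilde w(Q)\,d\sigma(Q) \;-\; \tfrac12\, n(x)\otimes\tilde w(x).
\]
Contracting against $\varepsilon_{ijk}$ turns $\otimes$ into $\wp$ and yields the interior limit of $\nabla\wp U$. Multiplying by $-1$, and reverting to the $x'$ variable via $\tilde w\,d\sigma = w\,dx'$ and $n\wp\tilde w = (N\wp w)/|N|^2$, recovers exactly the formula \eqref{uboundary}.

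The main obstacle is that $\Sigma_f$ is merely Lipschitz, so neither the a.e.\ non-tangential limit nor the jump relation is routine; both rely on Verchota's deep $L^p$ layer potential theory \cite{Vorcheta}, the same result already invoked for the double layer in \eqref{limit:tt}. The local Calder\'on--Zygmund structure needed to apply that theory to our kernel $\Gamma$ is furnished by the decomposition \eqref{Gamma:intro} from Appendix \ref{appendix:fundamentalsol}: the leading $-1/(4\pi\sqrt{|x|^2+z^2})$ piece is exactly the $\Rr^3$ Newton kernel near the singularity, to which Verchota's theory applies verbatim, while $\eta_2+\eta_3|z|$ is smooth across $\Sigma_f$ and contributes no jump. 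As a sanity check, the resulting identity for $v^-$ together with the exterior analogue gives $(v^--v^+)\wp N = (n\wp\tilde w)\wp N = w$ by the BAC--CAB identity and $w\cdot N=0$, consistent with the definition \eqref{def:vjump}.
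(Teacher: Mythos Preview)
Your argument is correct. Both your route and the paper's rest on Verchota's layer-potential theory for Lipschitz graphs, but they invoke different pieces of it. You write $v=-\na\wp U$ with $U$ a vector-valued single layer, apply the standard interior jump $\na\mathcal{S}h^-=\text{p.v.}\,\na\mathcal{S}h-\tfrac12\,n\,h$ componentwise, and contract with the Levi--Civita symbol to produce the $\tfrac12\,N\wp w/|N|^2$ term directly. The paper instead decomposes $\na_{x,z}\Gamma$ at each integration point into its component along $N(x')$ and a tangential remainder $A$: the normal component yields a (vector-valued) double-layer potential with density $N\wp w/|N|^2$, whose interior limit $(\tfrac12 I+K)(N\wp w/|N|^2)$ supplies the jump, while the tangential piece $A$ has no jump by \cite[Lemma~1.5]{Vorcheta} and contributes only its p.v. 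Your approach is a bit more streamlined, avoiding the normal/tangential splitting; the paper's approach makes the provenance of the jump term transparent as a double-layer limit, which dovetails with the earlier use of \eqref{limit:tt}.
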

\begin{proof}
We decompose $\nabla_{x,z} \Gamma(x-x', z-f(x'))$ into the normal and tangential parts:
 \[
 \nabla_{x,z} \Gamma(x-x', z-f(x'))=[\nabla_{x,z} \Gamma(x-x', z-f(x'))\cdot N(x')]\frac{N(x')}{|N(x')|^2}+A(x, z, x').
 \]
 We note that the normal part leads to the double layer potential
 \[
-p.v.\int_{\T^2}[\nabla_{x,z} \Gamma(x-x', z-f(x'))\cdot N(x')]\frac{N(x')\wedge w(x')}{|N(x')|^2}dx',
 \]
 which converges to $(\mez I+K)\frac{N\wedge w}{|N|^2}$  when $(x, z)\xrightarrow{\text{i}} (x, f(x))$ for almost every $x$. As for the tangential parts, applying Lemma 1.5 in \cite{Vorcheta}, we find
 \[
 \lim_{(x, z)\xrightarrow{\text{i}} (x, f(x))}-\int_{\T^2}A(x, z, x')\wedge w(x')dx'= -p.v.\int_{\T^2}A(x, f(x), x')\wedge w(x')dx'
 \]
 for almost every $x$. Summing the above two limits yields \eqref{uboundary}.
\end{proof}
Using \eqref{uboundary}, we rewrite  the Dirichlet-Neumann operator  as
\bq
G(f)g=v(x, f(x))\cdot N(x)=-p.v. \int_{\T^2}\left\{\nabla_{x,z} \Gamma(x-x',f(x)-f(x'))\wedge w(x')\right\}\cdot N(x)dx'.
\eq
Next we express $w$ in terms of $\tt$. We let
\bq
T_1(x)=(1,0,\partial_1f(x)),\quad T_2(x)= (0,1,\partial_2f(x))
\eq
be tangent vectors to $\{y=f(x)\}$. It follows from \eqref{limit:tt} that $\theta(x)=\phi(x,f(x))-\phi^c(x,f(x))$ (see the notation \eqref{nota:vc}), hence
\bq\label{p1tt}
\begin{aligned}
\partial_{1}\theta(x)&=(\nabla_{x,z}\phi(x,f(x))-\nabla_{x,z}\phi^c(x,f(x)))\cdot T_1(x)=(v(x, f(x))-v^c(x,f(x)))\cdot T_1(x)
\end{aligned}
\eq
and
\bq\label{p2tt}
\partial_2\theta(x)
=(v(x, f(x))-v^c(x, f(x))\cdot T_2(x).
\eq
Now in \eqref{def:vjump} we write $N=T_1\wedge T_2$  and use the vector identity $a\wedge(b\wedge c)=b(a\cdot c)-c(a\cdot b)$. Then invoking \eqref{p1tt} and \eqref{p2tt}, we can express $w$ in terms of $\tt$ as
\bq\label{weqftheta:0}
w(x)=\partial_2\theta(x)T_1(x)-
\partial_{1}\theta(x)T_2(x).
\eq
Taking derivatives of equation $\phi(x, f(x))=g(x)$ with respect to $x_1$ and $x_2$, we deduce
\[
\na\phi(x, f(x))\cdot T_1(x)=-\p_1g(x),\quad \na\phi(x, f(x))\cdot T_2(x)=-\p_2g(x).
\]
Then using \eqref{uboundary} for $v(x, f(x))=\na \phi(x, f(x))$ and using \eqref{weqftheta:0} for $w$, we find
\bq\label{d1tt}
\frac12\partial_1\theta(x)-p.v.\int_{\T^2}\nabla_{x,z} \Gamma(x-x',f(x)-f(x'))\wedge w(x')dx'\cdot T_1(x)=-\partial_1g(x)
\eq
and
\bq\label{d2tt}
\frac12\partial_2\theta(x)-p.v.\int_{\T^2}\nabla_{x,z} \Gamma(x-x',f(x)-f(x'))\wedge w(x')dx'\cdot T_2(x)=-\partial_2g(x).
\eq
We have proved the following.
\begin{prop}\label{prop:reformDN_b}
If $f\in W^{1, \infty}(\T^2)$ and $g\in H^1(\T^2)$, then for a.e. $x\in \T^2$, $(G(f)g)(x)$ is given by
\bq\label{DN:int}
(G(f)g)(x)=-p.v. \int_{\T^2}\left\{\nabla_{x,z} \Gamma(x-x',f(x)-f(x'))\wedge w(x')\right\}\cdot N(x)dx',
\eq
where
\bq\label{weqftheta}
w(x)=\partial_2\theta(x)T_1(x)-
\partial_{1}\theta(x)T_2(x)\quad\text{and}\quad \tt=(\mez I+K)^{-1}g.
\eq
Moreover,   $w$ and $\tt$  satisfy the relations \eqref{d1tt} and \eqref{d2tt}.
\end{prop}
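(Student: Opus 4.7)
The plan is to combine the double-layer potential representation of $\phi$ with a Biot--Savart analysis of $v := \nabla_{x,z}\phi$, then take boundary traces and use tangential differentiation of the jump $\theta$.

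First, by Verchota's theory for Lipschitz domains \cite{Vorcheta}, the operator $\tfrac12 I + K[f]$ is invertible on $H^1(\mathbb{T}^2)$, so $\theta := (\tfrac12 I + K)^{-1}g \in H^1$ is well defined and $\phi := \mathcal{K}[f]\theta$ solves \eqref{elliptic:G}. Since $\partial_N\phi$ is continuous across $\Sigma_f$, the normal component of $v$ does not jump, and with $w$ defined by \eqref{def:vjump} one has, in the distributional sense, $\nabla_{x,z}\cdot v = 0$ and $\nabla_{x,z}\wedge v = w(x)\delta(z - f(x))$. Using $\nabla\wedge(\nabla\wedge v) = -\Delta v$ and inverting against the fundamental solution $\Gamma$ (moving the curl onto $\Gamma$) yields the Biot--Savart formula
\[
v(x,z) = -\int_{\mathbb{T}^2} \nabla_{x,z}\Gamma(x - x', z - f(x'))\wedge w(x')\,dx', \qquad z \ne f(x).
\]

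Second, I would take the interior nontangential limit $(x,z) \xrightarrow{\text{i}} (x, f(x))$ after decomposing the kernel as
\[
\nabla_{x,z}\Gamma = \bigl(\nabla_{x,z}\Gamma \cdot N(x')\bigr)\frac{N(x')}{|N(x')|^2} + A(x,z,x'), \qquad A \cdot N(x') = 0.
\]
The normal piece is a double-layer potential with density $\frac{N(x')\wedge w(x')}{|N(x')|^2}$; its jump, by Verchota, contributes $\tfrac12\frac{N(x)\wedge w(x)}{|N(x)|^2}$, while the tangential remainder is weakly singular enough that Lemma 1.5 of \cite{Vorcheta} supplies a principal value limit. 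This produces \eqref{uboundary}. Dotting with $N(x)$ and noting $(N\wedge w)\cdot N = 0$ cancels the jump term and yields \eqref{DN:int} for $G(f)g$.

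Third, to recover \eqref{weqftheta}, I combine \eqref{limit:tt} (which gives $\theta = \phi - \phi^c$ on $\Sigma_f$, and hence $\partial_j\theta = (v - v^c)\cdot T_j$ after tangential differentiation) with the triple product identity $a\wedge(b\wedge c) = b(a\cdot c) - c(a\cdot b)$ applied to $(v - v^c)\wedge(T_1\wedge T_2)$, obtaining $w = \partial_2\theta\, T_1 - \partial_1\theta\, T_2$. Finally, differentiating the Dirichlet condition $\phi(x, f(x)) = g(x)$ in $x_j$ gives $v(x, f(x))\cdot T_j$ in terms of $\partial_j g$; substituting \eqref{uboundary} for $v(x, f(x))$ and evaluating $(N\wedge w)\cdot T_j/|N|^2 = \pm\partial_j\theta$ via the triple product produces \eqref{d1tt}--\eqref{d2tt}. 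The main obstacle is the rigorous justification of the jump relations and principal value limits at merely Lipschitz regularity, which is furnished by Verchota's nontangential maximal function machinery; once those tools are invoked, the remainder is vector calculus on the tangent-normal frame $\{T_1, T_2, N\}$.
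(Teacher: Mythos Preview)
Your proposal is correct and follows essentially the same route as the paper: the Biot--Savart representation of $v=\nabla_{x,z}\phi$ via the double-layer potential, the normal/tangential decomposition of the kernel with Verchota's jump relations (including Lemma~1.5 of \cite{Vorcheta} for the tangential piece) to obtain \eqref{uboundary}, then dotting with $N$ for \eqref{DN:int}, and the tangential differentiation of $\theta=\phi-\phi^c$ and of the Dirichlet condition to recover \eqref{weqftheta} and \eqref{d1tt}--\eqref{d2tt}. The steps, the key lemmas invoked, and the vector-calculus identities all match the paper's argument.
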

A key technical element in various places in our proof of Theorem \ref{theo:main} is the fact that for $f\in W^{1, \infty}(\T^2)$, $\tt=(\mez I+K[f])^{-1}f$ belongs to $W^{1, p}$ for some $p>2$. This is a consequence of the optimal $W^{1, 2+\eps}$  regularity for layer potentials in bounded Lipschitz domains, a celebrated result in \cite{DK}. In our infinite-depth  setting,  although the boundary of  $\Omega_f$ is bounded Lipschitz, $\Omega_f$ is unbounded. Moreover, we will also need a quantitative bound for the inverse $(\mez I+K[f])^{-1}$. Therefore, we will present a proof of the following result in Theorem \ref{theo:Kinv}, Appendix \ref{appendix:inverses}. 
\begin{prop}\label{prop:W1p}
Let $f\in W^{1, \infty}(\T^2)$ and  $m$ be such that
\bq\label{Lipschitzf}
|f(x)-f(y)|\le m|x-y|\quad\forall x,~y\in \T^2.
\eq
There exists $\eps_*>0$ depending only on $m$ such that for any $p\in [2, 2+\eps_*)$,
\bq\label{invert:tt}
(\mez I+K): W^{1, p}(\T^2)\to W^{1, p}(\T^2)
\eq
is a bounded operator with a bounded inverse, where the operator norms depend only on $m$.
\end{prop}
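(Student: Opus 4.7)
The plan is to obtain Proposition \ref{prop:W1p} from Verchota's $L^2$ theory combined with Dahlberg--Kenig's $W^{1,2+\eps}$ regularity theory for boundary layer potentials on Lipschitz domains. The key point is that all constants in these classical results depend only on the Lipschitz character of the underlying domain, which for $\Omega_f$ is controlled entirely by $m$.

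First I would identify $\mez I + K[f]$ on $W^{1,p}(\T^2)$ with the classical boundary double-layer operator on the Lipschitz surface $\Sigma_f = \partial \Omega_f$. Via the parametrization $x \mapsto (x, f(x))$, the surface measure $|N(x)|\,dx$ and the unit outward normal $N(x)/|N(x)|$ are both bounded above and below by constants depending only on $m$, so $W^{1,p}(\T^2)$ and $W^{1,p}(\Sigma_f)$ (defined via tangential derivatives) are equivalent with constants depending only on $m$, and the Jacobian makes $K$ agree with the classical double-layer kernel. Thus the proposition reduces to the invertibility of the classical $\mez I + K$ on $W^{1,p}(\Sigma_f)$ for $p \in (1, 2+\eps_*)$.

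Next I would invoke the Verchota--Dahlberg--Kenig machinery: Verchota's Rellich identity yields $L^2$-invertibility of $\mez I + K$ with norms controlled by the Lipschitz character; the Dahlberg--Kenig regularity theorem then upgrades this to bounded invertibility on $W^{1,p}(\partial D)$ for $p \in (1, 2+\eps_*(M))$, where $M$ is the Lipschitz character of $D$. Applying this with $D = \Omega_f$ and tracking the dependencies yields $\eps_*$ and operator norms depending only on $m$, as claimed.

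The main obstacle is that these theorems are stated for bounded Lipschitz domains in Euclidean space, whereas $\Omega_f \subset \T^2 \times \Rr$ is unbounded in the $z$-direction and the ambient space is a cylinder. I would circumvent this via the decomposition \eqref{Gamma:intro} of the periodic fundamental solution proved in Appendix \ref{appendix:fundamentalsol}: the singular piece $-\eta_1/(4\pi\sqrt{|x|^2+z^2})$ matches the Euclidean fundamental solution near the diagonal, so Verchota's Rellich identity and the Calder\'on--Zygmund estimates underlying Dahlberg--Kenig apply verbatim to $\Sigma_f$, while the smooth remainders $\eta_2 + \eta_3|z|$ contribute only compact, smoothing operators on $W^{1,p}$ with norms controlled by $m$. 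A Fredholm argument combined with the transferred $L^2$-invertibility (the Rellich calculation being purely local) then gives the full $W^{1,p}$ invertibility in the periodic setting with the desired uniform constants.
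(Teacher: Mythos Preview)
Your proposal is broadly correct and invokes the right ingredients (Verchota's $L^2$ theory plus Dahlberg--Kenig's $L^p$ extension), but the paper's proof takes a different, more algebraic route that is worth comparing.

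Rather than a Fredholm/compact-perturbation argument, the paper uses Verchota's intertwining identity
\[
\mez I + K \;=\; S\big(\mez I + K^*\big)S^{-1}
\]
between $H^1$ and $L^2$ (page 590 of \cite{Vorcheta}), and then upgrades each factor separately: \cite[Theorem 3.9]{DK} gives that $S: L^p(\T^2)\to W^{1,p}(\T^2)$ is invertible for $p\in(1,2+\eps_*)$ with $\eps_*=\eps_*(m)$, and the same theorem applied to the exterior normal derivative $\p_N^c S = \mez I + K^*$ gives invertibility of $\mez I + K^*$ on $L^p$. The identity then immediately yields $(\mez I + K)^{-1} = S(\mez I + K^*)^{-1}S^{-1}$ on $W^{1,p}$, and the norm of the inverse is a product of three explicit operator norms, each depending only on $m$.

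This is cleaner than your proposed route on two counts. First, your sentence ``the Dahlberg--Kenig regularity theorem then upgrades this to bounded invertibility on $W^{1,p}(\partial D)$'' is exactly where the intertwining identity is doing the work: \cite{DK} proves $L^p$-invertibility of $S$ and of $\pm\mez I + K^*$, not $W^{1,p}$-invertibility of $\mez I + K$ directly, so you are implicitly using the same identity without naming it. Second, extracting \emph{quantitative} bounds (depending only on $m$) from a Fredholm argument is genuinely delicate---compactness of the remainder gives index zero and hence invertibility, but not an explicit bound on the inverse---whereas the intertwining identity gives the bound as a product of three norms that are each controlled by $m$ via \cite{DK} and \cite{Vorcheta}. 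Your decomposition of $\Gamma$ into the Euclidean singularity plus smooth remainders is correct and is indeed how one justifies that the \cite{Vorcheta}/\cite{DK} machinery transfers to $\T^2\times\Rr$ (the paper cites \cite{MT} for this manifold setting), but for the quantitative statement the intertwining route is the sharper tool.
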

Next, we deduce the boundedness of $G(f)$ from $W^{1, p}(\T^2)$ to $L^p(\T^2)$ for the range of $p$ given in Proposition \ref{prop:W1p}.
\begin{prop}\label{prop:DNLp} Let $f\in W^{1, \infty}(\T^2)$ satisfy \eqref{Lipschitzf} and let $\eps_*=\eps_*(m)$ be as in Proposition \ref{prop:W1p}. For any $p\in (1, 2+\eps_*)$ there exists $C:\Rr_+\to \Rr_+$  such that for all $g\in W^{1, p}(\T^2)$ we have
\bq\label{DN:Lp}
\| G(f)g\|_{L^p(\T^2)}\le C(\| f\|_{W^{1, \infty}(\T^2)})\| g\|_{W^{1, p}(\T^2)}.
\eq
\end{prop}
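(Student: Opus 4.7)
The plan is to combine the integral representation \eqref{DN:int}--\eqref{weqftheta} from Proposition \ref{prop:reformDN_b} with the $W^{1,p}$ invertibility of $\mez I+K$ from Proposition \ref{prop:W1p} and classical Calder\'on-Zygmund bounds on the Lipschitz graph $\Sigma_f$. Concretely, I would write $G(f)g=Tw$, where $T$ is a vector-valued singular integral operator acting on $w$ and $w=\p_2\tt\,T_1-\p_1\tt\,T_2$ with $\tt=(\mez I+K)^{-1}g$; the proof then reduces to (i) bounding $w$ in $L^p$ by $g$ in $W^{1,p}$ and (ii) bounding $T$ on $L^p$.

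For step (i), Proposition \ref{prop:W1p} gives
\[
\|\tt\|_{W^{1,p}(\T^2)}\le C(m)\,\|g\|_{W^{1,p}(\T^2)}
\]
for every $p\in(1,2+\eps_*)$, and the identity $w=\p_2\tt\,T_1-\p_1\tt\,T_2$ together with $|T_j|\le 1+\|\na f\|_{L^\infty}$ then yields
\[
\|w\|_{L^p(\T^2)}\le C(\|f\|_{W^{1,\infty}})\,\|g\|_{W^{1,p}(\T^2)}.
\]

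For step (ii), the operator is
\[
Tw(x):=-\,p.v.\int_{\T^2}\{\na_{x,z}\Gamma(x-x',f(x)-f(x'))\wedge w(x')\}\cdot N(x)\,dx'.
\]
Using the triple-product identity $(a\wedge b)\cdot c=-b\cdot(a\wedge c)$, I would rewrite
\[
Tw(x)=\int_{\T^2}[\na_{x,z}\Gamma(x-x',f(x)-f(x'))\wedge N(x)]\cdot w(x')\,dx',
\]
so componentwise the kernel consists of entries of $\na_{x,z}\Gamma(x-x',f(x)-f(x'))$ multiplied by the bounded coefficients $N_i(x)$. By Lemma \ref{lemA1}, the principal part of $\na_{x,z}\Gamma$ is the gradient of the Newtonian kernel $-1/(4\pi\sqrt{|x|^2+z^2})$; evaluated on the Lipschitz graph $\Sigma_f$ this is a standard Calder\'on-Zygmund kernel of size $|x-x'|^{-2}$ on $\T^2$, satisfying the usual size, smoothness, and cancellation conditions with constants depending only on $m$. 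The remaining contributions from $\eta_2$ and $\eta_3|z|$ produce only weakly singular or smooth operators, trivially bounded on $L^p$. The $L^p\to L^p$ boundedness of the principal part on the Lipschitz graph, for every $p\in(1,\infty)$ and with constants depending only on $m$, then follows from the Coifman--McIntosh--Meyer theory that already underlies the bounds \eqref{SKK*} for $K$ and $K^*$.

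Combining the two steps yields
\[
\|G(f)g\|_{L^p(\T^2)}=\|Tw\|_{L^p(\T^2)}\le C(m)\|w\|_{L^p(\T^2)}\le C(\|f\|_{W^{1,\infty}})\|g\|_{W^{1,p}(\T^2)},
\]
which is \eqref{DN:Lp}. The main obstacle is obtaining the quantitative dependence of the operator norms on the Lipschitz seminorm $m$ alone. This is handled by the quantitative form of Proposition \ref{prop:W1p} at the level of $\tt$ and by the quantitative CZ theory on Lipschitz graphs for the operator $T$; it is worth noting that the restriction $p<2+\eps_*$ enters only through the $W^{1,p}$ bound on $\tt$, whereas $T$ itself is bounded on $L^p$ for the full range $1<p<\infty$.
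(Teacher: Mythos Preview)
Your proposal is correct and follows essentially the same strategy as the paper: reduce to $\|w\|_{L^p}$ via Proposition~\ref{prop:W1p}, then show the singular integral acting on $w$ is bounded on $L^p$ for all $p\in(1,\infty)$. The only difference is in how that $L^p$ boundedness is obtained. You appeal directly to Calder\'on--Zygmund/Coifman--McIntosh--Meyer theory on the Lipschitz graph; the paper instead expands further using $N=T_1\wedge T_2$ and the identity $(a\wedge b)\wedge c=b(a\cdot c)-a(b\cdot c)$ to recognize the operator explicitly as
\[
(G(f)g)(x)=T_2(x)\cdot\p_{T_1}Sw(x)-T_1(x)\cdot\p_{T_2}Sw(x),
\]
i.e., a bounded multiplier times tangential derivatives of the single layer potential, and then cites Verchota's \cite[Theorem~1.6]{Vorcheta} for the $L^p$ boundedness of $\p_{T_j}S$. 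This buys a clean, citable statement with explicit dependence on $m$, whereas your version leaves the ``cancellation condition'' somewhat implicit. Both routes rest on the same underlying CMM machinery, and your observation that the restriction $p<2+\eps_*$ enters only through $\tt$ is exactly right.
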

\begin{proof}
Using the identities $N=T_1\wedge T_2$, $(a\wedge b)\cdot c=(c\wedge a)\cdot b$ and $(a\wedge b)\wedge c=b(a\cdot c)-a(b\cdot c)$, we deduce from \eqref{DN:int} that
\bq
\begin{aligned}
(G(f)g)(x)&=-p.v. \int_{\T^2}\left\{\nabla_{x,z} \Gamma(x-x',f(x)-f(x'))\wedge w(x')\right\}\cdot N(x)dx'\\
&=-p.v. \int_{\T^2}\left\{N(x)\wedge\nabla_{x,z} \Gamma(x-x',f(x)-f(x'))\right\}\cdot w(x')dx'\\
&=-p.v. \int_{\T^2}T_2(x)\left\{T_1(x)\cdot\nabla_{x,z} \Gamma(x-x',f(x)-f(x'))\right\}\cdot w(x')dx'\\
&\quad +p.v. \int_{\T^2}T_1(x)\left\{T_2(x)\cdot\nabla_{x,z} \Gamma(x-x',f(x)-f(x'))\right\}\cdot w(x')dx',
\end{aligned}
\eq
where $w(x)=\partial_2\theta(x)T_1(x)-\partial_{1}\theta(x)T_2(x)$, $\tt=(\mez I+K)^{-1}g$. Then applying \cite[Theorem 1.6]{Vorcheta}, we obtain
\bq
(G(f)g)(x)=T_2(x)\cdot \p_{T_1}\cS w(x,f(x))-T_1(x)\cdot \p_{T_2}\cS w(x,f(x)),
\eq
where $\cS$ is the single layer potential \eqref{singlelayer}. The same theorem asserts that the $\p_{T_j}\cS$'s are bounded on $L^p(\T^2)$, $p\in (1, \infty)$. But $\tt\in W^{1, p}(\T^2)$ for $p\in (1, 2+\eps_*)$ by virtue of Proposition \ref{prop:W1p}, hence $G(f)g\in L^p(\T^2)$ and \eqref{DN:Lp} holds.
\end{proof}
Finally, we recall the following   continuity and contraction estimates for the Dirichlet-Neumann operator in subcritical Sobolev spaces $H^s(\T^d)$, $s>1+\frac{d}{2}$. They will be used to propagate high Sobolev regularity for the regularized Muskat problem \eqref{Muskate}.
\begin{prop}[\protect{\cite[Proposition 2.13]{PoyferreNguyen2017}}]\label{prop:estDN}
Let $s_0>1+\frac{d}{2}$ and  $\sigma\ge \mez$. Then there exists a nondecreasing function $\cF:\Rr^+\to \Rr^+$  such that
\bq\label{est:DN}
\| G(f)g\|_{H^{\sigma-1}(\T^d)}\le \cF(\| f\|_{H^{s_0}(\T^d)})\left(\| g\|_{H^\sigma(\T^d)}+\| f\|_{H^\sigma(\T^d)}\| g\|_{H^{s_0}(\T^d)}\right)
\eq	
for all $f,\, g\in H^{\max\{s_0, \sigma\}}(\T^d)$.
\end{prop}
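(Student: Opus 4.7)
The plan is to prove \eqref{est:DN} by flattening the fluid domain and running a tame elliptic regularity argument in the straightened variables, following the paralinearization strategy of Alazard-M\'etivier and Alazard-Burq-Zuily. First I would introduce a regularizing change of variables $\rho:\T^d\times(-\infty,0]\to \overline{\Omega_f}$ of the form $\rho(x,y)=(x,y+\eta(x,y))$, where $\eta$ is a smooth extension of $f$ into the bulk (for instance $\eta(x,y)=e^{y\L{D_x}}f(x)$ near $y=0$) chosen so as to give away as little regularity on $f$ as possible. This sends $\Sigma_f$ to $\{y=0\}$ and pulls back the harmonic extension $\phi$ to a function $v=\phi\circ \rho$ on the lower half-space satisfying an elliptic equation
\[
\p_y^2 v+\alpha(x,y)\Delta_x v+\beta(x,y)\cdot \na_x\p_y v+\gamma(x,y)\p_y v=0,\qquad v|_{y=0}=g,
\]
whose coefficients $\alpha,\beta,\gamma$ are smooth nonlinear functions of $\na f$ and of $\na\eta,\p_y\eta$. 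In these variables the Dirichlet-Neumann operator takes the form $G(f)g=\mb(B-V\cdot \na_x f)|_{y=0}$, where $B=\p_y v|_{y=0}$, $V=\na_x v|_{y=0}$, and $\mb$ is a geometric factor depending algebraically on $\na f$.

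Second, I would establish tame a priori estimates for $v$ in the anisotropic space $X^\sigma:=C^0_y([-1,0];H^\sigma_x)\cap L^2_y([-1,0];H^{\sigma+\mez}_x)$ of the form
\[
\|v\|_{X^\sigma}\le \mathcal{F}(\|f\|_{H^{s_0}})\bigl(\|g\|_{H^\sigma}+\|f\|_{H^\sigma}\|g\|_{H^{s_0}}\bigr).
\]
The scheme is by induction on $\sigma$: the base case $\sigma=\mez$ comes from the variational $H^1$ bound for the harmonic extension $\phi$; the inductive step is obtained by differentiating the equation in $x$, testing with $\L{D_x}^{2(\sigma-\mez)}\p_y v$, and absorbing the commutator terms. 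The decisive inputs are Bony's paraproduct decomposition, the tame product estimate $\|uv\|_{H^\sigma}\les \|u\|_{L^\infty}\|v\|_{H^\sigma}+\|u\|_{H^\sigma}\|v\|_{L^\infty}$, and the Moser composition bound $\|F(u)\|_{H^\sigma}\le \mathcal{F}(\|u\|_{L^\infty})\|u\|_{H^\sigma}$ applied to the coefficients $\alpha,\beta,\gamma$ after invoking the Sobolev embedding $H^{s_0}\hookrightarrow W^{1,\infty}$, which is available because $s_0>1+\frac{d}{2}$.

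Finally, the trace theorem applied to $v\in X^\sigma$ delivers $B,V\in H^{\sigma-1}(\T^d)$, and substituting into $G(f)g=\mb(B-V\cdot \na_x f)|_{y=0}$ yields \eqref{est:DN}; the product $V\cdot \na_x f$ is precisely the source of the cross term $\|f\|_{H^\sigma}\|g\|_{H^{s_0}}$ and is handled once more by the tame product estimate. The main obstacle is preserving tameness throughout: the nonlinear coefficients $\alpha,\beta,\gamma$ depend on $\na f$, and a naive composition argument would produce non-tame factors like $\mathcal{F}(\|f\|_{H^\sigma})$, destroying the required linear dependence on $\|f\|_{H^\sigma}$ in \eqref{est:DN}. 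The cure is to split each coefficient paradifferentially into a low-frequency $W^{1,\infty}$ piece (controlled by $\|f\|_{H^{s_0}}$) and a high-frequency $H^\sigma$ piece, and to carry this splitting faithfully through every step of the elliptic a priori estimate. Since the proposition is cited verbatim from \cite{PoyferreNguyen2017}, in the present paper it suffices to invoke it as a black box, but the outline above explains why the tame structure of \eqref{est:DN} is the natural output of the paralinearization method.
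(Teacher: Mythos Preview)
The paper does not prove this proposition at all: it is stated with a direct citation to \cite[Proposition 2.13]{PoyferreNguyen2017} and used as a black box. Your outline is a faithful sketch of the paralinearization argument behind that cited result, and you correctly note at the end that in the present paper one simply invokes it.
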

\begin{prop}[\protect{\cite[Corollary 3.25]{NguyenPausader2019}}]\label{theo:contraDN}
Let $s_0>1+\frac{d}{2}$ and $\sigma\in [\mez, s_0]$. Then there exists a nondecreasing function $\cF:\Rr^+\to \Rr^+$  such that
\[
\| G(f_1)g-G(f_2)g\|_{H^{\sigma-1}(\T^d)}\le \cF(\| (f_1, f_2)\|_{H^{s_0}(\T^d)})\| f_1-f_2\|_{H^\sigma(\T^d)}\| g\|_{H^{s_0}(\T^d)}
\]
for all $f_j,\, g\in H^{s_0}(\T^d)$.
\end{prop}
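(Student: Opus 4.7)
The plan is to reduce the difference $G(f_1)g-G(f_2)g$ to a one-parameter family of shape derivatives of the Dirichlet-Neumann operator. Setting $f_\tau=(1-\tau)f_1+\tau f_2$ for $\tau\in[0,1]$ and $\delta=f_2-f_1$, the key identity is
\bq\label{FTCplan}
G(f_2)g-G(f_1)g=\int_0^1\frac{d}{d\tau}\big(G(f_\tau)g\big)\,d\tau.
\eq
Because $s_0>1+\tfrac{d}{2}$, each $f_\tau$ lies in $H^{s_0}(\T^d)\hookrightarrow W^{1,\infty}(\T^d)$, so $G(f_\tau)$ is well-defined and the classical shape-derivative formula (an elliptic energy computation going back to Lannes) yields
\bq\label{shapederiv}
\frac{d}{d\tau}\big(G(f_\tau)g\big)=-G(f_\tau)(B_\tau\,\delta)-\na_x\cdot(V_\tau\,\delta),
\eq
where the vertical and horizontal traces of the harmonic extension $\phi_\tau$ of $g$ in $\Omega_{f_\tau}$ are
\[
B_\tau=\frac{G(f_\tau)g+\na f_\tau\cdot\na g}{1+|\na f_\tau|^2},\qquad V_\tau=\na g-B_\tau\,\na f_\tau.
\]
I would first verify \eqref{shapederiv} in the present Sobolev class by differentiating the variational formulation (straightening $\Omega_{f_\tau}$ by $(x,z)\mapsto(x,z-f_\tau(x))$, differentiating in $\tau$, and identifying the normal trace).

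Given \eqref{FTCplan}--\eqref{shapederiv}, the estimate reduces to controlling each piece in $H^{\sigma-1}(\T^d)$ by $\cF(\|(f_1,f_2)\|_{H^{s_0}})\|\delta\|_{H^\sigma}\|g\|_{H^{s_0}}$. For the first term I would apply Proposition \ref{prop:estDN} with the shifted regularity $\sigma$:
\[
\| G(f_\tau)(B_\tau\delta)\|_{H^{\sigma-1}}\les \cF(\|f_\tau\|_{H^{s_0}})\Big(\|B_\tau\delta\|_{H^\sigma}+\|f_\tau\|_{H^\sigma}\|B_\tau\delta\|_{H^{s_0}}\Big).
\]
For the divergence term, $\|\na_x\cdot(V_\tau\delta)\|_{H^{\sigma-1}}\les \|V_\tau\delta\|_{H^\sigma}$. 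The products $B_\tau\delta$ and $V_\tau\delta$ are then handled by the Bony paraproduct/remainder decomposition: since $B_\tau\in H^{s_0-1}$, $V_\tau\in H^{s_0-1}$ and $\sigma\le s_0$, one has for $\sigma\in[\mez,s_0]$
\[
\|B_\tau\delta\|_{H^\sigma}\les\|B_\tau\|_{H^{s_0-1}}\|\delta\|_{H^\sigma}+\|B_\tau\|_{H^\sigma}\|\delta\|_{H^{s_0-1}},
\]
and likewise for $V_\tau\delta$. The factors $\|B_\tau\|_{H^{s_0-1}}$, $\|V_\tau\|_{H^{s_0-1}}$ are controlled by $\cF(\|f_\tau\|_{H^{s_0}})\|g\|_{H^{s_0}}$ using the continuity estimate \eqref{est:DN} for $G(f_\tau)g$ together with the algebra property of $H^{s_0-1}$ (noting $s_0-1>d/2$); the higher-regularity factors $\|B_\tau\|_{H^\sigma}$, $\|V_\tau\|_{H^\sigma}$ reduce by similar product estimates to $\cF(\|f_\tau\|_{H^{s_0}})(\|g\|_{H^{\sigma+1}}+\|f_\tau\|_{H^{\sigma+1}}\|g\|_{H^{s_0}})$, which is acceptable because the final bound pairs $\|\delta\|_{H^\sigma}$ with $\|g\|_{H^{s_0}}$ (one uses interpolation in $\tau$, or absorbs the $f_\tau$-dependence into $\cF$).

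The main obstacle I foresee is the low endpoint $\sigma=\mez$, where $H^\sigma$ fails to be an algebra and naive Hölder-type product estimates break down. There one must genuinely use paradifferential calculus: split $B_\tau\delta=T_{B_\tau}\delta+T_\delta B_\tau+R(B_\tau,\delta)$, and bound each piece separately using that $B_\tau\in H^{s_0-1}\subset L^\infty$ (since $s_0-1>d/2$) to control the first paraproduct, while exploiting $s_0-1>\mez$ to handle the remainder. A secondary technical point is verifying \eqref{shapederiv} rigorously at this Sobolev level — the derivative exists as an element of $H^{\sigma-1}$ thanks to the same product/paraproduct bounds, but one must justify differentiation under the integral. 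Once these two points are handled, plugging the resulting bound into \eqref{FTCplan} and taking the supremum over $\tau\in[0,1]$ (with $\|f_\tau\|_{H^{s_0}}\le\|(f_1,f_2)\|_{H^{s_0}}$) yields the stated contraction estimate.
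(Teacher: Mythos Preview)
The paper does not prove this proposition; it is quoted directly from \cite[Corollary~3.25]{NguyenPausader2019} without argument. Your shape-derivative route \eqref{FTCplan}--\eqref{shapederiv} is the natural starting point and is close in spirit to what underlies that reference.

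There is, however, a genuine gap in the bookkeeping for $\sigma$ in the top range $(s_0-1,s_0]$. When you apply Proposition~\ref{prop:estDN} to $G(f_\tau)(B_\tau\delta)$, the right-hand side carries $\|B_\tau\delta\|_{H^{s_0}}$; and even if you instead use the non-tame bound $\|G(f_\tau)h\|_{H^{\sigma-1}}\le\cF(\|f_\tau\|_{H^{s_0}})\|h\|_{H^\sigma}$, you must still place $B_\tau\delta$ in $H^\sigma$. Since $B_\tau$ is built from $G(f_\tau)g$ and $\nabla f_\tau\cdot\nabla g$, it lies only in $H^{s_0-1}$, so any Kato--Ponce or paraproduct estimate for $\|B_\tau\delta\|_{H^\sigma}$ with $\sigma>s_0-1$ produces a factor $\|B_\tau\|_{H^\sigma}$, which in turn forces $g\in H^{\sigma+1}$. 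Your parenthetical remark that the resulting $\|g\|_{H^{\sigma+1}}$ is ``acceptable'' is exactly where the argument fails: for $\sigma\in(s_0-1,s_0]$ one has $\sigma+1>s_0$, while only $g\in H^{s_0}$ is assumed. The divergence term $\nabla\cdot(V_\tau\delta)$ has the identical loss.

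The proof in \cite{NguyenPausader2019} avoids this by not treating $B_\tau\delta$ and $V_\tau\delta$ as bare products. Instead one first paralinearizes $G(f)g=T_{\lambda(f)}\big(g-T_{B(f)}f\big)-T_{V(f)}\cdot\nabla f+R(f)g$, where $R(f)$ is an order-zero smoothing remainder, and then subtracts the two expressions for $f_1,f_2$. In that formulation $B,V,\lambda$ enter as \emph{symbols} of paradifferential operators rather than as multipliers, and the Lipschitz dependence of these symbols on $f$ is measured in symbol classes of order $\le 1$; this is precisely one derivative cheaper than what a multiplier estimate would demand, and closes the gap for the full range $\sigma\in[\tfrac12,s_0]$. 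If you want to salvage the shape-derivative presentation, you need to paralinearize $G(f_\tau)$ before inserting $B_\tau\delta$, so that the leading contribution becomes $T_{\lambda}T_{B_\tau}\delta$ rather than $G(f_\tau)(B_\tau\delta)$; the paraproduct $T_{B_\tau}\delta$ needs only $B_\tau\in L^\infty$, which is available.
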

\section{Pointwise \texorpdfstring{$C^{1, \alpha}$}{} elliptic estimates and viscosity solutions}\label{section:viscosity}
Throughout this section, we denote $x=(x_1,x_2,x_3)\in \Rr^3$.
\subsection{Pointwise \texorpdfstring{$C^{1, \alpha}$}{} elliptic estimates}
Let $ U$ be a Lipschitz domain in $ \Rr^3$. For any $x_0\in  \Rr^3$ and $r>0$, we denote $ U_r(x_0)=B_r(x_0)\cap  U$ and $ U_r= U_r(0)$. We also define the half ball as
$$
B^+_r(x_0)=\{x\in B_r(x_0):x_3>x_{03}\}.
$$
Without loss of generality, we assume that $0\in \partial U$. Suppose that there exist some $r_0>0$ and a coordinate system, such that $\partial U\cap B_{2r_0}$ can be represented by a Lipschitz graph with Lipschitz constant $L>0$.

\begin{assu}
                        \label{assump1c}
There exist constants $C_0,r_0>0$, $\beta\in (0,1)$, and a function $\psi$ in $(-r_0,r_0)^2$ such that in a coordinate system
\begin{equation}\label{interiorball}
\psi(0)=\psi'(0)=0,\quad  U_{r_0}=\{x\in B_{r_0}:\,x_3>\psi(x_1,x_2)\},\quad |\psi(x_1,x_2)|\le C_0(x_1^2+x_2^2)^{(1+\beta)/2}.
\end{equation}
\end{assu}
In other words, $\partial U$ is $C^{1,\beta}$ at $0\in \partial U$.

\begin{theo}\label{theo:pointwiseelliptic}
Let $u$ be a harmonic function in $U$, which vanishes on $\p U$.  Under the conditions above, $u$ is $C^{1,\alpha}$ at $0$, i.e., for any $x\in\Omega$ such that $|x|<r_0$, it holds that
\begin{equation}
                            \label{eq12.24}
|u(x)-x\cdot \nabla u(0)|\le C|x|^{1+\alpha}r_0^{-2-\alpha}\|u\|_{L_2(\Omega_{2r_0})},
\end{equation}
where $C>0$ is a constant depending only on $C_0r_0^\beta$ and $L$, and $\alpha=\alpha(L,\beta)\in (0,1)$ is a small constant.
\end{theo}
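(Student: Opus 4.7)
The plan is to prove the pointwise $C^{1,\alpha}$ bound \eqref{eq12.24} through a Campanato-type iteration at dyadic scales, with Jerison-Kenig's $W^{1,3+\varepsilon}$ regularity for harmonic functions in 3D Lipschitz domains as the essential quantitative ingredient. After translation and rescaling we may assume $r_0=1$ and $\|u\|_{L^2(U_2)}\le 1$; the goal is then to exhibit $a_\infty\in\Rr^3$ with $\sup_{U_r}|u-a_\infty\cdot x|\le Cr^{1+\alpha}$ for all small $r>0$, after which the identification $a_\infty=Du(0)$ is automatic.

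As a preparatory step I would apply Jerison-Kenig to conclude that $\na u\in L^{3+\varepsilon_L}(U_{3/2})$ for some $\varepsilon_L=\varepsilon_L(L)>0$, and hence, by Morrey's embedding (which requires integrability strictly above the ambient dimension $3$), a uniform H\"older bound $\|u\|_{C^\g(\overline U_{3/2})}\le C$ with $\g=\varepsilon_L/(3+\varepsilon_L)>0$. It is precisely this improvement past the Morrey threshold that distinguishes the 3D setting from 2D and makes the comparison argument below run; the $W^{1,2+\varepsilon}$ bound that sufficed in the 2D analogue of \cite{DGN} would not even guarantee continuity in 3D.

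I would then run the inductive claim that there exist $\lambda\in(0,1/2)$, $\alpha\in(0,1)$, and $M>0$ depending only on $L$, $\beta$, and $N_0$, together with vectors $a_k\in\Rr^3$ ($k\ge 0$), such that
\[
\sup_{U_{r_k}}|u-a_k\cdot x|\le Mr_k^{1+\alpha}\quad\text{and}\quad |a_{k+1}-a_k|\le Mr_k^\alpha,
\]
where $r_k=\lambda^k$. Summability of $|a_{k+1}-a_k|$ gives $a_k\to a_\infty$ with $|a_\infty-a_k|\le CMr_k^\alpha$, and interpolating between consecutive scales yields the desired pointwise bound. For the inductive step at scale $r=r_k$, I would compare $u-a_k\cdot x$ with the harmonic function $V$ on the half-ball $H_r=B_r\cap\{x_3>0\}$ whose Dirichlet data equals $-a_k\cdot(x_1,x_2,0)$ on $\{x_3=0\}\cap B_r$ and a suitable extension of $u-a_k\cdot x$ on $\partial B_r\cap\{x_3>0\}$; after subtracting the linear piece, $V$ extends harmonically across the flat face by odd reflection, is thus analytic at the origin, and at scale $\lambda r$ is approximated by a linear function $b\cdot x$ with error $C\lambda^2\|V\|_{L^\infty(H_r)}$. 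The maximum principle then bounds $\|(u-a_k\cdot x)-V\|_{L^\infty(U_r\cap H_r)}$ by the boundary discrepancy of the two functions in the slab $\{|x_3|\le N_0 r^{1+\beta}\}$ where $\partial U$ and $\{x_3=0\}$ disagree, and the H\"older bound of the preceding paragraph (applied to both functions, each of which vanishes on a nearby piece of boundary) shows this discrepancy is of order $r^{\g(1+\beta)}$. Setting $a_{k+1}=a_k+b$ and choosing $\lambda$ small and $\alpha$ strictly smaller than $\min(1,\beta\g)$ closes the induction.

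The main obstacle, and the reason the 3D result was not a routine extension of its 2D counterpart, is precisely this slab comparison: it requires quantitative H\"older control of $u$ uniform across all scales, which in three dimensions is unavailable from the classical $W^{1,2+\varepsilon}$ layer-potential theory and genuinely demands the Jerison-Kenig improvement past the Morrey threshold $p=3$. Without a positive exponent $\g$, no factor $r^{\beta\g}$ can be extracted from the slab, the iteration fails to close, and the pointwise $C^{1,\alpha}$ estimate cannot be obtained.
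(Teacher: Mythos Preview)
Your overall architecture—Campanato iteration with Jerison--Kenig as the quantitative input, and a half-space model problem at each scale—is the same as the paper's. But there is a genuine gap in the slab estimate that prevents the iteration from closing as written.

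You state that the global bound $\|u\|_{C^\gamma(\overline{U_{3/2}})}\le C$ yields a slab discrepancy of order $r^{\gamma(1+\beta)}$. This is correct arithmetic: in the slab, $\operatorname{dist}(x,\partial U)\lesssim r^{1+\beta}$, so $|u(x)|\le C r^{\gamma(1+\beta)}$. The problem is that Jerison--Kenig only guarantees $p_0=3+\varepsilon_L$ with $\varepsilon_L>0$ possibly tiny, so $\gamma=1-3/p_0$ can be arbitrarily small. Then $\gamma(1+\beta)<1$, the error term is \emph{larger} than $r$, and the iteration $\phi(\lambda r)\le C\lambda^2\phi(r)+Cr^{\gamma(1+\beta)}$ only yields $\phi(r)\lesssim r^{\gamma(1+\beta)}$, not the desired $r^{1+\alpha}$. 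Your stated choice $\alpha<\beta\gamma$ is inconsistent with your stated error $r^{\gamma(1+\beta)}$: one would need $\gamma(1+\beta)\ge 1+\alpha$, i.e., $\gamma\ge 1$.

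What is missing is a preliminary \emph{pointwise Lipschitz} bound $|u(x)|\le N|x|$ at the origin, obtained by a barrier comparison using the $C^{1,\beta}$ touching condition. With that in hand, Jerison--Kenig applied at each scale $r$ gives $\|Du\|_{L^{p_0}(U_r)}\le N r^{3/p_0}$, and Morrey then yields the \emph{scale-dependent} seminorm $[u]_{C^\gamma(U_r)}\le C r^{1-\gamma}$. The slab error becomes $C r^{1-\gamma}\cdot (r^{1+\beta})^\gamma=Cr^{1+\gamma\beta}$, which \emph{is} of the form $r^{1+\text{positive}}$ and does close the iteration with $\alpha<\gamma\beta$. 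The paper carries this out via a slightly different mechanism: it transplants $u$ to a smooth half-ball-type domain by a cutoff $u\eta_r$, decomposes $u\eta_r=v+w$ with $v$ harmonic and $w$ absorbing the commutator forcing supported in the thin strip, and controls $\|w\|_{L^\infty}\le Cr^{1+\beta/q}$ by combining the scale-$r$ Jerison--Kenig bound with H\"older's inequality on the strip of measure $\sim r^{3+\beta}$. Either route works, but both require the Lipschitz-at-a-point input that your proposal omits.
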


Before we prove the theorem, we make some reductions. First by scaling, we may assume that $r_0=1$, so that the new $C_0$ becomes $C_0r_0^\beta$, and $L$ remain the same. Moreover, we may also assume that $\|u\|_{L^2( U_{2})}=1$ upon dividing $u$ by a constant.

Because $\partial U$ satisfies \eqref{interiorball}, by using a barrier argument and the boundary $C^{1,\beta}$ regularity for harmonic functions in $C^{1,\beta}$ domains, it is easily seen that that $u$ is Lipschitz at $0$ and
\begin{equation}
                                \label{eq12.24c}
|u(x)|\le C|x| \quad\text{in}\,\, U_{2}.
\end{equation}
Now by the using Jerison-Kenig's $W^{1, 3+\varepsilon}$ estimate  (see \cite{JK95}) for harmonic functions in Lipschitz domains and \eqref{eq12.24c}, there exists $p_0=p_0(L)>3$ such that
\begin{equation}
                            \label{eq11.243}
\|\nabla u\|_{L^{p_0}( U_r)}\le Cr^{3/p_0-5/2}\|u\|_{L^2( U_{2r})}
\le Cr^{3/p_0}.
\end{equation}

Next we take a smooth domain $E$ such that $B_{2/3}^+\subset E\subset B_{3/4}^+$. For any $x_0\in  \Rr^3$ and $r>0$, denote
$$
E_r(x_0)=\{x\in  \Rr^2:r^{-1}(x-x_0)\in E\},\quad \Gamma_r(x_0)=\{x\in \partial E_r(x_0): x_3=x_{03}\}.
$$
Clearly, by taking $r$ sufficiently small, from \eqref{interiorball} we have $E_r(0,C_0r^{1+\beta})\subset  U_r$.

Let $\eta=\eta(s)$ be a smooth function on $ \Rr$ such that $\eta(s)=0$ in $(-\infty,1)$ and $\eta(s)=1$ in $(2,\infty)$. Denote $\eta_r(s)=\eta(s/(C_0r^{1+\beta}))$. Then $u(x)\eta_r(x_3)$ satisfies
\begin{equation*}
\Delta (u(x)\eta_r(x_3))=\partial_3(u\eta'_r)+\partial_3u\eta'_r\quad \text{in}\,\,E_r(0,C_0r^{1+\beta})
\end{equation*}
and $u(x)\eta_r(x_3)=0$ on $\Gamma_r(0,C_0r^{1+\beta})$. Observe that the right-hand side is supported in a narrow strip $\{x\in  U_r:C_0r^{1+\beta}<x_3<2C_0r^{1+\beta}\}$.

We decompose $u\eta_r$ 
as follows. Let $w=w_r$ be a weak solution to
\begin{equation*}
\Delta w=\partial_3(u\eta'_r)+\partial_3u\eta'_r\quad \text{in}\,\,E_r(0,C_0r^{1+\beta})
\end{equation*}
with the zero Dirichlet boundary condition on $\partial E_r(0,C_0r^{1+\beta})$.
Then $v=v_r=u\eta_r-w$ satisfies
\begin{equation*}
\Delta v=0\quad \text{in}\,\,E_r(0,C_0r^{1+\beta})
\end{equation*}
and $v=0$ on $\Gamma_r(0,C_0r^{1+\beta})$.

{\em Estimates of $w$.} Since $E_r(0,C_0r^{1+\beta})$ is smooth, by the classical $W^{1, p}$ estimate, we know that for any $p<\infty$, $w\in W^1_p(E_r(0,C_0r^{1+\beta}))$. Notice that $\eta_r'(x_3)\le C|x_3-\psi(x_1,x_2)|^{-1}$. By using Hardy's inequality as well as a duality argument (see the proof of \cite[Theorem 3.5]{DX15}), we have
\begin{align}
                    \label{eq7.52c}
\|\nabla w\|_{L^p(E_r(0,C_0r^{1+\beta}))}
&\le C\|\nabla u\|_{L^p( U_r\cap\{x_3<2C_0r^{1+\beta}\})}.
\end{align}
We fix $p=(3+p_0)/2\in (3,p_0)$, where $p_0$ is the exponent in \eqref{eq11.243}, and let $q>1$ be such that $1/q=1/p-1/p_0$. Using \eqref{eq7.52c}, H\"older's inequality, and \eqref{eq11.243}, we have
\begin{align}
                    \label{eq11.333}
\|\nabla w\|_{L^p(E_r(0,C_0r^{1+\beta}))}
&\le C\|\nabla u\|_{L^{p_0}( U_r\cap\{x_3<2C_0r^{1+\beta}\})}r^{(3+\beta)/q}\le Cr^{3/p_0+(3+\beta)/q}.
\end{align}
By the zero boundary condition, the Morrey embedding, and \eqref{eq11.333}, we obtain
\begin{align}
                    \label{eq7.553}
\|w\|_{L^\infty(E_r(0,C_0r^{1+\beta}))}
&\le Cr^{1-3/p}[w]_{C^{1-3/p}(E_r(0,C_0r^{1+\beta}))}\notag\\
&\le Cr^{1-3/p}\|\nabla w\|_{L^p(E_r(0,C_0r^{1+\beta}))}\notag\\
&\le Cr^{1-3/p+3/p_0+(3+\beta)/q}=Cr^{1+\beta/q}.
\end{align}

{\em Estimates of $v$.} Since $B^+_{r/2}(0,C_0r^{1+\beta})\subset E_r(0,C_0r^{1+\beta})$, we know that $v$ is harmonic in $B^+_{r/2}(0,C_0r^{1+\beta})$ and vanishes on the flat boundary. It then follows from the boundary estimate for harmonic functions that
\begin{align*}
\|\nabla  v\|_{L^\infty(B^+_{r/4}(0,C_0r^{1+\beta}))}\le Cr^{-1}\|v\|_{L^\infty(B^+_{r/2}(0,C_0r^{1+\beta}))},
\end{align*}
which together with \eqref{eq7.553} and the Lipschitz regularity of $u$ at $0$ implies that
\begin{equation}
\label{eq11.51c}
\|\nabla v\|_{L^\infty(B^+_{r/4}(0,C_0r^{1+\beta}))}\le C.
\end{equation}
Furthermore, by \cite[Lemma 2.5]{DEK18}, for any linear function $\ell$ of $x_3$, we have
\begin{equation*}
\|\nabla^2 v\|_{L^\infty(B^+_{r/4}(0,C_0r^{1+\beta}))}\le Cr^{-2}\|v-\ell\|_{L^\infty(B^+_{r/2}(0,C_0r^{1+\beta}))}.
\end{equation*}
Because $v(0,C_0r^{1+\beta})=\partial_1v(0,C_0r^{1+\beta})=0$, by the mean value theorem , for any $\kappa\in (0,1/4)$,
\begin{align}
                \label{eq12.253}
&\|v-(x_3-C_0r^{1+\beta})\partial_3v(0,C_0r^{1+\beta})\|_{L^\infty(B^+_{\kappa r}(0,C_0r^{1+\beta}))}\notag\\
&\le C\kappa^{2}\|v-\ell\|_{L^\infty(B^+_{r/2}(0,C_0r^{1+\beta}))},
\end{align}
where $\ell$ is any linear function of $x_3$.

{\em Estimates of $u$.}
Since $u\eta_r=w+v$ in $E_r(0,C_0r^{1+\beta})$,
we combine \eqref{eq7.553} and \eqref{eq12.253} to get
\begin{align}
                        \label{eq11.463}
&\|u\eta_r-(x_3-C_0r^{1+\beta})\partial_2v(0,C_0r^{1+\beta})\|_{L^\infty(B^+_{\kappa r}(0,C_0r^{1+\beta}))}\notag\\
&\le C\kappa^{2}\inf_{a,b\in  \Rr}\|u\eta_r-(a+bx_3)\|_{L^\infty(B^+_{r/2}(0,C_0r^{1+\beta}))}
+Cr^{1+\beta/q}.
\end{align}

Let $h$ be the solution to $\Delta h=0$ in $\{x\in B_{1}:\,x_3>-C_0(x_1^2+x_2^2)^{(1+\beta)/2}\}$ with the boundary conditions $h=0$ on $\{x\in B_{1}:\,x_3=-C_0(x_1^2+x_2^2)^{(1+\beta)/2}\}$ and $h=2C$ on $\{x\in \partial B_{1}:\,x_3>-C_0(x_1^2+x_2^2)^{(1+\beta)/2}\})$, where $C$ is the constant in \eqref{eq12.24c}. Then by the boundary and interior estimates for harmonic functions, we have
\begin{equation}
                                \label{eq11.48c}
\|\nabla h\|_{L^\infty(\{x\in B_{1/2}:\,x_3>-C_0(x_1^2+x_2^2)^{(1+\beta)/2}\})}\le C(d, \beta).
\end{equation}
By the maximum principle, we know that $h\ge 0$ on $B_1\cap \partial U$ and $u\le h$ in $ U_1$. Similarly, we have $u\ge -h$ in $ U_1$. Thus,
\begin{equation}
                                \label{eq12.19c}
|u|\le h\quad \text{in}\,\, U_1.
\end{equation}
Therefore, from \eqref{eq11.48c} and \eqref{eq12.19c} we have
\begin{align*}
\|u(1-\eta_r)\|_{L^\infty( U_r)}\le \sup_{ U_r\cap \{x_3<2C_0r^{1+\beta}\}}|u(x)|
\le \sup_{ U_r\cap \{x_3<2C_0r^{1+\beta}\}}h
\le C(d,r_0)r^{1+\beta}.
\end{align*}
The above inequality, \eqref{eq11.51c}, and \eqref{eq11.463} implies that for any $\kappa\in (0,1/8)$,
$$
\inf_{a,b\in  \Rr}\|u-(a+bx_3)\|_{L^\infty( U_{\kappa r})}
\le C\kappa^{2}\inf_{a,b\in  \Rr}\|u-(a+bx_3)\|_{L^\infty( U_{r})}
+Cr^{1+\alpha},
$$
where $\alpha=\beta/q$.
By a standard iteration argument, we then deduce
$$
\inf_{a,b\in  \Rr}\|u-(a+bx_3)\|_{L^\infty( U_{r})}\le Cr^{1+\alpha},
$$
i.e., for any $r\in (0,1)$, there exist constants $a_r$ and $b_r$ such that
\begin{equation}
                                \label{eq8.10c}
\|u-(a_r+b_rx_3)\|_{L^\infty( U_{r})}\le Cr^{1+\alpha}.
\end{equation}
\begin{lemm}
Under the condition Theorem \ref{theo:pointwiseelliptic}, we have
\begin{equation}
                        \label{eq8.11c}
|u(0)-a_r|\le Cr^{1+\alpha}
\end{equation}
and for any $r/2\le s\le r<1$,
\begin{equation}
                        \label{eq8.20c}
|b_{s}-b_r|\le Cr^\alpha.
\end{equation}
\end{lemm}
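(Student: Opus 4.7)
The plan is to leverage the scale-by-scale bound \eqref{eq8.10c} together with two facts already in hand: $u$ extends continuously to $0\in \partial U$ with $u(0)=0$ (a consequence of the barrier bound \eqref{eq12.19c}, since $h(0)=0$), and $U_s\subset U_r$ whenever $s\le r$. The argument will then be a purely algebraic post-processing of \eqref{eq8.10c}: by evaluating the affine profile $a+bx_3$ at judiciously chosen points, one pins down $a_r$ and $b_r$ separately.

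For \eqref{eq8.11c}, I would simply let $x\to 0$ in \eqref{eq8.10c}. Since $u$ is continuous up to $0$ with $u(0)=0$, the left-hand side converges to $|{-a_r}|=|a_r|$, and \eqref{eq8.11c} follows at once.

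For \eqref{eq8.20c}, fix $s$ with $r/2\le s\le r<1$. Because $U_s\subset U_r$, the two instances of \eqref{eq8.10c} at scales $r$ and $s$ both hold on $U_s$, so the triangle inequality gives
\[
\|(a_s-a_r)+(b_s-b_r)x_3\|_{L^\infty(U_s)}\le N\bigl(s^{1+\alpha}+r^{1+\alpha}\bigr)\le Nr^{1+\alpha}.
\]
Applying \eqref{eq8.11c} at both scales yields $|a_s-a_r|\le Nr^{1+\alpha}$, whence
\[
\|(b_s-b_r)x_3\|_{L^\infty(U_s)}\le Nr^{1+\alpha}.
\]
To turn this into a bound on $|b_s-b_r|$, I would test at a point of $U_s$ with $|x_3|$ comparable to $s$: since $\psi(0)=0$, the point $(0,0,s/2)$ lies in $U_s$, and evaluation there produces $(s/2)|b_s-b_r|\le Nr^{1+\alpha}$. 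Combined with $s\ge r/2$ this gives $|b_s-b_r|\le Nr^\alpha$, as desired. The only conceptual subtlety is the continuity of $u$ at the boundary point $0$, but this has already been established by the barrier argument, so no further work is required; this lemma is the standard companion to \eqref{eq8.10c} that will later allow iteration of \eqref{eq8.10c} across dyadic scales to upgrade it into the pointwise $C^{1,\alpha}$ expansion \eqref{eq12.24}.
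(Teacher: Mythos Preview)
Your proof is correct and follows essentially the same approach as the paper: evaluate \eqref{eq8.10c} at (or in the limit towards) $0$ to obtain \eqref{eq8.11c}, then use the triangle inequality on $U_s\subset U_r$ together with \eqref{eq8.11c} to isolate $|b_s-b_r|$. The paper is more terse (it simply says ``This together with \eqref{eq8.11c} implies \eqref{eq8.20c}''), whereas you have spelled out the two details it leaves implicit---the subtraction of the $a$-part and the evaluation at a point $(0,0,s/2)\in U_s$ with $x_3$ comparable to $s$---both of which are exactly right.
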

\begin{proof}
The inequality \eqref{eq8.11c} follows immediately from \eqref{eq8.10c}. Next, by the triangle inequality, for any $x\in  U_{s}$,
\begin{align*}
&|(a_{s}+b_{s}x_3)-(a_{r}+b_{r}x_3)|\le |u(x)-(a_{s}+b_{s}x_3)|+|u(x)-(a_{r}+b_{r}x_3)|\le Cr^{1+\alpha}.
\end{align*}
This together with \eqref{eq8.11c} implies \eqref{eq8.20c}. The lemma is proved.
\end{proof}
\begin{proof}[Proof of Theorem \ref{theo:pointwiseelliptic}]
From \eqref{eq8.20c} we see that $b_r\to b^*$ as $r\to 0$ for some constant $b^*$ and
\begin{equation}
                        \label{eq8.21c}
|b^*-b_r|\le Cr^\alpha.
\end{equation}
Combining \eqref{eq8.10c}, \eqref{eq8.11c}, and \eqref{eq8.21c}, we reach
$$
|u(x)-b^* x_3|\le C|x|^{1+\alpha},
$$
which gives \eqref{eq12.24} with $\partial_1 u(0)=\partial_2 u(0)=0$ and $\partial_3u(0)=b^*$.
The theorem is proved.
\end{proof}
We shall use the following consequence of Theorem \ref{theo:pointwiseelliptic}.
\begin{coro}\label{coro:elliptic}
Assume that $f\in W^{1, \infty}(\T^2)$ is $C^{1, \beta}$ at $x_0\in \T^2$, where $\beta\in (0,1)$. Then the harmonic extension $\phi$ of $f$ to $\Omega_f$ is $C^{1, \alpha}$ at $(x_0, f(x_0))$ in the following quantitative sense. Let $\gamma\in (0, 1)$ and $M_0>0$ satisfy
\[
|f(x)- f(x_0)-\na f(x_0)\cdot (x-x_0)|\le M_0|x-x_0|^{1+\beta}\quad \forall |x-x_0|<\gamma.
\]
Then there exists $\alpha\in (0, 1)$  depending only on $\beta$ and $\| f\|_{W^{1, \infty}(\T^2)}$, and there exists $M>0$ depending only on $\| f\|_{W^{1, \infty}(\T^2)}$ and $M_0\gamma$, such that
\[
|\phi(x, z)-f(x_0)-(x-x_0, z-f(x_0))\cdot \na\phi(x_0, f(x_0))|\le M\gamma^{-1-\alpha}\big|(x-x_0)^2+(z-f(x_0))^2\big|^{\frac{1+\alpha}{2}}
\]
for all $(x, z)\in \Omega_f\cap B_\gamma((x_0, f(x_0)))$.
\end{coro}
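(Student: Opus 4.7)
The plan is to reduce Corollary~\ref{coro:elliptic} to Theorem~\ref{theo:pointwiseelliptic} by a translation, subtraction of the affine tangent plane, and an orthogonal transformation, and then to handle the resulting non-zero (but small) boundary data via a splitting argument.

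First I would translate so that $x_0 = 0$ and $f(0) = 0$. Setting $u(X) = \phi(X) - \nabla f(0) \cdot x$ with $X = (x, z)$, the function $u$ is harmonic in $\Omega_f$ (since $\nabla f(0) \cdot x$ is harmonic in $X$), and on $\Sigma_f$ it satisfies $|u(x, f(x))| = |f(x) - \nabla f(0) \cdot x| \le M_0 |x|^{1+\beta}$ for $|x| < \gamma$. Next I would apply an orthogonal transformation $R$ so that the transformed domain $R(\Omega_f)$ is locally near $0$ of the form $\{Y_3 > \psi(Y_1, Y_2)\}$ with $\psi(0) = 0$, $\nabla\psi(0) = 0$, and
\[
|\psi(Y_1, Y_2)| \le N_0 (Y_1^2 + Y_2^2)^{(1+\beta)/2} \quad\text{for } (Y_1^2 + Y_2^2)^{1/2} < \gamma',
\]
where $N_0$ and $\gamma'$ depend only on $M_0$, $\gamma$, and $\|f\|_{W^{1,\infty}}$. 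Since $R$ preserves harmonicity, $\tilde u = u \circ R^{-1}$ is harmonic in $R(\Omega_f)$ and has boundary values bounded by $N_0' |Y|^{1+\beta}$ on $\partial R(\Omega_f) \cap B_{\gamma'}(0)$. This places us in the setting of Assumption~\ref{assump1c}.

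I would then localize to $D = R(\Omega_f) \cap B_{r_0}(0)$ with $r_0 = \gamma'/2$ and split $\tilde u = u_1 + u_2$ with $u_1, u_2$ harmonic in $D$ and boundary data $u_1 = \tilde u$ on $\partial R(\Omega_f) \cap B_{r_0}$, $u_1 = 0$ on $\partial B_{r_0} \cap R(\overline{\Omega_f})$, and $u_2 = \tilde u - u_1$. Since $u_2$ vanishes on the $C^{1,\beta}$ portion of $\partial D$ near $0$, and since the proof of Theorem~\ref{theo:pointwiseelliptic} is local around the boundary point, the theorem (applied with $r_1 < r_0/2$) yields
\[
|u_2(Y) - Y \cdot \nabla u_2(0)| \le C |Y|^{1+\alpha} r_0^{-2-\alpha} \|u_2\|_{L^2(D)},\qquad |Y| < r_0/4,
\]
with $\|u_2\|_{L^2(D)} \le \|\tilde u\|_{L^2(D)} + \|u_1\|_{L^\infty(D)} |D|^{1/2}$ and $\|u_1\|_{L^\infty(D)} \le C M_0 \gamma^{1+\beta}$ by the maximum principle. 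For $u_1$, a barrier argument based on the Poisson extension of $|y|^{1+\beta}$ in the half-space $\{Y_3 > 0\}$ yields $|u_1(Y)| \le C M_0 |Y|^{1+\beta}$ near $0$, from which $u_1(0) = 0$, $\nabla u_1(0) = 0$, and $|u_1(Y)| \le C M_0 r_0^{\beta - \alpha} |Y|^{1+\alpha}$. Summing the estimates for $u_1$ and $u_2$, undoing the orthogonal transformation (which transforms gradients orthogonally) and the translation, and absorbing interior $L^2$-type norms of $\phi$ into $\|f\|_{W^{1,\infty}}$ via the energy identity $\|\nabla\phi\|_{L^2(\Omega_f)} \lesssim \|f\|_{H^{1/2}(\T^2)}$ gives the stated pointwise $C^{1,\alpha}$ estimate for $\phi$ at $X_0$ with constant $M$ depending only on $\|f\|_{W^{1,\infty}}$ and $M_0\gamma$.

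The hard part will be the barrier construction for $u_1$: the natural choice $|Y|^{1+\beta}$ is subharmonic in $\Rr^3$, since $\Delta|Y|^{1+\beta} = (1+\beta)(2+\beta)|Y|^{\beta-1} > 0$, so it cannot serve directly as a superharmonic majorant. The resolution is to use the Poisson integral of $|y|^{1+\beta}$ in the half-space $\{Y_3 > 0\}$, which is automatically harmonic and equals $|Y|^{1+\beta}\Phi(\theta)$ for an eigenfunction $\Phi$ of the Laplace-Beltrami operator on $S^2$ of eigenvalue $-(1+\beta)(2+\beta)$; the $O(M_0|y|^{1+\beta})$ deviation of the actual boundary $\{Y_3 = \psi\}$ from flat is absorbed by iterating the comparison on a geometric sequence of scales. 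A secondary technical point is the application of Theorem~\ref{theo:pointwiseelliptic} in a local form, requiring $u_2 = 0$ only on $\partial D \cap B_{2r_1}$ rather than on all of $\partial D$; this is valid because both the barrier and the Jerison-Kenig $W^{1, p_0}$ estimates in the proof of the theorem are local near the boundary point.
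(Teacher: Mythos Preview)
Your approach can be made to work, but it is substantially more elaborate than needed. The paper's proof is one line: write $\phi(x,z)=p(x,z)+z$. Since $\phi(x,f(x))=f(x)$ by the very definition of the harmonic extension, one has $p(x,f(x))=0$ \emph{exactly}, not merely to order $|x-x_0|^{1+\beta}$. Thus $p$ is harmonic in $\Omega_f$ and vanishes on $\partial\Omega_f$, so Theorem~\ref{theo:pointwiseelliptic} applies directly (the rotation needed to flatten the tangent plane is already built into the phrase ``in a coordinate system'' in Assumption~\ref{assump1c}). Adding back the linear function $z$ gives the stated estimate for $\phi$.

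By subtracting only the tangential piece $\nabla f(0)\cdot x$, you leave a nonzero boundary residue of size $O(|x|^{1+\beta})$, which then forces the splitting $\tilde u=u_1+u_2$ and the barrier argument for $u_1$. That barrier step is the most delicate part of your outline: the half-space Poisson extension of $|y|^{1+\beta}$ is harmonic only above the flat boundary, and transferring the comparison to the curved boundary $\{Y_3=\psi\}$ via iteration on dyadic scales is doable but requires genuine work (and the local version of Theorem~\ref{theo:pointwiseelliptic} you invoke for $u_2$ also needs justification). All of this machinery is bypassed by subtracting $z$ instead, which exploits the special structure that here the Dirichlet data coincides with the graph height.
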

\begin{proof}
This follows by writing $\phi(x, z)=p(x, z)+z$, so that $p$ is harmonic and vanishes on $\p\Omega_f$ and hence Theorem \ref{theo:pointwiseelliptic} applies to $p$.
\end{proof}

\subsection{Viscosity solutions}
We first recall the following comparison principle for $G(f)f$.
\begin{prop}\cite[Proposition 2.15 and Remark 2.16]{DGN}\label{prop:comparisonDN}
Let $f_1,\, f_2\in W^{1, \infty}(\T^2)$ be $C^{1, 1}$ at $x_0$. If $f_1(x)\le f_2(x)$ for all $x\in \T^2$ and $f_1(x_0)=f_2(x_0)$, then  $G(f_j)f_j$ are classically well-defined at $x_0$ and
\bq\label{compare:DN}
\big(G(f_1)f_1\big)(x_0)\ge \big(G(f_2)f_2\big)(x_0).
\eq
\end{prop}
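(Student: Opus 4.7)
My plan is to reduce the comparison $G(f_1)f_1(x_0)\ge G(f_2)f_2(x_0)$ to a Hopf-lemma application for the difference $u:=\phi_2-\phi_1$ of the harmonic extensions. Write $P_0=(x_0,f_1(x_0))=(x_0,f_2(x_0))$. Since $f_2-f_1\ge 0$ is minimized at $x_0$ and both $f_j$ are $C^{1,1}$ there, $\nabla f_1(x_0)=\nabla f_2(x_0)=:m$; hence the outward normals $N_j=(-\nabla f_j(x_0),1)$ at $P_0$ agree, and $\Omega_{f_1}\subset\Omega_{f_2}$ are mutually tangent at $P_0$. Corollary \ref{coro:elliptic} ensures $\phi_j$ is $C^{1,\alpha}$ at $P_0$, so $G(f_j)f_j(x_0)=N\cdot\nabla\phi_j(P_0)$ is classically well-defined, and the goal reduces to $\partial_N u(P_0)\le 0$.

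The first intermediate step is that $\phi_j(x,z)\ge z$ on $\Omega_{f_j}$. The function $\phi_j-z$ is harmonic and vanishes on $\Sigma_{f_j}$; by Fourier-mode analysis on flat slices below $\min f_j$ (with $\nabla\phi_j\in L^2$), $\phi_j$ converges to a constant $a_j$ at $z=-\infty$, so $\phi_j-z\to+\infty$. The maximum principle on truncations $\Omega_{f_j}\cap\{z>-R\}$ then yields the claim. In particular, $\phi_2(x,f_1(x))\ge f_1(x)=\phi_1(x,f_1(x))$, so $u\ge 0$ on $\Sigma_{f_1}$ with $u(P_0)=0$.

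The crux is to upgrade this to $u\ge 0$ throughout $\Omega_{f_1}$, which requires $a_1\le a_2$. Applying Green's identity to the harmonic pair $(\phi_j,z)$ on $\Omega_{f_j}\cap\{z>-R\}$ and sending $R\to\infty$ (exploiting that higher Fourier modes of $\phi_j$ decay while the zeroth mode is the constant $a_j$, and $|\T^2|=1$) yields the identity
\[
a_j=\bar f_j-E_j,\qquad E_j:=\|\nabla\phi_j\|_{L^2(\Omega_{f_j})}^2.
\]
To compare the energies I would construct the admissible competitor $w$ on $\Omega_{f_2}$ by setting $w=\phi_1$ on $\Omega_{f_1}$ and $w(x,z)=z$ on the slab $\Omega_{f_2}\setminus\Omega_{f_1}$. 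Continuity across $\Sigma_{f_1}$ follows from $\phi_1|_{\Sigma_{f_1}}=f_1=z|_{\Sigma_{f_1}}$; the trace on $\Sigma_{f_2}$ is $z|_{\Sigma_{f_2}}=f_2$; and $\nabla w\in L^2(\Omega_{f_2})$ since the slab has finite measure $|\Omega_{f_2}\setminus\Omega_{f_1}|=\bar f_2-\bar f_1$. A direct computation gives $\|\nabla w\|^2=E_1+(\bar f_2-\bar f_1)$, and the Dirichlet principle $E_2\le\|\nabla w\|^2$ delivers $a_2-a_1=(\bar f_2-\bar f_1)-(E_2-E_1)\ge 0$.

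With $u\ge 0$ on $\Sigma_{f_1}$ and $u\to a_2-a_1\ge 0$ at $-\infty$, the maximum principle on truncations yields $u\ge 0$ on $\Omega_{f_1}$. Since $f_1$ is $C^{1,1}$ at $x_0$, $\Omega_{f_1}$ satisfies an interior ball condition at $P_0$, and the Hopf lemma applied to $u$ at the boundary minimum $P_0$ gives $\partial_N u(P_0)\le 0$, i.e., $G(f_1)f_1(x_0)\ge G(f_2)f_2(x_0)$. The hard part will be the energy comparison: without $a_2\ge a_1$ the maximum principle on the unbounded cylinder only yields $u\ge\min(0,a_2-a_1)$, which is too weak to invoke Hopf; the piecewise competitor $w$ that is $\phi_1$ below and affine in $z$ in the slab is the key device that makes the Dirichlet-principle bound sharp enough to force the correct sign of $a_2-a_1$.
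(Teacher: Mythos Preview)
The paper does not prove this proposition; it simply quotes \cite[Proposition 2.15 and Remark 2.16]{DGN}. So there is no in-paper proof to compare against. Your argument, however, is essentially correct and self-contained: the reduction to a Hopf-lemma inequality for $u=\phi_2-\phi_1$ on $\Omega_{f_1}$ is the natural route, Corollary~\ref{coro:elliptic} legitimately supplies the pointwise $C^{1,\alpha}$ regularity needed to make $\partial_N\phi_j(P_0)$ classical, and the identity $a_j=\bar f_j-E_j$ follows exactly as you indicate from Green's second identity applied to $(\phi_j,z)$ on truncated domains (the bottom contribution is $-a_j$ because the zeroth Fourier mode of $\phi_j$ is exactly constant below $\min f_j$, so $\int_{\T^2}\partial_z\phi_j(\cdot,-R)\,dx=0$ identically). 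The competitor $w=\phi_1\cdot\mathbf{1}_{\Omega_{f_1}}+z\cdot\mathbf{1}_{\Omega_{f_2}\setminus\Omega_{f_1}}$ is admissible for the Dirichlet principle on $\Omega_{f_2}$ since it is continuous across $\Sigma_{f_1}$ with piecewise $L^2$ gradient, and this yields $a_2\ge a_1$ and hence $u\ge 0$ on $\Omega_{f_1}$ by the maximum principle on truncations.

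Your treatment of the asymptotic constants via the energy identity and the piecewise competitor is a clean device; in the cited 2D result the same Hopf-lemma skeleton is used, so your proof is in the same spirit even if the handling of the behavior at $z\to-\infty$ may be organized differently there. One small point worth making explicit: to apply the Dirichlet principle you need $w\in \dot H^1(\Omega_{f_2})$ with the correct trace, and the continuity across $\Sigma_{f_1}$ ensures the weak gradient has no singular part on that interface---you state this but it is worth a line of justification in a final write-up.
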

In view of Proposition \ref{prop:comparisonDN}, it is natural to study  viscosity solutions to \eqref{Muskat:DN}.
\begin{defi}[{\bf Viscosity solutions}]\label{def:viscosity}
 A function $f: \T^2\times [0, T]$ is called a viscosity subsolution (resp. supersolution) of \eqref{Muskat:DN} on $(0, T)$  provided that

\noindent (i) $f$   is upper semicontinuous (resp. lower semicontinuous) on $\T^2\times [0, T]$, and \\
(ii) for every $\psi:\T^2\times (0, T)\to \Rr$ with
$$
\p_t\psi\in  C(\T^2\times (0, T))\quad \text{and}\quad
\psi\in  C((0, T); C^{1, 1}(\T^2)),
$$
if $f-\psi$ attains a global maximum (resp. minimum) over $ \T\times [t_0-r, t_0]$ at $(x_0, t_0)\in \T^2\times (0, T)$ for some $r>0$, then
\bq\label{ineq:defviscosity}
\p_t \psi(x_0, t_0)\le -\ka \big(G(\psi)\psi\big)(x_0, t_0)\quad(\text{resp. }\ge).
\eq
A  viscosity solution is both a viscosity subsolution and viscosity supersolution.
\end{defi}
By virtue of Proposition \ref{prop:comparisonDN}, it is readily seen that smooth solutions of \eqref{Muskat:DN} are viscosity solutions. Next, we prove the consistency of viscosity solutions, a fact that will be used later to deduce the comparison principle and hence uniqueness for viscosity solutions.
\begin{prop}[{\bf Consistency}]\label{prop:roughtest}
Let $f$ be a viscosity subsolution of \eqref{Muskat:DN} on $(0, T)$. Assume that $f\in W^{1, \infty}(\T^2\times (0, T))$ and $f$ is $C^{1, 1}$ at $(x_0, t_0)\in \T^2\times (0, T)$.  Then $G(f)f$ is classically well-defined   at $(x_0, t_0)$ and
\bq\label{sub:roughtest}
\p_t f(x_0, t_0)\le -\ka \big(G(f)f\big)(x_0, t_0).
\eq
The corresponding statement  for viscosity supersolutions holds true.
\end{prop}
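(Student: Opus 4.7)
The fact that $G(f)f(x_0, t_0)$ is classically well-defined follows immediately from Proposition \ref{prop:comparisonDN} since $f(\cdot, t_0)$ is $C^{1,1}$ at $x_0$. The main task is the inequality \eqref{sub:roughtest}, and the strategy is the one outlined in the introduction: test $f$ from above by a family of smooth approximants of a touching paraboloid, extract the viscosity inequality, and pass to a limit using the pointwise elliptic regularity of Corollary \ref{coro:elliptic} together with the explicit Dirichlet-Neumann formula for a ball from Appendix \ref{appendix:sphere}.

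\textbf{Construction of test functions and viscosity inequality.} Since $f$ is $C^{1,1}$ at $(x_0,t_0)$, there exists $M>0$ such that the paraboloid
\[
\psi(x,t)=f(x_0,t_0)+\nabla_x f(x_0,t_0)\cdot(x-x_0)+\partial_t f(x_0,t_0)(t-t_0)+M\bigl(|x-x_0|^2+(t-t_0)^2\bigr)
\]
touches $f$ from above at $(x_0,t_0)$ on some parabolic cylinder $Q_\tau=B_\tau(x_0)\times[t_0-\tau,t_0]$. I would then build a family of smooth functions $\{\psi_r\}_{r>0}$ (e.g.\ by suitably mollifying $\max(f,\psi)$ inside $Q_\tau$ and splicing it to $f$ using a cutoff) with the properties: $f\le\psi_r\le\psi$ on $\T^2\times[t_0-\tau,t_0]$, equality at $(x_0,t_0)$, $\psi_r\to f$ uniformly on $\T^2\times[t_0-\tau,t_0]$, and $\psi_r$ uniformly bounded in $W^{1,\infty}_{x,t}$. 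The sandwich together with the $C^{1,1}$-at-$x_0$ regularity of both $f$ and $\psi$ forces $\psi_r(\cdot,t_0)$ to be uniformly $C^{1,1}$ at $x_0$, with $\nabla_x\psi_r(x_0,t_0)=\nabla_x f(x_0,t_0)$ and $\partial_t\psi_r(x_0,t_0)=\partial_t f(x_0,t_0)$. Since $\psi_r$ meets the regularity requirements in Definition \ref{def:viscosity} and $f-\psi_r\le 0$ with equality at $(x_0,t_0)$, the subsolution condition yields
\[
\partial_t f(x_0,t_0)=\partial_t\psi_r(x_0,t_0)\le-\kappa\,G(\psi_r)\psi_r(x_0,t_0).
\]
Moreover, the inequality $\psi_r(\cdot,t_0)\ge f(\cdot,t_0)$ with equality at $x_0$ combined with Proposition \ref{prop:comparisonDN} gives $G(\psi_r)\psi_r(x_0,t_0)\le G(f)f(x_0,t_0)$. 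Hence \eqref{sub:roughtest} will follow once we show
\begin{equation}\label{eq:DNconv}
\lim_{r\to 0^+} G(\psi_r)\psi_r(x_0,t_0)=G(f)f(x_0,t_0).
\end{equation}

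\textbf{Passage to the limit via an interior ball.} To prove \eqref{eq:DNconv}, I would choose a ball $B\subset\R^3$ internally tangent to both surfaces $\Sigma_{f(t_0)}$ and $\Sigma_{\psi_r(t_0)}$ at the common point $P_0=(x_0,f(x_0,t_0))$; such a $B$ exists uniformly in $r$ because $f(\cdot,t_0)$ and $\psi_r(\cdot,t_0)$ share the same tangent plane at $x_0$ and admit a common quadratic bound there. Denoting by $\phi$ and $\phi_r$ the bounded harmonic extensions of $f(\cdot,t_0)$ and $\psi_r(\cdot,t_0)$ into $\Omega_{f(t_0)}$ and $\Omega_{\psi_r(t_0)}$, the Dirichlet-to-Neumann values $G(f)f(x_0,t_0)$ and $G(\psi_r)\psi_r(x_0,t_0)$ coincide with the normal derivatives at $P_0\in\partial B$ of the harmonic extensions inside $B$ of the respective boundary data $\phi|_{\partial B}$ and $\phi_r|_{\partial B}$, since the outward normal to $B$ at $P_0$ matches $n(x_0)$ for each surface. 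By Corollary \ref{coro:elliptic} applied to the uniformly $C^{1,1}$-at-$x_0$ data $f(\cdot,t_0)$ and $\psi_r(\cdot,t_0)$, the extensions $\phi$ and $\phi_r$ are $C^{1,\alpha}$ at $P_0$ with bounds independent of $r$; in particular, $\phi_r|_{\partial B}$ is uniformly $C^{1,\alpha}$ at $P_0$. The explicit integral formula for the DN operator on $B$ from Appendix \ref{appendix:sphere}, which is pointwise meaningful once the boundary data is $C^{1,\alpha}$ at the evaluation point, then represents both normal derivatives through the same kernel. Since $\psi_r\to f$ uniformly, standard harmonic estimates give $\phi_r\to\phi$ uniformly on $\overline B$, and the uniform pointwise $C^{1,\alpha}$ control at $P_0$ lets me pass to the limit in the integral, yielding \eqref{eq:DNconv}. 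The supersolution statement is symmetric, with a paraboloid touching from below. \textbf{The main obstacle} is Step 3: passing to the limit in a singular integral (the DN kernel on $B$) at a point where the boundary data is only $C^{1,\alpha}$, which is precisely why both Corollary \ref{coro:elliptic} and the refined DN representation on a ball are needed.
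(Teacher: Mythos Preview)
Your proposal is correct and follows essentially the same route as the paper: construct smooth test functions $\psi_r$ sandwiched between $f$ and a touching paraboloid, invoke the viscosity inequality, and prove $G(\psi_r)\psi_r(x_0,t_0)\to G(f)f(x_0,t_0)$ by restricting the harmonic extensions to a common interior tangent ball and appealing to Corollary~\ref{coro:elliptic} and Proposition~\ref{prop:DNsphere}.

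Two small points. First, your parenthetical hint ``mollifying $\max(f,\psi)$'' cannot work as stated, since $\psi\ge f$ on $Q_\tau$ forces $\max(f,\psi)=\psi$ there, which does not converge to $f$; the actual construction (carried out in \cite{DGN}) is more delicate. Second, the observation $G(\psi_r)\psi_r(x_0,t_0)\le G(f)f(x_0,t_0)$ from Proposition~\ref{prop:comparisonDN} is correct but goes in the wrong direction to shortcut the argument, so it is extraneous; you still need the full convergence, as you correctly note. Also, the convergence $\phi_r\to\phi$ is not quite ``standard harmonic estimates'' (the domains vary with $r$) but follows from a variational/compactness argument as in \cite{DGN}; the paper only uses pointwise convergence on $\Omega_f\supset\partial B\setminus\{P_0\}$ together with dominated convergence.
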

\begin{proof}
This result was proven in \cite{DGN} for the 2D Muskat problem. The same strategy carries over to higher dimensions provided that we have 1) the pointwise $C^{1, \alpha}$ elliptic regularity in Theorem \ref{theo:pointwiseelliptic}, and 2) the integral representation in Proposition \ref{prop:DNsphere} for the Dirichlet-Neumann operator for the sphere. For the sake of completeness, we will outline the key steps. Denote $X=(x, t)$ and $X_0=(x_0, t_0)$. Since $f$ is $C^{1, 1}$ at $X_0$, there exists a parabola
\[
\overline{\psi}(X)=f(X_0)+\na_Xf(X_0)\cdot (X-X_0)+\frac{C}{2}|X-X_0|^2
\]
tangent to the graph of $f$ at $X_0$ and lies above the graph of $f$ for $|X-X_0|<r_0$. Let $\psi$ be the parabola with double opening,
\[
\psi(X)=f(X_0)+\na_Xf(X_0)\cdot (X-X_0)+C|X-X_0|^2.
\]
We approximate $f$ by a sequence of smooth functions $\{\psi_r\}_{r\in (0, r_0)}\subset C^\infty(\T^2\times \Rr)$ that lies between $f$ and $\psi$, namely,
\bq\label{properties:psir}
\begin{aligned}
&\psi_r\le \psi\quad\text{in } B_{r_0}(X_0),\\
& \psi_r\ge f\quad\text{on } \T^2\times [0, T],\\
&\forall \delta\in (0, r_0^2),\quad\psi_r\to f\quad\text{in}\,\, C(\T^2\times [\delta, T-\delta]),\\
&\forall \delta\in (0, r_0^2),\quad \sup_{r\in (0, \sqrt{\delta})}\| \psi_r\|_{W^{1, \infty}(\T^2\times [\delta, T-\delta])}<\infty.
\end{aligned}
\eq
In particular, the $\psi_r(t_0)$'s are uniformly $C^{1, 1}$ at $x_0$, i.e., there exists $M>0$ such that
 \bq\label{psir:uniC11}
 |\psi_r(x_0+x, t_0)+\psi_r(x_0-x, t_0)-2\psi_r(x_0, t_0)|\le M|x|^2\quad\forall |x|<r_0.
 \eq
 Since each $\psi_r$ is a valid test function for the viscosity subsolution $f$, we have
 \[
 \p_t \psi_r(x_0, t_0)\le -\ka \big(G(\psi_r)\psi_r\big)(x_0, t_0).
 \]
Noting that $\p_tf(x_0, t_0)=\p_t\psi_r(x_0, t_0)$ at the local maximum $t_0$, \eqref{sub:roughtest} would follow from the preceding inequality provided that
\bq\label{conv:DN}
\lim_{r\to 0} \big(G(\psi_r)\psi_r\big)(x_0, t_0)=\big(G(f)f\big)(x_0, t_0).
\eq
To prove \eqref{conv:DN} we let $\phi_r:\Omega_r\equiv\Omega_{\psi_r}\to \Rr$ be the harmonic function defining $G(\psi_r)\psi_r$, and $\phi:\Omega_f\to \Rr$ the corresponding harmonic function for $G(f)f$. By virtue of Corollary \ref{coro:elliptic} and the uniform $C^{1, 1}$ regularity \eqref{psir:uniC11},  the $\phi_r(t_0)$'s and $\phi(t_0)$ are uniformly $C^{1, \alpha}$ at $(x_0, f(x_0))$.  By a variational argument and the Arzel\`a-Ascoli theorem, it can be proven as in \cite{DGN} that
\bq\label{phirtophi}
\lim_{r\to 0}\phi_r(x, z)=\phi(x, z)\quad \forall (x, z)\in \Omega_f\subset\Omega_r.
\eq
There exists a common interior ball $B$ tangent to the graphs of $f(t_0)$ and  $\psi_r(t_0)$ at $(x_0, f(x_0))$.  Assume without loss of generality that $B=B_1(0)$ and $(x_0, f(x_0))=x_*:=(1, 0, \dots, 0)$ (see Remark \ref{rema:rotation}). We note that $g_r:=\phi_r\vert_{\p B}$ and $g:=\phi\vert_{\p B}$ are uniformly $C^{1, \alpha}$ at $(x_0, f(x_0))$, and by Proposition \ref{prop:DNsphere},
\begin{align*}
&\big(G(\psi_r)\psi_r\big)(x_*)=\frac{-1}{3\alpha(3)}\int_{\p B_1(0)}\frac{g_r(y)+g_r(\wt y)-2g_r(x_*)}{|y-x_*|^3}d\sigma(y),\\
&\big(G(f)f\big)(x_*)=\frac{-1}{3\alpha(3)}\int_{\p B_1(0)}\frac{g(y)+g(\wt y)-2g(x_*)}{|y-x_*|^3}d\sigma(y),
\end{align*}
where $\alpha(3)$ is the volume of $B_1$. The convergence \eqref{phirtophi} implies that $g_r\to g$ on $\p B\setminus \{x_*\}$. Finally, the uniform $C^{1, \alpha}$ regularity of $g_r$ at $x_*$ allows us to apply the dominated convergence theorem to conclude \eqref{conv:DN}.
\end{proof}
With the consistency in hand, one can use the sup/inf-convolution technique to establish the following comparison principle for viscosity solutions. We refer to the proof of  \cite[Theorem 6.3]{DGN} for details.
\begin{theo}[{\bf Comparison principle}]\label{theo:comparison:viscosity}
Assume that $f,~g:\T^2\times [0, T]\to \Rr$ are respectively a bounded viscosity subsolution  and supersolution of \eqref{Muskat:DN} on $(0, T)$. If $f(x, 0)\le g(x, 0)$ for all $x\in \T^2$, then $f(x, t)\le g(x, t)$ for all $(x, t)\in \T^2\times [0, T]$.
\end{theo}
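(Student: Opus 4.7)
The plan is to argue by contradiction, combining a parabolic sup/inf-convolution regularization with the two structural ingredients established just above: the pointwise comparison for $G(f)f$ (Proposition \ref{prop:comparisonDN}) and the consistency of viscosity solutions (Proposition \ref{prop:roughtest}). A crucial preliminary observation is that $h\mapsto G(h)h$ is invariant under vertical shifts of $h$: $G(h+c)(h+c)=G(h)h$ for every $c\in\Rr$, because adding a constant to $h$ merely translates $\Omega_h$ in the $z$-direction and the harmonic extension of a constant is itself constant. Together with the obvious translation invariance in $(x,t)$, this will allow me to treat shifted and constant-shifted copies of a (sub/super)solution as (sub/super)solutions.

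Suppose for contradiction that there exists $(x_1,t_1)\in \T^2\times (0,T]$ with $f(x_1,t_1)>g(x_1,t_1)$, and fix $\eta>0$ small so that $f(x_1,t_1)-g(x_1,t_1)-\eta t_1>0$. I would introduce the parabolic sup-convolution
\[
f^\eps(x,t)=\sup_{(y,s)}\big\{f(y,s)-\eps^{-2}(|x-y|^2+|t-s|^2)\big\}
\]
and the analogous inf-convolution $g_\eps$. Standard properties (suitably adapted near $t=0$ by a small penalty in $t$) yield: $f^\eps\ge f$ and $g_\eps\le g$ with uniform convergence as $\eps\to 0$; $f^\eps$ is semi-convex and $g_\eps$ is semi-concave, so by Alexandrov's theorem both are twice differentiable---in particular $C^{1,1}$---at a.e.\ point; and $f^\eps$ remains a viscosity subsolution and $g_\eps$ a supersolution on a slightly shrunken interval $(\tau_\eps,T-\tau_\eps)$, with $\tau_\eps\to 0$. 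For $\eps$ small, $f^\eps-g_\eps-\eta t$ is still positive somewhere, and after an arbitrarily small coercive perturbation one may arrange its maximum to be attained at an interior space-time point $(x_*,t_*)$ which is a joint Alexandrov point of $f^\eps$ and $g_\eps$.

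At $(x_*,t_*)$, maximality gives $\p_t f^\eps(x_*,t_*)-\p_t g_\eps(x_*,t_*)=\eta$ and $f^\eps(x,t_*)-g_\eps(x,t_*)\le c:=f^\eps(x_*,t_*)-g_\eps(x_*,t_*)$ for all $x\in \T^2$, with equality at $x_*$. Setting $\tilde g(x):=g_\eps(x,t_*)+c$, we obtain $f^\eps(\cdot,t_*)\le \tilde g$ on $\T^2$ with touching at $x_*$, and both functions are $C^{1,1}$ at $x_*$. Proposition \ref{prop:comparisonDN} then gives
\[
G(f^\eps)f^\eps(x_*,t_*)\ge G(\tilde g)\tilde g(x_*)=G(g_\eps)g_\eps(x_*,t_*),
\]
where the last equality uses the vertical-shift invariance. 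Combining this with the consistency Proposition \ref{prop:roughtest} applied to $f^\eps$ at $(x_*,t_*)$ as a subsolution and to $g_\eps$ as a supersolution yields
\[
\p_t f^\eps(x_*,t_*)\le -\ka G(f^\eps)f^\eps(x_*,t_*)\le -\ka G(g_\eps)g_\eps(x_*,t_*)\le \p_t g_\eps(x_*,t_*),
\]
contradicting $\p_t f^\eps(x_*,t_*)-\p_t g_\eps(x_*,t_*)=\eta>0$.

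The main technical obstacle is establishing that the sup-convolution $f^\eps$ inherits the viscosity subsolution property for the \textbf{nonlocal} operator $G(f)f$. For local PDEs this is routine because test functions pull back cleanly under translations; here one must instead exploit both the translation invariance of $G(\cdot)\cdot$ in $(x,t)$ and its invariance under adding constants, which together show that each shifted copy $f(\cdot+y,\cdot+s)-\eps^{-2}(|y|^2+s^2)$ is itself a subsolution, and that the pointwise supremum over $(y,s)$ preserves the subsolution property via the standard max-of-subsolutions principle. A secondary, more routine issue is to pin the maximum at a joint Alexandrov point and secure the $C^{1,1}$ regularity at $(x_*,t_*)$ required by Propositions \ref{prop:comparisonDN} and \ref{prop:roughtest}, which is handled by an arbitrarily small quadratic perturbation and standard facts about semi-convex functions. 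Since Propositions \ref{prop:comparisonDN} and \ref{prop:roughtest} are the only dimension-sensitive inputs and have just been established in the 3D setting, the strategy of \cite[Theorem 6.3]{DGN} transports to the present setting essentially unchanged.
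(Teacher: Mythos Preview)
Your approach is correct and coincides with the paper's: the paper does not spell out a proof but simply states that, with the consistency result (Proposition \ref{prop:roughtest}) in hand, the sup/inf-convolution technique from \cite[Theorem 6.3]{DGN} carries over, which is exactly the argument you have outlined. Your identification of the two invariances (spatial/temporal translation and vertical shift $G(h+c)(h+c)=G(h)h$) as the mechanism behind the ``supremum of subsolutions is a subsolution'' step for this nonlocal operator is the key structural point, and the remaining steps---Alexandrov points of semi-convex regularizations, the touching comparison via Proposition \ref{prop:comparisonDN}, and the contradiction from consistency---are precisely those of \cite{DGN}.
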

The comparison principle together with the translation invariance of \eqref{Muskat:DN} implies the following maximum principles.
\begin{coro}[{\bf Maximum principles}]\label{prop:maxslop}
Let  $f$ be a  viscosity solution of \eqref{Muskat:DN}. If  $f(\cdot, 0)$ has the modulus of continuity $\mu:\Rr^+\to \Rr^+$ then so does $f(\cdot, t)$ for all $t\in [0, T]$. That is,
\bq\label{max:moduli}
|f(x, t)-f(y, t)|\le \mu(|x-y|)\quad\forall (x, t)\in \T^2\times [0, T].
\eq
In particular, for $\mu(z)=z\| \na f(\cdot, 0)\|_{L^\infty(\T^2)}$ we have
\bq\label{max:slope}
\| \na f(\cdot, t)\|_{L^\infty(\T^2)}\le \| \na f(\cdot, 0)\|_{L^\infty(\T^2)}\quad\forall t\in [0, T].
\eq
Finally, if $f_1$ and $f_2$ are viscosity solutions  of \eqref{Muskat:DN} on $[0, T]$, then
\bq
\label{max:amp}
\| f_1(\cdot, t)-f_2(\cdot, t)\|_{L^\infty(\T^2)}\le \| f_1(\cdot, 0)-f_2(\cdot, 0)\|_{L^\infty(\T^2)}\quad\forall t\in [0, T].
\eq
\end{coro}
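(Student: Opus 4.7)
The plan is to derive all three maximum principles from the comparison principle (Theorem \ref{theo:comparison:viscosity}) by exploiting two symmetries of the equation $\p_t f = -\ka G(f)f$: invariance under horizontal translation $f(x,t)\mapsto f(x+h,t)$ and invariance under vertical translation $f(x,t)\mapsto f(x,t)+c$. The first follows because $\Omega_{f(\cdot+h)}$ is the horizontal translate of $\Omega_f$, so the harmonic extension and the unit normal translate accordingly, yielding $(G(f(\cdot+h))f(\cdot+h))(x)=(G(f)f)(x+h)$. The second follows from $G(f)c=0$ (the constant $c$ is its own harmonic extension) combined with linearity of $G(f)$ in its argument, giving $G(f+c)(f+c)=G(f)f$.

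Next I would verify that each of these symmetries preserves the class of viscosity solutions. If $\psi$ is an admissible test function touching $g(x,t):=f(x+h,t)$ from above at $(x_0,t_0)$, then $\wt\psi(x,t):=\psi(x-h,t)$ has the same regularity and touches $f$ from above at $(x_0+h,t_0)$; the subsolution inequality \eqref{ineq:defviscosity} for $f$ at $(x_0+h,t_0)$ transfers to $\psi$ at $(x_0,t_0)$ by the equivariance $G(\wt\psi)\wt\psi(x_0+h,t_0)=G(\psi)\psi(x_0,t_0)$. Similarly, if $\psi$ touches $F(x,t):=f(x,t)+c$ from above at $(x_0,t_0)$, then $\psi-c$ touches $f$ from above there, and $\p_t(\psi-c)=\p_t\psi$, $G(\psi-c)(\psi-c)=G(\psi)\psi$ transfer the inequality. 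Thus horizontal and vertical translates of viscosity solutions are viscosity solutions.

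With these symmetries in hand, I would prove \eqref{max:moduli} as follows. Fix $h\in\T^2$. The function $g(x,t):=f(x+h,t)$ is a viscosity solution, and so is $F(x,t):=f(x,t)+\mu(|h|)$. At $t=0$ the hypothesis on the modulus gives $g(x,0)\le F(x,0)$, so Theorem \ref{theo:comparison:viscosity} (applied with $g$ as subsolution and $F$ as supersolution, each being both) yields $f(x+h,t)-f(x,t)\le \mu(|h|)$ for all $t\in[0,T]$. Replacing $h$ by $-h$ gives the matching lower bound, and taking the supremum over $x$ proves \eqref{max:moduli}. The Lipschitz bound \eqref{max:slope} is the special case $\mu(z)=z\|\na f_0\|_{L^\infty}$. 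For the contraction \eqref{max:amp}, set $c:=\|f_1(\cdot,0)-f_2(\cdot,0)\|_{L^\infty}$; then $f_2+c$ is a viscosity solution with $f_1(\cdot,0)\le f_2(\cdot,0)+c$, so Theorem \ref{theo:comparison:viscosity} gives $f_1\le f_2+c$ on $\T^2\times[0,T]$, and the reverse inequality follows by exchanging $f_1$ and $f_2$.

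The main point requiring care is the equivariance check for viscosity solutions under the two symmetries; however, this is a routine bookkeeping exercise given the already-established translation invariance of $G(\cdot)(\cdot)$ and the fact that shifts preserve the regularity class $\p_t\psi\in C$, $\psi\in C((0,T);C^{1,1}(\T^2))$. Once the symmetries are in place the entire corollary is an immediate application of Theorem \ref{theo:comparison:viscosity}, so no further analytic input is needed.
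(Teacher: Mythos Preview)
Your proof is correct and follows exactly the approach indicated in the paper, which states just before the corollary that ``the comparison principle together with the translation invariance of \eqref{Muskat:DN} implies the following maximum principles'' without further detail. You have simply fleshed out this one-line justification by making explicit the horizontal and vertical translation invariances and verifying they preserve the viscosity-solution property before applying Theorem~\ref{theo:comparison:viscosity}.
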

In order to construct the viscosity solution of \eqref{Muskat:DN} and proving that it satisfies the equation pointwise, we employ the vanishing viscosity limit approach. Precisely, we prove  global regularity for the following   regularized problem
\bq\label{Muskate}
\p_t f=-\ka G(f)f+\eps\Delta f,\quad \eps\in (0, 1),\quad f(x, 0)=f_0(x).
\eq
We shall refer to \eqref{Muskate} as the $\eps$-Muskat equation.
\begin{theo}\label{theo:Muskate}
For any $f_0\in H^s(\T^2)$ with $s\ge 3$, there exists a unique global solution $f$ to \eqref{Muskate} with
\bq
f\in C([0, T]; H^s)\cap L^2([0, T]; H^{s+1})\quad\forall T>0.
\eq
Moreover, $f$ satisfies the maximum principles
\bq\label{max:Muskate}
\| f(t)\|_{L^\infty(\T^2)}\le \| f_0\|_{L^\infty(\T^2)},\quad \| f(t)\|_{W^{1, \infty}(\T^2)}\le \| f_0\|_{W^{1, \infty}(\T^2)}\quad\forall t>0.
\eq
\end{theo}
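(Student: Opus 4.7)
My plan is to split the proof into four steps: local existence for smooth initial data, $L^\infty$ and $W^{1,\infty}$ maximum principles for smooth solutions, propagation of high Sobolev regularity using the a priori estimates developed later in the paper, and finally uniqueness.

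\textbf{Step 1 (Local existence).} Because the viscous term $\eps\Delta$ makes \eqref{Muskate} uniformly parabolic, I would first construct a local-in-time solution via a Picard-type iteration based on the heat semigroup $e^{\eps t\Delta}$, setting $f^{n+1}=e^{\eps t\Delta}f_0-\ka\int_0^t e^{\eps(t-s)\Delta}G(f^n)f^n\,ds$. Using Propositions~\ref{prop:estDN} and \ref{theo:contraDN} together with the parabolic smoothing estimate $\|e^{\eps t\Delta}g\|_{H^{s+1}}\lesssim(\eps t)^{-1/2}\|g\|_{H^s}$, the iteration contracts in $C([0,T_0];H^s)\cap L^2([0,T_0];H^{s+1})$ for a short time $T_0=T_0(\eps,\|f_0\|_{H^s})>0$; in particular the expected $L^2_tH^{s+1}$ regularity comes for free from the parabolic gain. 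The resulting solution extends to a maximal existence time $T^\ast\in(0,\infty]$, with the standard blow-up criterion that if $T^\ast<\infty$ then $\limsup_{t\uparrow T^\ast}\|f(t)\|_{H^s}=\infty$.

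\textbf{Step 2 (Maximum principles).} For the $L^\infty$ bound I would evaluate the equation at a point $x_0$ where $f(\cdot,t_0)$ attains its maximum. At such a point $\Delta f(x_0,t_0)\le0$, and comparing $f(\cdot,t_0)$ from above with the constant $c=f(x_0,t_0)$, Proposition~\ref{prop:comparisonDN} gives $G(f(t_0))f(t_0)(x_0)\ge G(c)c(x_0)=0$; therefore $\frac{d}{dt}\|f\|_{L^\infty}\le0$. For the Lipschitz bound I exploit the translation- and constant-invariance of \eqref{Muskate} (note $G(f+c)(g+c)=G(f)g$ and $\Delta$ annihilates constants): for each $h\in\T^2$, both $f_1(x,t)=f(x+h,t)$ and $f_2(x,t)=f(x,t)+|h|\|\na f_0\|_{L^\infty}$ solve \eqref{Muskate} with $f_1(\cdot,0)\le f_2(\cdot,0)$. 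The comparison principle for \eqref{Muskate} (which follows from Proposition~\ref{prop:comparisonDN} together with the standard parabolic touching argument applied to $\eps\Delta-\ka G(\cdot)$) then yields $f_1\le f_2$ for all time, which after taking $\sup_{h}$ gives \eqref{max:Muskate}.

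\textbf{Step 3 (Global existence).} With the uniform bound $\|f(t)\|_{W^{1,\infty}}\le\|f_0\|_{W^{1,\infty}}$ in hand, the $H^2$ and $H^3$ a priori estimates announced in Sections~\ref{section:H2est}--\ref{section:H3} close without any smallness condition: they provide a constant $C=C(\|f_0\|_{W^{1,\infty}})$ such that $\frac{d}{dt}\|f\|_{H^3}^2+\eps\|f\|_{H^4}^2\le C(1+\|f\|_{H^3}^{2+\kappa})$ on any interval of existence, whence Grönwall gives an $H^3$ bound on arbitrary compact time intervals. For $s>3$ I would bootstrap using the tame estimate \eqref{est:DN}, differentiating the equation $s-3$ times and pairing with the viscous term, obtaining $\frac{d}{dt}\|f\|_{H^s}^2+\eps\|f\|_{H^{s+1}}^2\le\cF(\|f\|_{H^3})\|f\|_{H^s}^2$. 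This rules out finite-time blowup of $\|f(t)\|_{H^s}$ and forces $T^\ast=\infty$.

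\textbf{Step 4 (Uniqueness).} For two solutions $f_1,f_2$ with the same data, I subtract the equations and test with $f_1-f_2$ in $L^2$: the $\eps\Delta$ term yields a positive dissipation $\eps\|\na(f_1-f_2)\|_{L^2}^2$, while the contraction estimate in Proposition~\ref{theo:contraDN} (with $\sigma=1$, $s_0=3$) controls $\|G(f_1)f_1-G(f_2)f_2\|_{H^{-1}}\le\cF(\|(f_1,f_2)\|_{H^3})\|f_1-f_2\|_{L^2}$ so that Grönwall closes. The main obstacle in the whole argument is Step~3: the $H^3$ estimate is genuinely critical in three dimensions, and closing it is exactly the content of Sections~\ref{section:H2est} and \ref{section:H3}, so I would simply invoke those results here once the maximum principles of Step~2 are established.
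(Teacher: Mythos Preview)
Your proposal is correct and matches the paper's approach: the paper itself omits the existence and uniqueness arguments as ``standard'' and devotes all the work to the a priori $H^2$ and $H^3$ estimates in Sections~\ref{section:H2est}--\ref{section:H3}, which you rightly identify as the substantive content and invoke. One small imprecision: in Step~4 your claimed bound $\|G(f_1)f_1-G(f_2)f_2\|_{H^{-1}}\le\cF\|f_1-f_2\|_{L^2}$ does not follow from Proposition~\ref{theo:contraDN} as stated (it requires $\sigma\ge\tfrac12$); the clean fix is to split $G(f_1)f_1-G(f_2)f_2=[G(f_1)-G(f_2)]f_1+G(f_2)(f_1-f_2)$, drop the second piece by positivity of $G(f_2)$, and use Proposition~\ref{theo:contraDN} with $\sigma=1$ on the first piece to get a term controlled by $\|f_1-f_2\|_{L^2}\|f_1-f_2\|_{H^1}$, which the viscosity absorbs.
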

We observe that, on one hand, the added Laplacian $\eps \Delta$ retains the maximum principle \eqref{max:Muskate} satisfied by smooth solutions of \eqref{Muskat:DN}. On the other hand, at the $L^2$ level it only furnishes the $L^2_t H^1_x$ estimate which is weaker than $L^\infty_tW^{1, \infty}_x$ given by \eqref{max:Muskate}.  Therefore, the Laplacian  can only yield better control at higher Sobolev regularity which  in turn requires higher Sobolev estimates  for $G(f)f$. This is a delicate task since the available  Lipschitz control is critical for the operator.  We shall derive global $H^2$ estimates in Section \ref{section:H2est}, then global $H^3$ and higher $H^s$ estimates in Section \ref{section:H3}. The existence and uniqueness follow from a priori estimates and standard arguments and hence shall be omitted.
\section{\texorpdfstring{$H^2$}{} estimate for \texorpdfstring{$\eps$}{}-Muskat}\label{section:H2est}
 We first apply  Proposition \ref{prop:reformDN_b} with $g=\ka f$ to express  the nonlinearity $\ka G(f)f$ as
\bq\label{kG(f)f}
\ka G(f)f=G(f)(\ka f)=\int_{\T^2}\na_{x,z}\Gamma(x-x', f(x)-f(x'))\wp w(x')\cdot N(x)dx',
\eq
where
\bq\label{Fw}
w(x)=\p_2\tt(x)T_1(x)-\p_1\tt(x)T_2(x),\quad\quad T_j(x)=\partial_j(x,f(x,t)),
\eq
and $\theta=(\mez I+K)^{-1}(\ka f)$, i.e.
\bq\label{thetaeq}
\frac12\theta(x)+\, p.v.\int_{\T^2}\nabla_{x,z} \Gamma(x-x',f(x)-f(x'))\cdot N(x')\theta(x')dx'=-\ka f(x).
\eq
Moreover, $\tt$ and $w$ satisfy the relations
\bq\label{d1theta}
\frac12\partial_1\theta(x)-\int_{\T^2}\nabla_{x,z} \Gamma(x-x',f(x)-f(x'))\wedge w(x')dx'\cdot T_1(x)=-\ka\partial_1f(x)
\eq
and
\bq\label{d2theta}
\frac12\partial_2\theta(x)-\int_{\T^2}\nabla_{x,z} \Gamma(x-x',f(x)-f(x'))\wedge w(x')dx'\cdot T_2(x)=-\ka\partial_2f(x).
\eq
We shall make use of the following representation of the fundamental solution $\Gamma$ of the Laplace equation on $\T^2\times\Rr$:
\begin{equation}
	\label{eq2.25:0}
	\Gamma(x,z)=\frac{-\eta_1(x,z)}{4\pi \sqrt{|x|^2+z^2}}+\eta_2(x,z)+\eta_3(x,z)|z|,
\end{equation}
where the $\eta_j$'s are smooth functions satisfying
\begin{itemize}
	\item $\eta_1$ is supported in $B_1$ and equals to $1$ in $B_{1/2}$;
	\item $\eta_2$ decays exponentially along with its derivatives as $|z|\to \infty$;
	\item  $\eta_3$ is supported in $\{|z|\ge 1\}$ and equals to $1/2$ when $|z|\ge 10$.
\end{itemize}
The preceding representation is proven in Lemma \ref{lemA1}. In this and the next section, to derive Sobolev a priori estimates, we shall assume that $f$ is a smooth  global solution satisfying the maximum principles in \eqref{max:Muskate}. Then by virtue of Proposition \ref{prop:W1p} we can choose a number  $r>2$  depending only on $ \| f_0\|_{W^{1, \infty}(\T^2)}$ such that
\bq\label{NablaThetaLr}
\|\theta\|_{W^{1,p}}\leq C(\|f\|_{W^{1,\infty}}) \|f\|_{W^{1,p}}\quad \forall p\in (1, r].
\eq
Without loss of generality we choose $r$ to be close to $2$.  By Morrey's inequality, \eqref{NablaThetaLr} allows us to control the H\"older norm of $\tt$,
\bq\label{tt:Holder}
\|\theta\|_{C^{1-\frac{2}{p}}}\leq C(\|f\|_{W^{1,\infty}}) \|f\|_{W^{1,p}}\quad\forall p\in (2, r].
\eq
Moreover, in view of \eqref{Fw},  \eqref{NablaThetaLr} implies the $L^p$ bound for $w$
\bq\label{w:Lp}
\| w\|_{L^p}\le C(\|f\|_{W^{1,\infty}})\| f\|_{W^{1, p}}\quad \forall p\in (1, r].
\eq
A priori estimates for the $\eps$-Muskat problem \eqref{Muskate} starts with the following $L^2$ estimate, which follows at once from the positivity of $G(f)$,
\bq\label{fL2bound}
\|f(t)\|^2_{L^2}+2\eps\int_0^t\|\nabla f(\tau)\|^2_{L^2}d\tau\leq \|f_0\|^2_{L^2}.
\eq
For the $H^1$ estimate, we note that
\begin{align*}
	\mez \frac{d}{dt}\|\na f(t)\|_{L^2}^2
	&=\ka\int\Delta f G(f)f dx-\eps\|\Delta f\|^2_{L^2}\\
	&\leq \ka\|\Delta f\|_{L^2}\|G(f)f\|_{L^2}-\eps\|\na^2 f\|^2_{L^2}
\end{align*}
and  recall from \eqref{DN:Lp} that
$$
\|G(f)f\|_{L^2}\leq C(\|f\|_{W^{1,\infty}})\|\nabla f\|_{L^2}.
$$
Then using Young's inequality, we find
\begin{align*}
	\mez \frac{d}{dt}\|\na f(t)\|_{L^2}^2\le  \frac1\eps C(\|f\|_{W^{1,\infty}})\|\nabla f\|^2_{L^2}-\frac{\eps}2\|\nabla^2 f\|^2_{L^2},
\end{align*}
where $C$ depends only on $\ka$. Integrating in time and invoking the $L^2$ estimate \eqref{fL2bound}, we obtain the $H^1$ estimate
\bq\label{NablafL2bound}
\|\nabla f(t)\|^2_{L^2}+\eps\int_0^t\|\nabla^2 f(\tau)\|^2_{L^2}d\tau\leq \|\nabla f_0\|^2_{L^2}+\eps^{-2}C(\|f_0\|_{W^{1,\infty}})\|f_0\|_{L^2}^2,
\eq
where we have used the maximum principle \eqref{max:Muskate}.

The remainder of this section is devoted to the $H^2$ estimate for \eqref{Muskate}. Since
\begin{align}
	\begin{split}\label{evolH2}
	\mez\frac{d}{dt} \| \Delta f\|_{L^2}^2&=-\int \Delta f\Delta (\ka G(f)f)dx+\eps\int \Delta f\Delta ^2f dx\\
	&\leq \frac1{2\eps}\|\nabla(\ka G(f)f)\|_{L^2}^2-\frac\eps2\|\nabla^3 f\|^2_{L^2},
\end{split}
\end{align}
it amounts to control the $L^2$ norm of $\na(G(f)f)$. To this end we  use  the formulas \eqref{kG(f)f}  and \eqref{eq2.25:0} for $G(f)f$  and split
\bq\label{IandR1}
\|\nabla(\ka G(f)f)\|_{L^2}=I+R_1,
\eq
where $I$ is  most singular term
$$
I=\Big\|\nabla_x\int\frac{(x',\delta_{x'}f(x))\wedge w(x-x')}{4\pi[|x'|^2+(\delta_{x'}f(x))^2]^{3/2}}\eta_1(x',\delta_{x'}f(x))\cdot N(x)dx'\Big\|_{L^2_x},
$$
 while $R_1$  gathers the remaining terms involving $\nabla\eta_1$, $\nabla^2\eta_1$,  $\eta_2$, $\nabla\eta_2$, $\nabla^2\eta_2$, $\eta_3$, $\nabla\eta_3$ and $\nabla^2\eta_3$. We have introduced the notation
 \bq
 \delta_{x'}f(x)=f(x)-f(x-x').
 \eq
 With the properties of the $\eta_j$'s, the integrals in $R_1$ are not singular and can be controlled by
  \bq\label{R1b}
R_1\leq C(\|f\|_{W^{1,\infty}})\|w\|_{L^2}\|\Delta f\|_{L^2}\leq C(\|f\|_{W^{1,\infty}})\|\Delta f\|_{L^2},
\eq
where we have invoked \eqref{w:Lp}. To control the singular term $I$, we further split
\bq\label{Isplit}
I\leq I_1+I_2+I_3+I_4+I_5,
\eq
where
$$
I_1=\Big\|\int\frac{(x',\delta_{x'}f(x))\wedge w(x-x')}{4\pi[|x'|^2+(\delta_{x'}f(x))^2]^{3/2}}\partial_z\eta_1(x',\delta_{x'}f(x))\delta_{x'}\nabla f(x)\cdot N(x)dx'\Big\|_{L^2_x},
$$
$$
I_2=\Big\|\int\frac{(x',\delta_{x'}f(x))\wedge w(x-x')}{4\pi[|x'|^2+(\delta_{x'}f(x))^2]^{3/2}}\eta_1(x',\delta_{x'}f(x))\cdot \nabla N(x)dx'\Big\|_{L^2_x},
$$
$$
I_3=\Big\|\int\frac{(0,\delta_{x'}\nabla f(x))\wedge w(x-x')}{4\pi[|x'|^2+(\delta_{x'}f(x))^2]^{3/2}}\eta_1(x',\delta_{x'}f(x))\cdot N(x)dx'\Big\|_{L^2_x},
$$
$$
I_4=3\Big\|\int\frac{(x',\delta_{x'}f(x))\wedge w(x-x')}{4\pi[|x'|^2+(\delta_{x'}f(x))^2]^{5/2}}\delta_{x'}f(x)\delta_{x'}\nabla f(x)\eta_1(x',\delta_{x'}f(x))\cdot N(x)dx'\Big\|_{L^2_x},
$$
$$
I_5=\Big\|\int\frac{(x',\delta_{x'}f(x))\wedge \nabla w(x-x')}{4\pi[|x'|^2+(\delta_{x'}f(x))^2]^{3/2}}\eta_1(x',\delta_{x'}f(x))\cdot N(x)dx'\Big\|_{L^2_x}.
$$
Since $\partial_z\eta_1$ is supported away from the origin, the integral in $I_1$ is not singular and using \eqref{w:Lp} gives
\begin{equation}\label{I1B}
	I_1\leq C(\|f\|_{W^{1,\infty}})\|w\|_{L^2}\|\nabla f\|_{L^2}\leq C(\|f\|_{W^{1,\infty}})\|\Delta f\|_{L^2}.
\end{equation}
The terms $I_j$ with $j\in\{2, 3, 4, 5\}$ involve $\eta_1(x',\delta_{x'}f(x))$ which shall be approximated by an even cutoff function $\eta_4(x')$, where
\bq\label{eta4def}
0\leq \eta_4(x)\in C^{\infty}_c(\Rr^2),\quad \mathrm{supp}(\eta_4)\subset B_2,\quad \eta_4=1 ~\mbox{on}~ B_{1}.
\eq
\textbf{Estimates for $I_2$.} Using $\eta_4$ we split
\bq\label{I2split}
I_2\leq I_{2,1}+I_{2,2}+I_{2,3}+I_{2,4},
\eq
where
$$
I_{2,1}=\Big\|\int\frac{(x',\delta_{x'}f(x))\wedge w(x-x')}{4\pi[|x'|^2+(\delta_{x'}f(x))^2]^{3/2}}(\eta_1(x',\delta_{x'}f(x))-\eta_4(x'))\cdot \nabla N(x)dx'\Big\|_{L^2_x},
$$
$$
I_{2,2}=\Big\|\int\frac{(0,\delta_{x'}f(x)-\nabla f(x)\cdot x')\wedge w(x-x')}{4\pi[|x'|^2+(\delta_{x'}f(x))^2]^{3/2}}\eta_4(x')\cdot \nabla N(x)dx'\Big\|_{L^2_x},
$$
$$
I_{2,3}=\Big\|\int(x',\nabla f(x)\cdot x')\wedge w(x\!-\!x')\eta_4(x')\cdot\nabla N(x)A(x,x') dx'\Big\|_{L^2_x}
$$
with
\bq\label{Adef}
A(x,x')=(4\pi)^{-1}[|x'|^2+(\delta_{x'}f(x))^2]^{-\frac32}-(4\pi)^{-1}[|x'|^2+(\nabla f(x)\cdot x')^2]^{-\frac32},
\eq
and
$$
I_{2,4}=\Big\|p.v.\int\frac{(x',\nabla f(x)\cdot x')\wedge w(x-x')}{4\pi[|x'|^2\!+\!(\nabla f(x)\cdot x')^2]^{3/2}}\eta_4(x')\cdot \nabla N(x)dx'\Big\|_{L^2_x}.
$$
Since $\eta_1(x',\delta_{x'}f(x))-\eta_4(x')=0$ for $x'$ near $0$, the integral in $I_{2,1}$ is not singular and is bounded by
$$
I_{2,1}\leq C(\|f\|_{W^{1,\infty}})\|\Delta f\|_{L^2}
$$
as in $I_1$. Using the formula \eqref{Fw} for $w$ to calculate the cross product in $I_{2,2}$, we find
$$
I_{2,2}\leq I_{2,2}^1+I_{2,2}^2,
$$
where
$$
I_{2,2}^1=\Big\|\int\frac{(0,\delta_{x'}f(x)-\nabla f(x)\cdot x',0) \partial_2\theta(x-x')}{4\pi[|x'|^2+(\delta_{x'}f(x))^2]^{3/2}}\eta_4(x')\cdot \nabla N(x)dx'\Big\|_{L^2_x},
$$
$$
I_{2,2}^2=\Big\|\int\frac{(\delta_{x'}f(x)-\nabla f(x)\cdot x',0) \partial_1\theta(x-x')}{4\pi[|x'|^2+(\delta_{x'}f(x))^2]^{3/2}}\eta_4(x')\cdot \nabla N(x)dx'\Big\|_{L^2_x}.
$$
The identity $\partial_{2}\theta(x-x')=\partial_{x'_2}(\delta_{x'}\theta(x))$ allows us to integrate by parts in $I_{2, 2}^1$ and obtain
\bq\label{I221}
I_{2,2}^1\leq I_{2,2}^{1,1}+I_{2,2}^{1,2}+I_{2,2}^{1,3},
\eq
where
$$
I_{2,2}^{1,1}=\Big\|\int\delta_{x'}\theta(x)\frac{(0,\delta_{x'}f(x)-\nabla f(x)\cdot x',0) }{4\pi[|x'|^2+(\delta_{x'}f(x))^2]^{3/2}}\partial_2\eta_4(x')\cdot \nabla N(x)dx'\Big\|_{L^2_x},
$$
$$
I_{2,2}^{1,2}=\Big\|\int\delta_{x'}\theta(x)\frac{(0,\delta_{x'}\partial_2f(x),0) }{4\pi[|x'|^2+(\delta_{x'}f(x))^2]^{3/2}}\eta_4(x')\cdot \nabla N(x)dx'\Big\|_{L^2_x},
$$
$$
I_{2,2}^{1,3}=3\Big\|\!\int\!\delta_{x'}\theta(x)\frac{(0,\delta_{x'}f(x)\!-\!\nabla f(x)\cdot x',0)(x'_2\!+\!\delta_{x'}f(x)\partial_2 f(x\!-\!x')) }{4\pi[|x'|^2+(\delta_{x'}f(x))^2]^{5/2}}\eta_4(x')\!\cdot\!\nabla N(x)dx'\Big\|_{L^2_x}.
$$
Since $\partial_2\eta_4$ is supported away from the origin,  \eqref{tt:Holder} provides the $L^\infty$ bound for $\tt$, and hence
$$
I_{2, 2}^{1,1}\leq C(\|f\|_{W^{1,\infty}})\|\theta\|_{L^\infty}\|\Delta f\|_{L^2}\leq C(\|f\|_{W^{1,\infty}})\|\Delta f\|_{L^2}.
$$
In order to control $I_{2,2}^{1,2}$ we distinguish  $|x'|<\delta$ versus $|x'|>\delta$. For $|x'|<\delta$ we use   the H\"older estimate \eqref{tt:Holder} and the estimate
\bq\label{TVMo2}
|\delta_{x'}f(x)-\nabla f(x)\cdot x'|\leq |x'|^2\int_0^1ds|\nabla^2f(x-sx')|.
\eq
It follows that
\begin{align}
	\begin{split}\label{I2212approach}
	I_{2,2}^{1,2}\leq& \|\theta\|_{\dot{C}^{1-\frac2r}}\Big\|\!\int_0^1ds\int_{|x'|<\delta}\!\frac{
		|\nabla^2 f(x-sx')|}{4\pi|x'|^{1+\frac2r}}dx'|\nabla^2f(x)|\Big\|_{L^2_x}\\
	&	+
	C\|\theta\|_{L^\infty}\|f\|_{W^{1,\infty}}\Big\|\!\int_0^1ds\int_{|x'|>\delta}\!\frac{dx'}{|x'|^{3}}|\nabla^2f(x)|\Big\|_{L^2_x}\\
	\leq&C(\|f\|_{W^{1,\infty}})\delta^{1-\frac2r}\|\nabla^2f\|_{L^4}^2+C(\|f\|_{W^{1,\infty}})\delta^{-1}\|\Delta f\|_{L^2}.
	\end{split}
\end{align}
We recall the Gagliardo-Nirenberg inequality
\bq\label{GNL2Linf}
\|\nabla u\|_{L^p}\leq C_p\|\nabla^2u\|^{1-\frac2p}_{L^2}\|u\|^{\frac2p}_{L^\infty},\quad 4\leq p<\infty.
\eq
Applying \eqref{GNL2Linf} with $u=\na f$ and $p=4$, then taking $\delta^{1-\frac2r}=\eps/(2^{10}C(\|f\|_{W^{1,\infty}}))$, we obtain
$$
I_{2,2}^{1,2}\leq \frac{\eps}{2^{10}}\|\nabla^3f\|_{L^2}+C(\|f\|_{W^{1,\infty}},\eps)\|\Delta f\|_{L^2}.
$$
Next, in view of \eqref{TVMo2},
we see that the above argument for $I_{2,2}^{1,2}$ works for $I_{2,2}^{1,3}$, giving
$$
I_{2,2}^{1,3}\leq \frac{\eps}{2^{10}}\|\nabla^3f\|_{L^2}+C(\|f\|_{W^{1,\infty}},\eps)\|\Delta f\|_{L^2}.
$$
Putting together the above estimates for $I_{2, 2}^{1, j}$ yields
$$
I_{2,2}^{1}\leq \frac{\eps}{2^{9}}\|\nabla^3f\|_{L^2}+C(\|f\|_{W^{1,\infty}},\eps)\|\Delta f\|_{L^2}.
$$
Replacing $\p_2\tt$ by $\p_1\tt$ in $I^1_{2, 2}$ we obtain the same bound for $I_{2,2}^2$, and hence
$$
I_{2,2}\leq \frac{\eps}{2^{8}}\|\nabla^3f\|_{L^2}+C(\|f\|_{W^{1,\infty}},\eps)\|\Delta f\|_{L^2}.
$$
Moving to the term $I_{2,3}$ we use  \eqref{Fw} and the identity $\partial_2T_1-\partial_1T_2=0$ to have
\bq\label{ibp:w}
\begin{aligned}
w(x-x')&=\partial_{x'_2}(\delta_{x'}\theta(x))T_1(x-x')-\partial_{x'_1}(\delta_{x'}\theta(x))T_2(x-x')\\
&=\partial_{x'_2}\left(\delta_{x'}\theta(x)T_1(x-x')\right)-\partial_{x'_1}\left(\delta_{x'}\theta(x)T_2(x-x')\right).
\end{aligned}
\eq
This allows us to integrate by parts in  $I_{2, 3}$. When no $x'$-derivative  hits $A(x,x')$ in \eqref{Adef}, we use the estimate
\bq\label{Abound}
|A(x,x')|\leq C\frac{|\delta_{x'}f(x)-\nabla f(x)\cdot x'|}{|x'|^4}\leq \frac{C}{|x'|^2}\int_0^1ds|\nabla^2f(x-sx')|
\eq
to bound the term as in $I_{2,2}^{1,2}$  \eqref{I221} with the approach in \eqref{I2212approach}. Denoting
$$
a(x,x')=[|x'|^2+(\nabla f(x)\cdot x')^2]^{1/2}\quad \mbox{and}\quad b(x,x')=[|x'|^2+(\delta_{x'}f(x))^2]^{1/2},
$$
 we write  $A(x,x')$ as
$$
A(x,x')=\frac{a^4(x,x')+a^2(x,x')b^2(x,x')+b^4(x,x')}{4\pi a^3(x,x')b^3(x,x')[a^3(x,x')+b^3(x,x')]}(\nabla f(x)\cdot x'+\delta_{x'}f(x))(\nabla f(x)\cdot x'-\delta_{x'}f(x))
$$
to obtain
\begin{align*}
|\nabla_{x'}A(x,x')|\leq& C(\|\nabla f\|_{L^\infty})\frac{|\nabla f(x)\cdot x'-\delta_{x'}f(x)|+|x'||\nabla f(x)-\nabla f(x-x')|}{|x'|^5}\\
\leq& \frac{C(\|\nabla f\|_{L^\infty})}{|x'|^3}\int_0^1ds|\nabla^2f(x-sx')|.
\end{align*}
This allows us to handle the term in which the $x'$-derivatives hit $A(x,x')$ similarly to $I_{2,2}^{1,2}$. We conclude that
$$
I_{2,3}\leq \frac{\eps}{2^{8}}\|\nabla^3f\|_{L^2}+C(\|f\|_{W^{1,\infty}},\eps)\|\Delta f\|_{L^2}.
$$
The term $I_{2,4}$ can be rewritten as
\bq\label{I24sioForm}
I_{2,4}=\|S(w)\cdot \nabla N\|_{L^2},
\eq
where $S$ a singular integral operator with odd kernel. Thus the method of rotation \cite{Duo, CordobaGancedo2007} yields
\bq\label{bound:Sop}
\|S\|_{L^p\to L^p}\leq C_p \sup_{x',x\in\T^2}\Big|\frac{(\frac{x'}{|x'|},\nabla f(x)\cdot \frac{x'}{|x'|})}{4\pi[1+(\nabla f(x)\cdot \frac{x'}{|x'|})^2]^{3/2}}\Big|\leq \frac{C_p}{4\pi},\quad 1<p<\infty.
\eq
Since $r>2$ and $r$ is close to $2$, we can choose $q>4$ such that  $\frac1r+\frac1{q}=\frac12$, $4\ll q <\infty$. Then we can apply \eqref{bound:Sop} with $p=r$ and apply \eqref{GNL2Linf} with $p=q$ to have
\bq\label{I24b}
I_{2,4}\leq C\|w\|_{L^r}\|\nabla^2 f\|_{L^q}\leq C(\|f\|_{W^{1,\infty}})\|\nabla^3 f\|^{2/r}_{L^2}.
\eq
Since $r>2$, Young's inequality gives
$$
I_{2,4}\le  \frac{\eps}{2^{8}}\|\nabla^3f\|_{L^2}+C(\|f\|_{W^{1,\infty}},\eps).
$$
We note that the $W^{1, 2+\eps}$ estimate \eqref{NablaThetaLr} is crucial in the preceding argument. Gathering the above estimates for the $I_{2,j}$ terms, we arrive that the following estimate for $I_2$,
\bq\label{I2b}
I_{2}\leq  \frac{\eps}{2^{6}}\|\nabla^3f\|_{L^2}+C(\|f\|_{W^{1,\infty}},\eps)(\|\Delta f\|_{L^2}+1).
\eq

\textbf{Estimates for $I_3$ and $I_4$.}  We approximate the terms $\delta_{x'}\nabla f(x)$ and $\eta_1(x',\delta_{x'}f(x))$ by $\nabla^2f(x)\cdot x'$ and $\eta_4(x')$ respectively, so that
\bq\label{I3split}
I_3\leq I_{3,1}+I_{3,2}+I_{3,3},
\eq
where
$$
I_{3,1}=\Big\|\int\frac{(0,\delta_{x'}\nabla f(x)-\nabla^2f(x)\cdot x')\wedge w(x-x')}{4\pi[|x'|^2+(\delta_{x'}f(x))^2]^{3/2}}(\eta_1(x',\delta_{x'}f(x))-\eta_4(x'))\cdot N(x)dx'\Big\|_{L^2_x},
$$
$$
I_{3,2}=\Big\|\int\frac{(0,\delta_{x'}\nabla f(x)-\nabla^2f(x)\cdot x')\wedge w(x-x')}{4\pi[|x'|^2+(\delta_{x'}f(x))^2]^{3/2}}\eta_4(x')\cdot N(x)dx'\Big\|_{L^2_x},
$$
$$
I_{3,3}=\Big\|\int\frac{(0,\nabla^2f(x)\cdot x')\wedge w(x-x')}{4\pi[|x'|^2+(\delta_{x'}f(x))^2]^{3/2}}\eta_1(x',\delta_{x'}f(x))\cdot N(x)dx'\Big\|_{L^2_x}.
$$
The non-singular term $I_{3,1}$ can be bounded as before by
\begin{equation*}\label{I31b}
I_{3,1}\leq  C(\|f\|_{W^{1,\infty}})\|\Delta f\|_{L^2}.
\end{equation*}
We integrate by parts using \eqref{ibp:w} to have
$$
I_{3,2}\leq I_{3,2}^{1}+I_{3,2}^{2}+I_{3,2}^{3}
$$
with
$$
I_{3,2}^{1}\!=\!\Big\|\!\int\!\delta_{x'}\theta(x)\frac{(\delta_{x'}\nabla f(x)\!-\!\nabla^2f(x)\!\cdot\! x')(\nabla\eta_4(x'),0)}{4\pi[|x'|^2+(\nabla f(x)\cdot x')^2]^{3/2}}\!\cdot\! N(x)dx'\Big\|_{L^2_x},
$$
$$
I_{3,2}^{2}\!=\!\Big\|\!\int\!\delta_{x'}\theta(x)\frac{(\delta_{x'}\nabla^2 f(x),0)\eta_4(x')}{4\pi[|x'|^2+(\delta_{x'}f(x))^2]^{3/2}}\!\cdot\! N(x)dx'\Big\|_{L^2_x},
$$
$$
I_{3,2}^{3}\!=3\Big\|\!\int\!\delta_{x'}\theta(x)\frac{(\delta_{x'}\nabla f(x)\!-\!\nabla^2f(x)\!\cdot\! x')(x'+\delta_{x'}f(x)\nabla f(x\!-\!x'),0)\eta_4(x')}{4\pi[|x'|^2+(\delta_{x'}f(x))^2]^{5/2}}\!\cdot\! N(x)dx'\Big\|_{L^2_x}.
$$
Since $\na \eta_4$ is supported away from the origin, $I_{3,2}^{1}$ can be easily controlled by
\begin{equation*}
	I_{3,2}^{1}\leq  C(\|f\|_{W^{1,\infty}})\|\Delta f\|_{L^2}.
\end{equation*}
Regarding $I_{3,2}^{2}$ we employ an approach similar to \eqref{I2212approach},
\bq\label{I32:small-large}
\begin{aligned}
	I_{3,2}^{2}\leq& C(\|f\|_{W^{1,\infty}})\|\theta\|_{\dot{C}^{1-\frac2r}}\Big\|\int_0^1ds\int_{|x'|<\delta}\!\frac{
		|\nabla^3 f(x-sx')|}{|x'|^{1+\frac2r}}dx'\Big\|_{L^2_x}\\
&	+
	\|f\|_{W^{1,\infty}}\|\theta\|_{L^\infty}\Big(\Big\|\int_{|x'|>\delta}\!\frac{
		|\nabla^2 f(x-x')|}{|x'|^{3}}dx'\Big\|_{L^2_x}+\Big\|\!\int_{|x'|>\delta}\!\frac{
		|\nabla^2 f(x)|}{|x'|^{3}}dx'\Big\|_{L^2_x}\Big)\\
	\leq&C(\|f\|_{W^{1,\infty}})(\delta^{1-\frac2r}\|\nabla^3f\|_{L^2}+\delta^{-1}\|\Delta f\|_{L^2}).
\end{aligned}
\eq
Choosing $\delta^{1-\frac2r}=\eps/(2^{10}C(\|f\|_{W^{1,\infty}}))$ yields
$$
I_{3,2}^{2}\leq \frac{\eps}{2^{10}}\|\nabla^3f\|_{L^2}+C(\|f\|_{W^{1,\infty}},\eps)\|\Delta f\|_{L^2}.
$$
By an analogous argument, $I_{3,2}^{3}$ is controlled by the same right-hand side, so that
$$
I_{3,2}\leq \frac{\eps}{2^{9}}\|\nabla^3f\|_{L^2}+C(\|f\|_{W^{1,\infty}},\eps)\|\Delta f\|_{L^2}.
$$
The term $I_{3,3}$ can be treated similarly to $I_{2}$ in (\ref{Isplit}, \ref{I2b}), giving
$$
I_{3,3}\leq  \frac{\eps}{2^{9}}\|\nabla^3f\|_{L^2}+C(\|f\|_{W^{1,\infty}},\eps)(\|\Delta f\|_{L^2}+1).
$$
Gathering the above estimates for $I_{3,j}$, we obtain
\bq\label{I3b}
I_{3}\leq \frac{\eps}{2^{7}}\|\nabla^3f\|_{L^2}+C(\|f\|_{W^{1,\infty}},\eps)(\|\Delta f\|_{L^2}+1).
\eq
The next term $I_4$ in \eqref{Isplit} can be treated similarly to $I_3$, whence
\bq\label{I4b}
I_{4}\leq \frac{\eps}{2^{7}}\|\nabla^3f\|_{L^2}+C(\|f\|_{W^{1,\infty}},\eps)(\|\Delta f\|_{L^2}+1).
\eq
\textbf{Estimates for $I_5$.} We use the same splitting scheme as in $I_2$ to have
\begin{equation*}
	I_5\leq I_{5,1}+I_{5,2}+I_{5,3}+I_{5,4},
\end{equation*}
where
$$
I_{5,1}=\Big\|\int\frac{(x',\delta_{x'}f(x))\wedge \nabla w(x-x')}{4\pi[|x'|^2+(\delta_{x'}f(x))^2]^{3/2}}(\eta_1(x',\delta_{x'}f(x))-\eta_4(x'))\cdot N(x)dx'\Big\|_{L^2_x},
$$
$$
I_{5,2}=\Big\|\int\frac{(0,\delta_{x'}f(x)-\nabla f(x)\cdot x')\wedge \nabla w(x-x')}{4\pi[|x'|^2+(\delta_{x'}f(x))^2]^{3/2}}\eta_4(x')\cdot N(x)dx'\Big\|_{L^2_x},
$$
$$
I_{5,3}=\Big\|\int(x',\nabla f(x)\cdot x')\wedge \nabla w(x-x')A(x,x')\eta_4(x')\cdot N(x)dx'\Big\|_{L^2_x},
$$
$$
I_{5,4}=\Big\|\int\frac{(x',\nabla f(x)\cdot x')\wedge \nabla w(x-x')}{4\pi[|x'|^2+(\nabla f(x)\cdot x')^2]^{3/2}}\eta_4(x')\cdot N(x)dx'\Big\|_{L^2_x}.
$$
Since $\nabla w(x-x')=-\nabla_{x'}(w(x-x'))$, we can integrate  by parts in $I_{5,1}$, $I_{5,2}$ and $I_{5,3}$, then use an approach similar to the one for $I_{3}$ in \eqref{Isplit}. It yields analogously that
$$
I_{5,1}+I_{5,2}+I_{5,3}\leq \frac{\eps}{2^{7}}\|\nabla^3f\|_{L^2}+C(\|f\|_{W^{1,\infty}},\eps)(\|\Delta f\|_{L^2}+1).
$$
The integral in $I_{5,4}$ is  a singular integral operator as in \eqref{bound:Sop}, hence
\begin{align}
	\begin{split}\label{nablaL2bclaim}
		I_{5,4}&\leq C(\|f\|_{W^{1,\infty}})\|\nabla w\|_{L^2}\\
		&\leq C(\|f\|_{W^{1,\infty}})(\||\nabla\theta||\nabla^2f|\|_{L^2}+\|\Delta\theta\|_{L^2})\\
		&\leq C(\|f\|_{W^{1,\infty}})(\|\nabla\theta\|_{L^r}\|\nabla^2f\|_{L^q}+\|\Delta\theta\|_{L^2})\\
		&\leq C(\|f\|_{W^{1,\infty}})(\|\nabla^3f\|^{2/r}_{L^2}+\|\Delta\theta\|_{L^2}),
	\end{split}
\end{align}
where $\frac1r+\frac1{q}=\frac12$ and we have employed \eqref{NablaThetaLr} (with $p=r$) and \eqref{GNL2Linf} (with $p=q$). We claim that
\bq\label{tt:H2:claim}
\|\Delta\theta\|_{L^2}\leq \delta\|\nabla^3f\|_{L^2}+ C(\|f\|_{W^{1,\infty}},\delta)(\|\Delta f\|_{L^2}+1)\quad\forall \delta\in (0, 1),
\eq
whose proof is postponed to Lemma \ref{DeltathetaL2} below. With this we deduce that
$$
I_{5,4}\leq \frac{\eps}{2^{7}}\|\nabla^3f\|_{L^2}+C(\|f\|_{W^{1,\infty}},\eps)(\|\Delta f\|_{L^2}+1),
$$
which in turn implies
\bq\label{I5b}
I_{5}\leq \frac{\eps}{2^{6}}\|\nabla^3f\|_{L^2}+C(\|f\|_{W^{1,\infty}},\eps)(\|\Delta f\|_{L^2}+1).
\eq
Putting together the estimates \eqref{I1B}, \eqref{I2b}, \eqref{I3b}, \eqref{I4b}, \eqref{I5b} for $I_j$ and  \eqref{R1b} for $R_1$, we arrive at
$$
I+R_1\leq \frac{\eps}{2^{4}}\|\nabla^3f\|_{L^2}+C(\|f\|_{W^{1,\infty}},\eps)(\|\Delta f\|_{L^2}+1).
$$
By interpolating $\|\Delta f\|_{L^2}$ between $ \|\nabla^3f\|_{L^2}$ and $\|\nabla f\|_{L^2}\le C\|\nabla f\|_{L^\infty}$, it follows that
\bq
I+R_1\leq \frac{\eps}{2^{3}}\|\nabla^3f\|_{L^2}+C(\|f\|_{W^{1,\infty}},\eps)(\|\na f\|_{L^2}+1).
\eq
Consequently, \eqref{IandR1} implies
\bq
\frac1{2\eps}\|\nabla(\ka G(f)f)\|_{L^2}^2\le \frac{\eps}{2^{6}}\|\nabla^3f\|_{L^2}^2+C(\|f\|_{W^{1,\infty}},\eps)(\|\na f\|^2_{L^2}+1).
\eq
Then we  insert this into \eqref{evolH2}, integrate in time, use \eqref{fL2bound}  to bound $\int_0^t\|\na f(\tau)\|^2_{L^2}d\tau$, and use the maximum principle \eqref{max:Muskate} for $\| f(t)\|_{W^{1, \infty}}$. We  arrive at   the $H^2$ bound
\bq\label{DeltafL2b}
\|\Delta f(t)\|^2_{L^2}+\frac\eps2\int_0^t\|\nabla^3 f(\tau)\|^2_{L^2}d\tau\leq\|\Delta f_0\|^2_{L^2}+C(\|f_0\|_{W^{1,\infty}},\eps)(\|f_0\|^2_{L^2}+t).
\eq
Finally, we prove the claim \eqref{tt:H2:claim} in the following lemma.
\begin{lemm}\label{DeltathetaL2}
	Consider $\theta$ and $w$	given by \eqref{thetaeq} and  \eqref{Fw} respectively with $f\in H^3$. Then the following estimates hold for any $\delta>0$,
	\bq\label{DeltathetaL2b}
	\|\Delta\theta\|_{L^2}\leq \delta\|\nabla^3f\|_{L^2} +C(\|f\|_{W^{1,\infty}},\delta)(\|\Delta f\|_{L^2}+1),
	\eq
	\bq\label{nablawL2b}
	\|\nabla w\|_{L^2}\leq \delta\|\nabla^3f\|_{L^2} +C(\|f\|_{W^{1,\infty}},\delta)(\|\Delta f\|_{L^2}+1).
	\eq	
\end{lemm}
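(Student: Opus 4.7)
The plan is to establish both estimates simultaneously by extending the Sobolev calculus developed for $\|\nabla(\kappa G(f)f)\|_{L^2}$ in this section to the intermediate quantities $\theta$ and $w$.

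First I estimate $\|\nabla w\|_{L^2}$ by differentiating the algebraic identity \eqref{Fw}:
\[
\nabla w=\nabla\partial_2\theta\,T_1-\nabla\partial_1\theta\,T_2+\partial_2\theta\,\nabla T_1-\partial_1\theta\,\nabla T_2.
\]
The first two terms give $C(\|f\|_{W^{1,\infty}})\|\Delta\theta\|_{L^2}$, and the last pair, of schematic form $\nabla\theta\cdot\nabla^2 f$, is estimated by picking $r\in(2,2+\eps_*]$ from Proposition \ref{prop:W1p} with H\"older dual $q>4$ satisfying $1/r+1/q=1/2$, then combining \eqref{NablaThetaLr} for $\|\nabla\theta\|_{L^r}$ with the Gagliardo-Nirenberg inequality \eqref{GNL2Linf} for $\|\nabla^2 f\|_{L^q}$ and Young's inequality. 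Since the Gagliardo-Nirenberg exponent on $\|\nabla^3 f\|_{L^2}$ is $1-2/q<1$, we extract a $\delta$-small prefactor and obtain
\[
\|\nabla w\|_{L^2}\le C(\|f\|_{W^{1,\infty}})\|\Delta\theta\|_{L^2}+\delta\|\nabla^3 f\|_{L^2}+C(\delta,\|f\|_{W^{1,\infty}}).
\]

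Next I bound $\|\Delta\theta\|_{L^2}$ by solving \eqref{d1theta}, \eqref{d2theta} for $\partial_j\theta$,
\[
\partial_j\theta(x)=-2\kappa\partial_jf(x)+2\int\nabla_{x,z}\Gamma(x-x',\delta_{x'}f(x))\wedge w(x')\,dx'\cdot T_j(x),
\]
applying $\nabla$, and running the five-piece decomposition $I_1,\dots,I_5$ of Section \ref{section:H2est}; the structure is identical to that of $\nabla(\kappa G(f)f)$ save that $N$ is replaced by $T_j$. Every term except the analog of $I_{5,4}$ absorbs into $\delta\|\nabla^3 f\|_{L^2}+C(\delta,\|f\|_{W^{1,\infty}})(\|\Delta f\|_{L^2}+1)$ by the same arguments. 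To handle the $I_{5,4}$-analog without creating a circular bound in terms of $\|\nabla w\|_{L^2}$, I first substitute $\nabla w=\nabla^2\theta\otimes T+\nabla\theta\cdot\nabla^2 f$ inside the singular integral. The $\nabla\theta\cdot\nabla^2 f$ piece is bounded as in the previous step by $\delta\|\nabla^3 f\|_{L^2}+C(\delta)$, while the $\nabla^2\theta\otimes T$ piece is processed by integration by parts in $x'$ to remove one derivative from $\theta(x')$, reducing it to a Calder\'on-Zygmund-type operator acting on $\nabla\theta\in L^2$, with commutator terms of the same kind as already treated. This yields \eqref{DeltathetaL2b}, and substituting it back into the earlier inequality closes \eqref{nablawL2b}.

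The main obstacle is the $I_{5,4}$-type term: a naive method-of-rotation bound on the analog of $I_{5,4}$ produces a factor $C(\|f\|_{W^{1,\infty}})\|\nabla w\|_{L^2}$ with non-small constant, which couples the two estimates in a way that cannot close by shrinking $\delta$ alone. The essential tool for bypassing this is the Dahlberg-Kenig $W^{1,2+\eps_*}$ regularity of $\theta$ from Proposition \ref{prop:W1p}: it enables the H\"older pairing $L^r\times L^q$ with $r>2$, so that every occurrence of $\|\nabla^2 f\|_{L^q}$ interpolates against $\|\nabla^3 f\|_{L^2}$ with exponent strictly less than one, delivering the required $\delta$-smallness through Young's inequality. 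Without this improvement the estimate would be critical and the bootstrap would fail.
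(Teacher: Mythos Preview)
Your first step --- reducing \eqref{nablawL2b} to \eqref{DeltathetaL2b} via the product rule, the $L^r\times L^q$ H\"older pairing, and \eqref{GNL2Linf} --- is exactly how the paper proceeds, and is correct.

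The gap is in your treatment of the $I_{5,4}$-analog for $\|\Delta\theta\|_{L^2}$. You propose to substitute $\nabla w=\nabla^2\theta\otimes T+\nabla\theta\cdot\nabla^2 f$ and then integrate by parts in $x'$ to pull one derivative off $\nabla^2\theta$. But the kernel in $I_{5,4}$ is $(x',\nabla f(x)\cdot x')\,[|x'|^2+(\nabla f(x)\cdot x')^2]^{-3/2}$, which is odd and homogeneous of degree $-2$. Transferring $\partial_{x'_k}$ onto it yields an \emph{even} kernel of degree $-3$, which is hypersingular in two dimensions: there is no p.v.\ cancellation, and the resulting operator is not Calder\'on--Zygmund. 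The alternative of pulling $\partial_{x_k}$ outside the integral only places the expression in $H^{-1}$, not $L^2$. Either way the argument does not close, and the circular dependence $\|\Delta\theta\|\lesssim\|\nabla w\|\lesssim\|\Delta\theta\|$ remains with a constant you cannot make small.

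The paper avoids this entirely by a different decomposition. Instead of keeping $w$ intact and running the $I_1,\dots,I_5$ splitting with $N$ replaced by $T_j$, it first expands the triple scalar product $[\nabla_{x,z}\Gamma\wedge w(x')]\cdot T_j(x)$ algebraically using $w=\partial_2\theta\,T_1-\partial_1\theta\,T_2$. This produces three integrals $J_1,J_2,J_3$ whose integrands have the form (kernel carrying an extra factor $\delta_{x'}\partial_k f$ or $\delta_{x'}f-\nabla f(x)\cdot(x-x')$) times $\partial_i\theta(x')$. Because $\partial_i\theta(x')$ does not depend on $x$, the subsequent $x$-derivative hits only the kernel and $f$, never $\theta$; no $\nabla^2\theta$ or $\nabla w$ is ever generated, and each resulting term is controlled by $\delta\|\nabla^3 f\|_{L^2}+C(\delta,\|f\|_{W^{1,\infty}})(\|\Delta f\|_{L^2}+1)$ using the same toolbox (splitting via $\eta_4$ and $A(x,x')$, the H\"older bound \eqref{tt:Holder}, and the odd-kernel estimate \eqref{bound:Sop}).
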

\begin{proof} For $\frac{1}{r}+\frac{1}{q}=\mez$, it follows from the $L^r$ estimate \eqref{NablaThetaLr} for $\na \tt$ and the interpolation inequality  \eqref{GNL2Linf} that
$$
\|\nabla w\|_{L^2}\leq \|\nabla\theta\|_{L^r}\|\nabla^2f\|_{L^q}+\|f\|_{W^{1,\infty}}\|\Delta\theta\|_{L^2}
\leq C(\|f\|_{W^{1,\infty}})(\|\nabla^3f\|^{2/r}_{L^2}+\|\Delta\theta\|_{L^2}).
$$
Since $r>2$,  \eqref{DeltathetaL2b} implies \eqref{nablawL2b}.
	
For the proof of \eqref{DeltathetaL2b} we use the formula \eqref{d1theta} to get an expression for $\p_j^2\tt$. Without loss of generality we shall only consider  $j=1$. Since
$$
\mez\|\partial_1^2\theta\|_{L^2}\leq \Big\|\partial_{x_1}\Big(\int\nabla_{x,z} \Gamma(x-x',f(x)-f(x'))\wedge w(x')dx'\cdot T_1(x)\Big)\Big\|_{L^2_x}+\ka\|\partial_1^2f\|_{L^2},
$$
it remains to estimate the first term on the right-hand side of the preceding inequality. We write
$$
\Big\|\partial_{x_1}\Big(\int\nabla_{x,z} \Gamma(x-x',f(x)-f(x'))\wedge w(x')dx'\cdot T_1(x)\Big)\Big\|_{L^2_x}\le J+R_2,
$$
where
$$
J=\Big\|\partial_{x_1}\Big(\int\frac{(x-x',f(x)-f(x'))\wedge w(x')}{4\pi[|x-x'|^2+(f(x)-f(x'))^2]^{3/2}}\eta_1(x-x',f(x)-f(x'))dx'\cdot T_1(x)\Big)\Big\|_{L^2_x}
$$
and  $R_2$ contains terms with $\eta_3$, $\nabla\eta_3$, $\nabla^2\eta_3$, $\eta_2$,  $\nabla\eta_2$, $\nabla^2\eta_2$, $\nabla\eta_1$ and $\nabla^2\eta_1$. With the properties of the $\eta_j$'s, the integrals in $R_2$  are not singular and can be controlled by
\bq\label{r2b}
R_2\leq C(\|f\|_{W^{1,\infty}})\|w\|_{L^2}\|\Delta f\|_{L^2}\leq C(\|f\|_{W^{1,\infty}})\|\Delta f\|_{L^2}.
\eq
Using the formula \eqref{Fw} for $w$ to calculate the triple scalar product in $J$, we find
\bq\label{Jsplit}
J\leq J_1+J_2+J_3,
\eq
where
$$
J_1=\Big\|\partial_{x_1}\Big(\int\frac{(x_2-x_2')(\partial_1f(x)-\partial_1f(x'))\partial_2\theta(x')}{4\pi[|x-x'|^2+(f(x)-f(x'))^2]^{3/2}}\eta_1(x-x',f(x)-f(x'))dx'\Big)\Big\|_{L^2_x},
$$
$$
J_2=\Big\|\partial_{x_1}\Big(\int\frac{(x_2-x_2')(\partial_2f(x)-\partial_2f(x'))\partial_1\theta(x')}{4\pi[|x-x'|^2+(f(x)-f(x'))^2]^{3/2}}\eta_1(x-x',f(x)-f(x'))dx'\Big)\Big\|_{L^2_x},
$$
$$
J_3=\Big\|\partial_{x_1}\Big(\int\frac{(f(x)-f(x')-\nabla f(x)\cdot(x-x'))\partial_1\theta(x')}{4\pi[|x-x'|^2+(f(x)-f(x'))^2]^{3/2}}\eta_1(x-x',f(x)-f(x'))dx'\Big)\Big\|_{L^2_x}.
$$
Regarding $J_1$, we take $\p_1$ and make the change the variables $x'\mapsto x-x'$ to obtain
\bq\label{J1split}
J_1\leq J_{1,1}+J_{1,2}+J_{1,3},
\eq
where
$$
J_{1,1}=\Big\|\int\frac{x_2'\delta_{x'}\partial_1f(x)\partial_2\theta(x-x')}{4\pi[|x'|^2+(\delta_{x'}f(x))^2]^{3/2}}\nabla\eta_1(x',\delta_{x'}f(x))\cdot T_1(x)dx'\Big\|_{L^2_x},
$$
$$
J_{1,2}=\Big\|\partial_1^2f(x)\int\frac{x_2'\partial_2\theta(x-x')}{4\pi[|x'|^2+(\delta_{x'}f(x))^2]^{3/2}}\eta_1(x',\delta_{x'}f(x))dx'\Big\|_{L^2_x},
$$
$$
J_{1,3}=3\Big\|\int\frac{x_2'\delta_{x'}\partial_1f(x)\partial_2\theta(x-x')}{4\pi[|x'|^2+(\delta_{x'}f(x))^2]^{5/2}}(x'_1+\delta_{x'}f(x)\partial_1f(x))\eta_1(x',\delta_{x'}f(x))dx'\Big\|_{L^2_x}.
$$
Using the support of $\na \eta_1$, we find
\bq\label{J11b}
J_{1,1}\leq C(\|f\|_{W^{1,\infty}}+1)\|\nabla f\|_{L^2}\|\nabla\theta\|_{L^2}\leq C(\|f\|_{W^{1,\infty}})\|\Delta f\|_{L^2}.
\eq
Next, we split  $J_{1,2}$ similarly to $I_2$ by using  the cutoff function $\eta_4$ and $A(x, x')$ is given by \eqref{Adef},
$$
J_{1,2}=J_{1,2}^1+J_{1,2}^2+J_{1,2}^3,
$$
where
$$
J_{1,2}^1=\Big\|\partial_1^2f(x)\int\frac{x_2'\partial_2\theta(x-x')}{4\pi[|x'|^2+(\delta_{x'}f(x))^2]^{3/2}}(\eta_1(x',\delta_{x'}f(x))-\eta_4(x'))dx'\Big\|_{L^2_x},
$$
$$
J_{1,2}^2=\Big\|\partial_1^2f(x)\int x_2'\partial_2\theta(x-x')A(x,x')\eta_4(x')dx'\Big\|_{L^2_x},
$$
$$
J_{1,2}^3=\Big\|\partial_1^2f(x)\int\frac{x_2'\partial_2\theta(x-x')\eta_4(x')}{4\pi[|x'|^2+(\nabla f(x)\cdot x')^2]^{3/2}}dx'\Big\|_{L^2_x}.
$$
It is readily seen that
$$
J_{1,2}^1\leq C(\|f\|_{W^{1,\infty}})\|\p_2\tt\|_{L^2}\le  C(\|f\|_{W^{1,\infty}})\|\Delta f\|_{L^2}.
$$
The term  $J_{1,2}^2$ can be treated similarly to $I_{2,3}$ in \eqref{I2split}, giving
$$
J_{1,2}^2\leq \frac{\delta}{2^5}\|\nabla^3 f\|_{L^2}+ C(\|f\|_{W^{1,\infty}},\delta)\|\Delta f\|_{L^2}.
$$
On the other hand, $J^3_{1,2}$ can be treated similarly  to $I_{2,4}$ \eqref{I24sioForm}-\eqref{I24b}, so that
$$
J_{1,2}^3\leq C(\|f\|_{W^{1,\infty}})\|\nabla^3 f\|_{L^2}^{2/r}\leq \frac{\delta}{2^5}\|\nabla^3 f\|_{L^2}+ C(\|f\|_{W^{1,\infty}},\delta).
$$
Combining the last three estimates implies
\bq\label{J12b}
J_{1,2}\leq \frac{\delta}{2^4}\|\nabla^3 f\|_{L^2}+ C(\|f\|_{W^{1,\infty}},\delta)(\|\Delta f\|_{L^2}+1).
\eq
As for $J_{1,3}$ we split
$$
J_{1,3}=J_{1,3}^1+J_{1,3}^2+J_{1,3}^3+J_{1,3}^4+J_{1,3}^5,
$$
where
$$
J_{1,3}^1=3\Big\|\int\frac{x_2'\delta_{x'}\partial_1f(x)\partial_2\theta(x-x')}{4\pi[|x'|^2+(\delta_{x'}f(x))^2]^{5/2}}(x'_1+\delta_{x'}f(x)\partial_1f(x))(\eta_1(x',\delta_{x'}f(x))-\eta_4(x'))dx'\Big\|_{L^2_x},
$$
$$
J_{1,3}^2=3\Big\|\int\frac{x_2'(\delta_{x'}\partial_1f(x)-\nabla\partial_1f(x)\cdot x')\partial_2\theta(x-x')}{4\pi[|x'|^2+(\delta_{x'}f(x))^2]^{5/2}}(x'_1+\delta_{x'}f(x)\partial_1f(x))\eta_4(x')dx'\Big\|_{L^2_x},
$$
$$
J_{1,3}^3=3\Big\|\nabla\partial_1f(x)\cdot\int x'\frac{x_2' \partial_2\theta(x-x')(\delta_{x'}f(x)-\nabla f(x)\cdot x')}{4\pi[|x'|^2+(\delta_{x'}f(x))^2]^{5/2}}\eta_4(x')dx'\partial_1f(x)\Big\|_{L^2_x},
$$
$$
J_{1,3}^4=3\Big\|\nabla\partial_1f(x)\cdot\int x' x_2' \partial_2\theta(x-x')(x'_1+\nabla f(x)\cdot x'\partial_1f(x))B(x,x')\eta_4(x')dx'\Big\|_{L^2_x}
$$
with
\bq\label{Bdef}
B(x,x')=(4\pi)^{-1}[|x'|^2+(\delta_{x'}f(x))^2]^{-\frac52}-(4\pi)^{-1}[|x'|^2+(\nabla f(x)\cdot x')^2]^{-\frac52},
\eq
and
$$
J_{1,3}^5=3\Big\|\nabla\partial_1f(x)\cdot\int x'\frac{x_2' (x'_1+\nabla f(x)\cdot x'\p_1f(x)) \partial_2\theta(x-x')}{4\pi[|x'|^2+(\nabla f(x)\cdot x')^2]^{5/2}}\eta_4(x')dx'\Big\|_{L^2_x}.
$$
As before we have
$$
J_{1,3}^1\leq C(\|f\|_{W^{1,\infty}})\|\Delta f\|_{L^2}.
$$
We treat $J_{1,3}^2$ analogously to $I_{2}$ in (\ref{Isplit}, \ref{I2b}) by writing  $\partial_2\theta(x-x')=\partial_{x'_2}\delta_{x'}\theta(x)$ and integrating by parts. This yields
the desired bound
$$
J_{1,3}^2\leq \frac{\delta}{2^6}\|\nabla^3 f\|_{L^2}+ C(\|f\|_{W^{1,\infty}},\delta)(\|\Delta f\|_{L^2}+1).
$$
The terms $J_{1,3}^3$ and $J_{1,3}^4$ are similar to  $I_{2,2}$ and $I_{2,3}$ respectively, hence
$$
J_{1,3}^3+J_{1,3}^4\leq \frac{\delta}{2^5}\|\nabla^3 f\|_{L^2}+ C(\|f\|_{W^{1,\infty}},\delta)\|\Delta f\|_{L^2}.
$$
The integral in $J_{1,3}^5$ is a  singular integral operator with odd kernel, hence we obtain as in \eqref{I24sioForm}-\eqref{I24b} that
$$
J_{1,3}^5\leq C(\|f\|_{W^{1,\infty}})\|\nabla^3 f\|_{L^2}^{2/r}\leq \frac{\delta}{2^6}\|\nabla^3 f\|_{L^2}+ C(\|f\|_{W^{1,\infty}},\delta).
$$
We have proven that
\bq\label{J13b}
J_{1,3}\leq  \frac{\delta}{2^4}\|\nabla^3 f\|_{L^2}+ C(\|f\|_{W^{1,\infty}},\delta)(\|\Delta f\|_{L^2}+1).
\eq
Combining \eqref{J1split},  \eqref{J11b}, \eqref{J12b} and \eqref{J13b}, we obtain
$$
J_{1}\leq  \frac{\delta}{2^3}\|\nabla^3 f\|_{L^2}+ C(\|f\|_{W^{1,\infty}},\delta)(\|\Delta f\|_{L^2}+1).
$$
The terms $J_2$ and $J_3$ are estimated analogously, so that \eqref{Jsplit} yields
$$
J\leq  \frac\delta2\|\nabla^3 f\|_{L^2}+ C(\|f\|_{W^{1,\infty}},\delta)(\|\Delta f\|_{L^2}+1).
$$
This concludes the proof of the lemma.
 \end{proof}

\section{\texorpdfstring{$H^3$}{} and higher Sobolev estimates for \texorpdfstring{$\eps$}{}-Muskat}\label{section:H3}
Throughout this section we define $q$ by
\bq
\frac1r+\frac1q=\frac12,
\eq
where $r$ is as in \eqref{NablaThetaLr}. In particular, we have $1\ll q<\infty$.

Applying $\Delta$ to \eqref{Muskate} and multiplying the resulting equation by $-\Delta^2 f$, we find
\bq\label{evolH3}
\begin{aligned}
		\mez\frac{d}{dt} \| \na \Delta f\|_{L^2}^2&=-\int \Delta^2 f\Delta f_tdx\\
&=\int \Delta^2 f\Delta (\ka G(f)f)dx-\eps\int|\Delta^2 f|^2dx\\
	    &\leq \frac1{2\eps}\|\Delta(\ka G(f)f)\|_{L^2}^2-\frac\eps2\|\Delta^2 f\|^2_{L^2}.
\end{aligned}
\eq
As in the $H^2$ estimate in the preceding section, we split
$$
\|\Delta(\ka G(f)f)\|_{L^2}\leq K+R_3
$$
where
$$
K=\Big\|\Delta_x\int\frac{(x',\delta_{x'}f(x))\wedge w(x-x')}{4\pi[|x'|^2+(\delta_{x'}f(x))^2]^{3/2}}\eta_1(x',\delta_{x'}f(x))\cdot N(x)dx'\Big\|_{L^2_x}
$$
and $R_3$  contains non-singular terms and satisfies
\bq\label{est:R3}
R_3\leq C(\|f\|_{W^{1,\infty}})\|w\|_{L^2}\|\na^3f\|_{L^2}\leq C(\|f\|_{W^{1,\infty}})\|\na^3 f\|_{L^2}.
\eq
We have
\bq\label{Ksplit}
K\leq K_1+...+K_{12},
\eq
where in the first five terms we gather the terms with Laplacian:
$$
K_1=\Big\|\int\frac{(x',\delta_{x'}f(x))\wedge w(x-x')}{4\pi[|x'|^2+(\delta_{x'}f(x))^2]^{3/2}}\Delta_x(\eta_1(x',\delta_{x'}f(x)))\cdot N(x)dx'\Big\|_{L^2_x},
$$
$$
K_2=\Big\|\int\frac{(x',\delta_{x'}f(x))\wedge w(x-x')}{4\pi[|x'|^2+(\delta_{x'}f(x))^2]^{3/2}}\eta_1(x',\delta_{x'}f(x))\cdot \Delta N(x)dx'\Big\|_{L^2_x},
$$
$$
K_3=\Big\|\int\frac{(0,\delta_{x'}\Delta f(x))\wedge w(x-x')}{4\pi[|x'|^2+(\delta_{x'}f(x))^2]^{3/2}}\eta_1(x',\delta_{x'}f(x))\cdot N(x)dx'\Big\|_{L^2_x},
$$
$$
K_4=\Big\|\frac1{4\pi}\int (x',\delta_{x'}f(x))\wedge w(x-x')\Delta_x([|x'|^2+(\delta_{x'}f(x))^2]^{-3/2})\eta_1(x',\delta_{x'}f(x))\cdot N(x)dx'\Big\|_{L^2_x},
$$
and
$$
K_5=\Big\|\int\frac{(x',\delta_{x'} f(x))\wedge \Delta w(x-x')}{4\pi[|x'|^2+(\delta_{x'}f(x))^2]^{3/2}}\eta_1(x',\delta_{x'}f(x))\cdot N(x)dx'\Big\|_{L^2_x}.
$$
The other terms are given by
$$
K_6=\sum_{j=1}^2\Big\|\int\partial_{x_j}\Big(\frac{(x',\delta_{x'}f(x))\wedge w(x-x')}{2\pi[|x'|^2+(\delta_{x'}f(x))^2]^{3/2}}\cdot N(x)\Big)\partial_3\eta_1(x',\delta_{x'}f(x))\delta_{x'}\partial_jf(x)dx'\Big\|_{L^2_x},
$$
$$
K_7=\sum_{j=1}^2\Big\|\int\frac{(0,\delta_{x'}\partial_jf(x))\wedge w(x-x')}{2\pi[|x'|^2+(\delta_{x'}f(x))^2]^{3/2}}\eta_1(x',\delta_{x'}f(x))\cdot \partial_jN(x)dx'\Big\|_{L^2_x},
$$
$$
K_8=3\sum_{j=1}^2\Big\|\int\frac{(x',\delta_{x'}f(x))\wedge w(x-x')}{2\pi[|x'|^2+(\delta_{x'}f(x))^2]^{5/2}}\delta_{x'}f(x)\delta_{x'}\partial_jf(x)\eta_1(x',\delta_{x'}f(x))\cdot \partial_jN(x)dx'\Big\|_{L^2_x},
$$
$$
K_9=\sum_{j=1}^2\Big\|\int\frac{(x',\delta_{x'}f(x))\wedge \partial_jw(x-x')}{2\pi[|x'|^2+(\delta_{x'}f(x))^2]^{3/2}}\eta_1(x',\delta_{x'}f(x))\cdot \partial_jN(x)dx'\Big\|_{L^2_x},
$$
$$
K_{10}=3\sum_{j=1}^2\Big\|\int\frac{(0,\delta_{x'}\partial_j f(x))\wedge w(x-x')}{2\pi[|x'|^2+(\delta_{x'}f(x))^2]^{5/2}}\delta_{x'}f(x)\delta_{x'}\partial_jf(x)\eta_1(x',\delta_{x'}f(x))\cdot N(x)dx'\Big\|_{L^2_x},
$$
$$
K_{11}=\sum_{j=1}^2\Big\|\int\frac{(0,\delta_{x'}\partial_jf(x))\wedge\partial_jw(x-x')}{2\pi[|x'|^2+(\delta_{x'}f(x))^2]^{3/2}}\eta_1(x',\delta_{x'}f(x))\cdot N(x)dx'\Big\|_{L^2_x},
$$
and finally
$$
K_{12}=3\sum_{j=1}^2\Big\|\int\frac{(x',\delta_{x'}f(x))\wedge \partial_jw(x-x')}{2\pi[|x'|^2+(\delta_{x'}f(x))^2]^{5/2}}\delta_{x'}f(x)\delta_{x'}\partial_jf(x)\eta_1(x',\delta_{x'}f(x))\cdot N(x)dx'\Big\|_{L^2_x}.
$$
Since $\na \eta$ is supported sway from the origin, we have
\bq\label{K1b}
K_1 \leq C(\|f\|_{W^{1,\infty}})\|\nabla^3f\|_{L^2}.
\eq
{\bf Estimates for $K_2$.} We split further $K_2$ as
\bq\label{K2split}
K_2\leq K_{2,1}+K_{2,2}+K_{2,3}+K_{2,4},
\eq
where
$$
K_{2,1}=\Big\|\int\frac{(x',\delta_{x'}f(x))\wedge w(x-x')}{4\pi[|x'|^2+(\delta_{x'}f(x))^2]^{3/2}}(\eta_1(x',\delta_{x'}f(x))-1)\cdot \Delta N(x)dx'\Big\|_{L^2_x},
$$
$$
K_{2,2}=\Big\|\int\frac{(0,\delta_{x'}f(x)-\nabla f(x)\cdot x')\wedge w(x-x')}{4\pi[|x'|^2+(\delta_{x'}f(x))^2]^{3/2}}\cdot \Delta N(x)dx'\Big\|_{L^2_x},
$$
$$
K_{2,3}=\Big\|\int(x',\nabla f(x)\cdot x')\wedge w(x-x')A(x,x')\cdot \Delta N(x)dx'\Big\|_{L^2_x},
$$
with $A$ given in \eqref{Adef} and
$$
K_{2,4}=\Big\|\int\frac{(x',\nabla f(x)\cdot x')\wedge w(x-x')}{4\pi[|x'|^2+(\nabla f(x)\cdot x')^2]^{3/2}}\cdot \Delta N(x)dx'\Big\|_{L^2_x}.
$$
It is readily seen that
$$
K_{2,1} \leq C(\|f\|_{W^{1,\infty}})\|\nabla^3f\|_{L^2}.
$$
Next we consider $K_{2, 2}$. Since $r>2$, we can find a number $2^{-}$ smaller than $2$  such that $\frac1p+\frac1{2^-}+\frac1r=1$  for some  $4\ll p<\infty$. The  identity
\bq\label{TVMIo2}
\delta_{x'}f(x)-\nabla f(x)\cdot x'=-\int_0^1s x'\cdot\nabla^2f(x+(s-1)x')\cdot x' ds
\eq
allows us to bound
\begin{align*}
	K_{2,2}&\leq\sup_{x}\int_0^1sds \int|\nabla^2 f(x+(1-s)x')||w(x-x')|\frac{dx'}{|x'|}\|\nabla^3 f\|_{L^2}\\
	&\leq \int_0^1\frac{sds}{(1-s)^{2/p}} \|\nabla^2f\|_{L^p}\||x'|^{-1}\|_{L^{2^-}_{x'}}\|w\|_{L^r}\|\nabla^3 f\|_{L^2}.
\end{align*}
Using the $L^r$ estimate \eqref{w:Lp} and the interpolation inequality \eqref{GNL2Linf} to bound $ \|\nabla^2f\|_{L^p}$, we find
$$
K_{2,2}\leq C(\|f\|_{W^{1,\infty}})\|\nabla^3f\|_{L^2}^{2-\frac2p}.
$$
 An analogous approach for $K_{2,3}$ yields the same bound
$$
K_{2,3}\leq C(\|f\|_{W^{1,\infty}})\|\nabla^3f\|_{L^2}^{2-\frac2p}.
$$
The integral in $K_{2, 4}$ is a singular integral operator with odd kernel, so \eqref{bound:Sop} implies
\begin{align*}
K_{2,4}&\leq C(\|f\|_{W^{1,\infty}})\|\nabla^3f\|_{L^q}\|w\|_{L^r}\leq C(\|f\|_{W^{1,\infty}})\|\Delta^2f\|_{L^2}^{1-\frac2q}\|\nabla^3 f\|_{L^2}^{\frac2q},
\end{align*}
where we have applied the following Gagliardo-Nirenberg inequality
\bq\label{GNnablaL2L2}
\|u\|_{L^p}\leq C_p\|\nabla u\|^{1-\frac2p}_{L^2}\|u\|^{\frac2p}_{L^2},\quad 2\leq p<\infty.
\eq
After using Young's inequality, we obtain
$$
K_{2,4}
\leq \frac{\eps}{2^{9}}\|\Delta^2 f\|_{L^2}+C(\|f\|_{W^{1,\infty}},\eps)\|\nabla^3 f\|_{L^2}.
$$
Gathering the above estimates for $K_{2, j}$ yields
\bq\label{K2b}
K_{2}
\leq \frac{\eps}{2^{9}}\|\Delta^2 f\|_{L^2}+C(\|f\|_{W^{1,\infty}},\eps)(\|\nabla^3 f\|^2_{L^2}+1).
\eq
{\bf Estimates for $K_3$.}  We use $\eta_4$ in \eqref{eta4def} to decompose $K_3$ as
\bq\label{K3split}
K_3\leq K_{3,1}+K_{3,2}+K_{3,3}
\eq
where
$$
K_{3,1}=\Big\|\int\frac{(0,\delta_{x'}\Delta f(x)-\nabla\Delta f(x)\cdot x')\wedge w(x-x')}{4\pi[|x'|^2+(\delta_{x'}f(x))^2]^{3/2}}(\eta_1(x',\delta_{x'}f(x))-\eta_4(x'))\cdot N(x)dx'\Big\|_{L^2_x},
$$
$$
K_{3,2}=\Big\|\int\frac{(0,\delta_{x'}\Delta f(x)-\nabla\Delta f(x)\cdot x')\wedge w(x-x')}{4\pi[|x'|^2+(\delta_{x'}f(x))^2]^{3/2}}\eta_4(x')\cdot N(x)dx'\Big\|_{L^2_x},
$$
$$
K_{3,3}=\Big\|\int\frac{(0,\nabla\Delta f(x)\cdot x')\wedge w(x-x')}{4\pi[|x'|^2+(\delta_{x'}f(x))^2]^{3/2}}\eta_4(x')\cdot N(x)dx'\Big\|_{L^2_x}.
$$
It is easy to see that
\bqa
K_{3,1} \leq C(\|f\|_{W^{1,\infty}})\|\nabla^3f\|_{L^2}.
\eqa
For the next term $K_{3,2}$ we integrate by parts as in $I_{2,2}$ to get
$$
K_{3,2}\leq K_{3,2}^1+K_{3,2}^2+K_{3,2}^3,
$$
where
$$
K_{3,2}^1=\Big\|\int\delta_{x'}\theta(x)\frac{\delta_{x'}\Delta f(x)-\nabla\Delta f(x)\cdot x'}{4\pi[|x'|^2+(\delta_{x'}f(x))^2]^{3/2}}(\nabla\eta_4(x'),0)\cdot N(x)dx'\Big\|_{L^2_x},
$$
$$
K_{3,2}^2=\Big\|\int\frac{\delta_{x'}\theta(x)(\delta_{x'}\nabla\Delta f(x),0)}{4\pi[|x'|^2+(\delta_{x'}f(x))^2]^{3/2}}\eta_4(x')\cdot N(x)dx'\Big\|_{L^2_x},
$$
$$
K_{3,2}^3=3\Big\|\int\!\delta_{x'}\theta(x)(\delta_{x'}\Delta f(x)\!-\!\nabla\Delta f(x)\!\cdot\! x')\frac{(x'\!+\!\delta_{x'}f(x)\nabla f(x\!-\!x'),0)}{4\pi[|x'|^2\!+\!(\delta_{x'}f(x))^2]^{5/2}}\eta_4(x')\!\cdot\! N(x)dx'\Big\|_{L^2_x}.
$$
The support of $K_{3,2}^1$ implies
$$
K_{3,2}^1\leq C(\|f\|_{W^{1,\infty}})\|\nabla^3 f\|_{L^2}.
$$
The same argument as in \eqref{I32:small-large} yields
$$
K_{3,2}^2\leq \frac{\eps}{2^{9}}\|\Delta^2 f\|_{L^2}+C(\|f\|_{W^{1,\infty}},\eps)\|\nabla^3 f\|_{L^2},
$$
and similarly $K_{3, 2}^3$ obeys the same bound.
Therefore, we obtain
$$
K_{3,2}\leq \frac{\eps}{2^{8}}\|\Delta^2 f\|_{L^2}+C(\|f\|_{W^{1,\infty}},\eps)\|\nabla^3 f\|_{L^2}.
$$
The term $K_{3, 3}$ is similar to $K_{2}$, hence
$$
K_{3,3}\leq \frac{\eps}{2^{9}}\|\Delta^2 f\|_{L^2}+C(\|f\|_{W^{1,\infty}},\eps)(\|\nabla^3 f\|_{L^2}^2+1).
$$
Adding the $K_{3,j}$ estimates gives
\bq\label{K3b}
K_{3}\leq \frac{\eps}{2^{7}}\|\Delta^2 f\|_{L^2}+C(\|f\|_{W^{1,\infty}},\eps)(\|\nabla^3 f\|^2_{L^2}+1).
\eq
{\bf Estimates for $K_4$.}  We expand the Laplacian in $K_4$ to obtain
\bq\label{K4split}
K_4\leq K_{4,1}+K_{4,2}+K_{4,3},
\eq
where
$$
K_{4,1}=\Big\|3\int \frac{(x',\delta_{x'}f(x))\wedge w(x-x')}{4\pi[|x'|^2+(\delta_{x'}f(x))^2]^{5/2}}\delta_{x'}f(x)\delta_{x'}\Delta f(x)\eta_1(x',\delta_{x'}f(x))\cdot N(x)dx'\Big\|_{L^2_x},
$$
$$
K_{4,2}=\Big\|3\int \frac{(x',\delta_{x'}f(x))\wedge w(x-x')}{4\pi[|x'|^2+(\delta_{x'}f(x))^2]^{5/2}}|\delta_{x'}\nabla f(x)|^2\eta_1(x',\delta_{x'}f(x))\cdot N(x)dx'\Big\|_{L^2_x},
$$
$$
K_{4,3}=\Big\|15\!\int \frac{(x',\delta_{x'}f(x))\wedge w(x-x')}{4\pi[|x'|^2+(\delta_{x'}f(x))^2]^{7/2}}(\delta_{x'}f(x))^2|\delta_{x'}\nabla f(x)|^2\eta_1(x',\delta_{x'}f(x))\cdot N(x)dx'\Big\|_{L^2_x}.
$$
The term $K_{4,1}$ is analogous to $K_{3}$, whence
\bq\label{K41b}
K_{4,1}\leq \frac{\eps}{2^{7}}\|\Delta^2 f\|_{L^2}+C(\|f\|_{W^{1,\infty}},\eps)(\|\nabla^3 f\|^2_{L^2}+1).
\eq
As for $K_{4,2}$ we split
\bq\label{K42split}
K_{4,2}\les K_{4,2}^1+K_{4,2}^2+K_{4,2}^3+K_{4,2}^4+K_{4,2}^5+K_{4,2}^6,
\eq
where
$$
K_{4,2}^1=\Big\|\int \frac{(x',\delta_{x'}f(x))\wedge w(x-x')}{4\pi[|x'|^2+(\delta_{x'}f(x))^2]^{5/2}}|\delta_{x'}\nabla f(x)|^2(\eta_1(x',\delta_{x'}f(x))-1)\cdot N(x)dx'\Big\|_{L^2_x},
$$
$$
K_{4,2}^2=\Big\|\int \frac{(x',\delta_{x'}f(x))\wedge w(x-x')}{4\pi[|x'|^2+(\delta_{x'}f(x))^2]^{5/2}}(\delta_{x'}\nabla f(x)\!\cdot\!(\delta_{x'}\nabla f(x)\!-\!\nabla^2f(x)\cdot x'))\cdot N(x)dx'\Big\|_{L^2_x},
$$
$$
K_{4,2}^3=\Big\|\int \frac{(x',\delta_{x'}f(x))\wedge w(x-x')}{4\pi[|x'|^2+(\delta_{x'}f(x))^2]^{5/2}}((\delta_{x'}\nabla f(x)\!-\!\nabla^2f(x)\cdot x')\cdot \nabla^2 f(x)\!\cdot\!x')\cdot N(x)dx'\Big\|_{L^2_x},
$$
$$
K_{4,2}^4=\Big\|\int \frac{(0,\delta_{x'}f(x)-\nabla f(x)\!\cdot\! x')\wedge w(x\!-\!x')}{4\pi[|x'|^2\!+\!(\delta_{x'}f(x))^2]^{5/2}}|\nabla^2f(x)\cdot x'|^2\!\cdot\! N(x)dx'\Big\|_{L^2_x},
$$
$$
K_{4,2}^5=\Big\|\int (x',\nabla f(x)\cdot x')\wedge w(x-x')B(x,x')|\nabla^2 f(x)\cdot x'|^2\cdot N(x)dx'\Big\|_{L^2_x}
$$
with $B$ given in \eqref{Bdef}, and
$$
K_{4,2}^6=\Big\|\int \frac{(x',\nabla f(x)\!\cdot\! x')\wedge w(x\!-\!x')}{4\pi[|x'|^2\!+\!(\nabla f(x)\!\cdot\! x')^2]^{5/2}}|\nabla^2f(x)\cdot x'|^2\!\cdot\! N(x)dx'\Big\|_{L^2_x}.
$$
The first term is easily controlled by
$$
K_{4,2}^1\leq C(\|f\|_{W^{1,\infty}})\|w \|_{L^2}\le C(\|f\|_{W^{1,\infty}})\|\nabla^3 f\|_{L^2}.
$$
 Letting $2<p<(\mez-\frac{1}{q})^{-1}$, it follows from  Morrey's and Minkowski's inequalities that
\bq
\label{K422K423b}
\begin{aligned}
	K_{4,2}^2\!+\!K_{4,2}^3\leq&\|\nabla f\|_{L^\infty}\|
	\nabla^2 f\|_{\dot{C}^{1-\frac2p}}\Big\|\int_0^1\!ds\!\int\frac{|w(x\!-\!x')|(
		|\nabla^2 f(x\!-\!sx')|\!+\!|\nabla^2 f(x)|)}{|x'|^{1+\frac2p}}dx'
	\Big\|_{L^2_x}\\
	& \le C\|\nabla f\|_{L^\infty}\|
	\nabla^3 f\|_{L^p}\| w\|_{L^r}\| \na^2f\|_{L^q} \int_0^1\!ds\!\int\frac{1}{|x'|^{1+\frac2p}}dx'\\
	&\le C(\| f\|_{W^{1, \infty}})\|
	\nabla^3 f\|_{L^p}\| \na^2f\|_{L^q}.
\end{aligned}
\eq
Now we apply \eqref{GNL2Linf} with $u=\na^2f$ and apply \eqref{GNnablaL2L2} with $u=\na^3f$ to have
\[
\|\nabla^3 f\|_{L^p}\| \na^2f\|_{L^q}\le C(\| f\|_{W^{1, \infty}})\|\nabla^4 f\|_{L^2}^{1-\frac{2}{p}}\| \nabla^3 f\|_{L^2}^{1-\frac{2}{q}+\frac{2}{p}}.
\]
Using Young's inequality and noting that $\frac{p}{2}(1-\frac{2}{q}+\frac{2}{p})=p(\mez-\frac{1}{q})+1<2$, we deduce
\[
\begin{aligned}
K_{4,2}^2\!+\!K_{4,2}^3&\le \frac{\eps}{2^{8}}\|\na^4 f\|_{L^2}+C(\|f\|_{W^{1,\infty}},\eps)\|\nabla^3 f\|_{L^2}^{\frac{p}{2}(1-\frac{2}{q}+\frac{2}{p})}\\
&\le  \frac{\eps}{2^{8}}\|\Delta^2 f\|_{L^2}+C(\|f\|_{W^{1,\infty}},\eps)(\|\nabla^3 f\|_{L^2}^2+1).
\end{aligned}
\]
Regarding  $K_{4, 2}^4$  we choose  $\frac{2r}{r-2}<p<\infty$ and use \eqref{GNL2Linf} to have
\begin{align}
\begin{split}\label{K424b}
K_{4,2}^4\leq& \|\nabla f\|_{L^\infty}\|\nabla f\|_{C^{1-\frac2p}}\sup_{x}\int\frac{|w(x-x')|}{|x'|^{1+\frac2p}}dx'\|\nabla^2f\|_{L^4}^2\\
&\le \|\nabla f\|^2_{L^\infty}\|\nabla^2f\|_{L^p}\|w\|_{L^r}\||x'|^{-(1+\frac2p)}\|_{L^{\frac r{r-1}}_{x'}}\|\nabla^3f\|_{L^2}\\
&\leq C(\|f\|_{W^{1,\infty}})\|\nabla^3f\|_{L^2}^{2-\frac2p}.
\end{split}
\end{align}
The bound
$$
|B(x,x')|\leq C\frac{|\delta_{x'}f(x)-\nabla f(x)\!\cdot\! x'|}{|x'|^6}
$$
allows us to treat $K_{4,2}^5$ similarly to $K_{4,2}^4$, hence
$$
K_{4,2}^5\leq C(\|f\|_{W^{1,\infty}})\|\nabla^3f\|_{L^2}^{2-\frac2p}.
$$
In $K_{4,2}^6$ we find a singular integral operator as before, so \eqref{bound:Sop} and \eqref{GNL2Linf}  imply
\bq\label{K426b}
K_{4,2}^6\leq C\|\nabla f\|_{L^\infty}\||\nabla^2f|^2\|_{L^q}\|w\|_{L^r}\leq C(\|f\|_{W^{1,\infty}})\|\nabla^3f\|_{L^2}^{2-\frac2q}.
\eq
Combining the above estimates for $K_{4, 2}^j$ leads to
\bq\label{K42b}
K_{4,2}\leq \frac{\eps}{2^{8}}\|\Delta^2 f\|_{L^2}+C(\|f\|_{W^{1,\infty}},\eps)(\|\nabla^3f\|^2_{L^2}+1).
\eq
The term  $K_{4,3}$ can be treated in the same way as $K_{4,2}$, hence we obtain
\bq\label{K4b}
 K_{4}\leq \frac{\eps}{2^{6}}\|\Delta^2 f\|_{L^2}+C(\|f\|_{W^{1,\infty}},\eps)(\|\nabla^3f\|^2_{L^2}+1).
\eq
{\bf Estimates for $K_5$.}  We bound $K_5$ by
\bq\label{K5split}
K_5\leq K_{5,1}+K_{5,2}+K_{5,3}+K_{5,4},
\eq
where
$$
K_{5,1}=\Big\|\int\frac{(x',\delta_{x'} f(x))\wedge \Delta w(x-x')}{4\pi[|x'|^2+(\delta_{x'}f(x))^2]^{3/2}}(\eta_1(x',\delta_{x'}f(x))-\eta_4(x'))\cdot N(x)dx'\Big\|_{L^2_x},
$$
$$
K_{5,2}=\Big\|\int\frac{(0,\delta_{x'} f(x)-\nabla f(x)\cdot x')\wedge \Delta w(x-x')}{4\pi[|x'|^2+(\delta_{x'}f(x))^2]^{3/2}}\eta_4(x')\cdot N(x)dx'\Big\|_{L^2_x},
$$
$$
K_{5,3}=\Big\|\int(x',\nabla f(x)\cdot x')\wedge \Delta w(x-x')A(x,x')\eta_4(x')\cdot N(x)dx'\Big\|_{L^2_x},
$$
$$
K_{5,4}=\Big\|\int\frac{(x',\nabla f(x)\cdot x')\wedge \Delta w(x-x')}{4\pi[|x'|^2+(\nabla f(x)\cdot x')^2]^{3/2}}\eta_4(x')\cdot N(x)dx'\Big\|_{L^2_x}.
$$
In $K_{5, 1}$ we write $\Delta w(x-x')=-\nabla_{x'}\cdot(\nabla w(x-x'))$ and integrate by parts. Since $\eta_1(x',\delta_{x'}f(x))-\eta_4(x')=0$ for $x'$ close to $0$,  using \eqref{nablawL2b} to bound $\| \na w\|_{L^2}$, we obtain
\begin{align*}
K_{5,1}\leq&\sum_{j=1}^2\Big\|\int\partial_{x'_j}\Big(\frac{(x',\delta_{x'} f(x))(\eta_1(x',\delta_{x'}f(x))\!-\!\eta_4(x'))}{4\pi[|x'|^2\!+\!(\delta_{x'}f(x))^2]^{3/2}}\Big)\wedge\partial_jw(x\!-\!x')\!\cdot\! N(x)dx'\Big\|_{L^2_x}\\
\leq&C(\|f\|_{W^{1,\infty}})\|\nabla w\|_{L^2}\leq C(\|f\|_{W^{1,\infty}})(\|\nabla^3f\|_{L^2}+1).
\end{align*}
 In the term $K_{5,2}$ we perform a similar integration by part to get
\bq\label{K52split}
K_{5,2}\leq K_{5,2}^1+K_{5,2}^2+K_{5,2}^3,
\eq
where
$$
K_{5,2}^1=\sum_{j=1}^2\Big\|\int\frac{(0,\delta_{x'} f(x)\!-\!\nabla f(x)\!\cdot\! x')\wedge\partial_jw(x\!-\!x')}{4\pi[|x'|^2\!+\!(\delta_{x'}f(x))^2]^{3/2}}\partial_j\eta_4(x')\!\cdot\! N(x)dx'\Big\|_{L^2_x},
$$
$$
K_{5,2}^2=\sum_{j=1}^2\Big\|\int\frac{(0,\delta_{x'}\partial_j f(x))\wedge\partial_jw(x\!-\!x')}{4\pi[|x'|^2\!+\!(\delta_{x'}f(x))^2]^{3/2}}\eta_4(x')\!\cdot\! N(x)dx'\Big\|_{L^2_x},
$$
$$
K_{5,2}^3\!=\!3\!\sum_{j=1}^2\!\Big\|\!\int\!\frac{(0,\delta_{x'} f(x)\!-\!\nabla f(x)\!\cdot\! x')\wedge\partial_jw(x\!-\!x')}{4\pi[|x'|^2\!+\!(\delta_{x'}f(x))^2]^{5/2}}(x'_j\!+\!\delta_{x'}f(x)\partial_jf(x\!-\!x'))\eta_4(x')\!\cdot\! N(x)dx'\Big\|_{L^2_x}.
$$
By virtue of \eqref{nablawL2b} and the support of $\na \eta_4$, we have
$$
K_{5,2}^1\leq C(\|f\|_{W^{1,\infty}})\|\na w\|_{L^2}\le C(\|f\|_{W^{1,\infty}})(\|\nabla^3f\|_{L^2}+1).
$$
In order to deal with $K_{5,2}^2$ we split further
$$
K_{5,2}^2\leq K_{5,2}^{2,1}+K_{5,2}^{2,2}+K_{5,2}^{2,3},
$$
where
$$
K_{5,2}^{2,1}=\sum_{j=1}^2\Big\|\int
(0,\delta_{x'}\partial_j f(x))\wedge\partial_jw(x\!-\!x')A(x,x')\eta_4(x')\!\cdot\! N(x)dx'\Big\|_{L^2_x},
$$
$$
K_{5,2}^{2,2}=\sum_{j=1}^2\Big\|\int\frac{(0,\delta_{x'}\partial_j f(x)\!-\!\nabla \partial_jf(x)\!\cdot\!x'1_{B_\zeta(x')})\wedge\partial_jw(x\!-\!x')}{4\pi[|x'|^2\!+\!(\nabla f(x)\!\cdot\!x')^2]^{3/2}}\eta_4(x')\!\cdot\! N(x)dx'\Big\|_{L^2_x},
$$
$$
K_{5,2}^{2,3}=\sum_{j=1}^2\Big\|\int\frac{(0,\nabla \partial_jf(x)\!\cdot\!x'1_{B_\zeta}(x'))\wedge\partial_jw(x\!-\!x')}{4\pi[|x'|^2\!+\!(\nabla f(x)\!\cdot\!x')^2]^{3/2}}\eta_4(x')\!\cdot\! N(x)dx'\Big\|_{L^2_x},
$$
where $1_{B_\zeta}$ denotes the characteristic function of the ball $B_\zeta$ and $0<\zeta$ will be chosen later. The bound \eqref{Abound} implies
\begin{align*}
	K_{5,2}^{2,1}\leq&C\|\nabla f\|_{L^\infty}\Big\|\int_0^1\!\!d\sigma\int_0^1\!\!ds\int_{|x'|<\delta}|\nabla^2f(x\!-\!sx')||\nabla^2f(x\!-\!\sigma x')||\nabla w(x\!-\!x')|\frac{dx'}{|x'|}\Big\|_{L^2_x}
	\\
	&+C(\|f\|_{W^{1,\infty}})\Big\|\int_{|x'|>\delta}|\nabla w(x-x')|\frac{dx'}{|x'|^3}\Big\|_{L^2_x}
	\\
	\leq&\delta C(\|f\|_{W^{1,\infty}})\|\nabla^2f\|_{L^\infty}^2\|\nabla w\|_{L^2}+\frac1\delta C(\|f\|_{W^{1,\infty}})\|\nabla w\|_{L^2}.
\end{align*}
Then in view of the Gagliardo-Nirenberg inequality
\bq\label{GNnabla2linfty}
\|\nabla u\|_{L^\infty}\leq C\|\nabla^3u\|_{L^2}^{\frac12}\|u\|_{L^\infty}^{\frac12},
\eq
and \eqref{nablawL2b}, we deduce
\[
K_{5,2}^{2,1}\leq \delta C(\|f\|_{W^{1,\infty}})\|\nabla^4f\|_{L^2}(\|\nabla^3 f\|_{L^2}+1)+\frac1\delta C(\|f\|_{W^{1,\infty}})(\|\nabla^3 f\|_{L^2}+1).
\]
Choosing $\delta=\eps(2^9C(\|f\|_{W^{1,\infty}})(\|\nabla^3f\|_{L^2}+1))^{-1}$ yields
$$
K_{5,2}^{2,1}\leq \frac{\eps}{2^9}\|\Delta^2f\|_{L^2}+C(\|f\|_{W^{1,\infty}},\eps)(\|\nabla^3 f\|_{L^2}^2+1).
$$
For $K_{5,2}^{2,2}$ we use Morrey's inequality, \eqref{GNnablaL2L2} and \eqref{nablawL2b} to have
\begin{align*}
	K_{5,2}^{2,2}\leq&C\|\nabla f\|_{L^\infty}\|\nabla^2f\|_{\dot{C}^{\frac12}}\Big\|\int_{|x'|<\zeta}|\nabla w(x-x')|\frac{dx'}{|x'|^{\frac32}}\Big\|_{L^2_x}
	\\
	&+C(\|f\|_{W^{1,\infty}})\Big\|\int_{|x'|>\zeta}|\nabla w(x-x')|\frac{dx'}{|x'|^3}\Big\|_{L^2_x}
	\\
	\leq&\zeta^{\frac12}C(\|f\|_{W^{1,\infty}})\|\nabla^3f\|_{L^4}\|\nabla w\|_{L^2}+\frac1\zeta C(\|f\|_{W^{1,\infty}})\|\nabla w\|_{L^2}\\
	\leq&\zeta^{\frac12}C(\|f\|_{W^{1,\infty}})\|\Delta^2 f\|_{L^2}^\mez\| \na^3f\|_{L^2}^\mez\|\nabla w\|_{L^2}+\frac1\zeta C(\|f\|_{W^{1,\infty}})\|\nabla w\|_{L^2}\\	
	\leq&\zeta^{\frac12}C(\|f\|_{W^{1,\infty}})\|\Delta^2 f\|_{L^2}^\mez\| \na^3f\|_{L^2}^\mez(\|\nabla^3f\|_{L^2}+1)+\frac1\zeta C(\|f\|_{W^{1,\infty}})(\|\nabla^3f\|_{L^2}+1).
\end{align*}
Choosing $\zeta=(\|\nabla^3f\|_{L^2}+1)^{-1}$ and using Young's inequality, we find
$$
K_{5,2}^{2,2}\leq \frac{\eps}{2^9}\|\Delta^2f\|_{L^2}+C(\|f\|_{W^{1,\infty}},\eps)(\|\nabla^3 f\|_{L^2}^2+1).
$$
Using the method of rotations and the maximal Hilbert transform (see Chapter 3, Theorem 3.4 \cite{Duo}), we can bound the singular integral operator in $K_{5,2}^{2,3}$ independent of the cutoff scale $\zeta$,
$$
K_{5,2}^{2,3}\leq C(\|f\|_{W^{1,\infty}})\|\nabla^2f\|_{L^\infty}\|\nabla w\|_{L^2}\leq \frac{\eps}{2^9}\|\Delta^2f\|_{L^2}+C(\|f\|_{W^{1,\infty}},\eps)(\|\nabla^3 f\|_{L^2}^2+1),
$$
where we have used \eqref{GNnabla2linfty} and Young's inequality. Adding the above estimates for $K_{5, 2}^{2, j}$ gives
\bq\label{K522b}
K_{5,2}^{2}\leq \frac{\eps}{2^7}\|\Delta^2f\|_{L^2}+C(\|f\|_{W^{1,\infty}},\eps)(\|\nabla^3 f\|_{L^2}^2+1).
\eq
The term $K_{5,2}^3$ can be handled analogously to $K_{5,2}^2$. It follows that
$$
K_{5,2}\leq \frac{\eps}{2^6}\|\Delta^2f\|_{L^2}+C(\|f\|_{W^{1,\infty}},\eps)(\|\nabla^3 f\|_{L^2}^2+1).
$$
A similar approach for $K_{5,3}$ gives
$$
K_{5,3}\leq \frac{\eps}{2^6}\|\Delta^2f\|_{L^2}+C(\|f\|_{W^{1,\infty}},\eps)(\|\nabla^3 f\|_{L^2}^2+1).
$$
Next, we use  \eqref{bound:Sop} to estimate the singular integral operator in $K_{5, 4}$, so that
\[
K_{5,4}\leq C(\|f\|_{W^{1,\infty}})\|\Delta w\|_{L^2}.
\]
Then by virtue of \eqref{NablaThetaLr}, \eqref{GNL2Linf}, \eqref{GNnabla2linfty} and \eqref{DeltathetaL2b}, we obtain
 \begin{align*}
K_{5,4} &\leq C(\|f\|_{W^{1,\infty}})\left(\|\nabla^3\theta\|_{L^2}+\|\nabla^2f\|_{L^4}\|\nabla^2\theta\|_{L^4}+\|\nabla\theta\|_{L^r}\|\nabla^3f\|_{L^q}\right)\\
\leq&C(\|f\|_{W^{1,\infty}})\left(\|\nabla^3\theta\|_{L^2}+\|\nabla^3f\|^{\frac12}_{L^2}\|\nabla^3\theta\|^{\frac12}_{L^2}\|\Delta\theta\|_{L^2}^{\frac12}+\|\Delta^2f\|_{L^2}^{1-\frac2q}\|\nabla^3f\|_{L^2}^{\frac2q}\right)\\
\leq&C(\|f\|_{W^{1,\infty}})\|\nabla^3\theta\|_{L^2}+\frac{\eps}{2^6}\|\Delta^2f\|_{L^2}+C(\|f\|_{W^{1,\infty}},\eps)(\|\nabla^3f\|_{L^2}^2+1)
\end{align*}
 To finish with this term we claim that
\bq\label{claim:tt:H3}
\|\nabla^3\theta\|_{L^2}\leq \delta\|\Delta^2f\|_{L^2}+ C(\|f\|_{W^{1,\infty}},\delta)(\|\nabla^3 f\|^2_{L^2}+1)\quad\forall \delta>0,
\eq
whose proof is postponed to Lemma \ref{nabla3thetaL2} below. Consequently,
$$
K_{5,4}\leq \frac{\eps}{2^5}\|\Delta^2f\|_{L^2}+C(\|f\|_{W^{1,\infty}},\eps)(\|\nabla^3 f\|_{L^2}^2+1),
$$
so that gathering the estimates for $K_{5,j}$ gives
\bq\label{K5b}
K_{5}\leq \frac{\eps}{2^4}\|\Delta^2f\|_{L^2}+C(\|f\|_{W^{1,\infty}},\eps)(\|\nabla^3 f\|_{L^2}^2+1).
\eq
{\bf Estimates for $K_6$, $K_7$ and $K_8$.}  As for $K_6$ we use \eqref{nablawL2b} when needed to obtain
\bq\label{K6b}
K_6\leq C(\|f\|_{W^{1,\infty}})(\|\nabla^3 f\|_{L^2}+1).
\eq
Regarding $K_7$  we use the  splitting
$$
K_7=K_{7,1}+K_{7,2}+K_{7,3}+K_{7,4},
$$
where
$$
K_{7,1}=\sum_{j=1}^2\Big\|\int\frac{(0,\delta_{x'}\partial_jf(x))\wedge w(x-x')}{2\pi[|x'|^2+(\delta_{x'}f(x))^2]^{3/2}}(\eta_1(x',\delta_{x'}f(x))-1)\cdot \partial_jN(x)dx'\Big\|_{L^2_x},
$$
$$
K_{7,2}=\sum_{j=1}^2\Big\|\int\frac{(0,\delta_{x'}\partial_jf(x)-\nabla\partial_jf(x)\cdot x')\wedge w(x-x')}{2\pi[|x'|^2+(\delta_{x'}f(x))^2]^{3/2}}\cdot \partial_jN(x)dx'\Big\|_{L^2_x},
$$
$$
K_{7,3}=2\sum_{j=1}^2\Big\|\int(0,\nabla\partial_jf(x)\cdot x')\wedge w(x-x')A(x,x')\cdot \partial_jN(x)dx'\Big\|_{L^2_x},
$$
$$
K_{7,4}=\sum_{j=1}^2\Big\|\int\frac{(0,\nabla\partial_jf(x)\cdot x')\wedge w(x-x')}{2\pi[|x'|^2+(\nabla f(x)\cdot x')^2]^{3/2}}\cdot \partial_jN(x)dx'\Big\|_{L^2_x}.
$$
It is easy to see that
$$
K_{7,1}\leq C(\|f\|_{W^{1,\infty}})\|\nabla^3 f\|_{L^2}.
$$
As for $K_{7,2}$ we use the argument in  (\ref{K42split}, \ref{K422K423b}) for $K_{4,2}^2$ to obtain
$$
K_{7,2}\leq \frac{\eps}{2^9}\|\Delta^2f\|_{L^2}+C(\|f\|_{W^{1,\infty}},\eps)(\|\nabla^3 f\|_{L^2}^2+1).
$$
To control $K_{7,3}$ we use the first inequality in \eqref{Abound} and an approach similar to  (\ref{K42split}, \ref{K424b}) for $K_{4,2}^4$. It follows that
$$
K_{7,3}\leq C(\|f\|_{W^{1,\infty}})(\|\nabla^3 f\|_{L^2}^2+1).
$$
Comparing with $K_{4,2}^6$ (\ref{K42split},\ref{K426b}) we obtain
$$
K_{7,4}\leq C(\|f\|_{W^{1,\infty}})(\|\nabla^3 f\|_{L^2}^2+1).
$$
We have proven that
\bq\label{K7b}
K_{7}\leq \frac{\eps}{2^9}\|\Delta^2f\|_{L^2}+C(\|f\|_{W^{1,\infty}},\eps)(\|\nabla^3 f\|_{L^2}^2+1).
\eq
The term $K_8$ \eqref{Ksplit} can be  handled similarly to $K_7$, so that
\bq\label{K8b}
K_8\leq \frac{\eps}{2^9}\|\Delta^2f\|_{L^2}+C(\|f\|_{W^{1,\infty}},\eps)(\|\nabla^3 f\|_{L^2}^2+1).
\eq
{\bf Estimates for $K_9$.}  We split  $K_9$ \eqref{Ksplit} as
$$
K_9\leq K_{9,1}+K_{9,2}+K_{9,3}+K_{9,4}
$$
where
$$
K_{9,1}=\sum_{j=1}^2\Big\|\int\frac{(x',\delta_{x'}f(x))\wedge \partial_jw(x-x')}{2\pi[|x'|^2+(\delta_{x'}f(x))^2]^{3/2}}(\eta_1(x',\delta_{x'}f(x))-\eta_4(x'))\cdot \partial_jN(x)dx'\Big\|_{L^2_x},
$$
$$
K_{9,2}=\sum_{j=1}^2\Big\|\int\frac{(0,\delta_{x'}f(x)-\nabla f(x)\cdot x')\wedge \partial_jw(x-x')}{2\pi[|x'|^2+(\delta_{x'}f(x))^2]^{3/2}}\eta_4(x')\cdot \partial_jN(x)dx'\Big\|_{L^2_x},
$$
$$
K_{9,3}=\sum_{j=1}^2\Big\|\int(x',\nabla f(x)\cdot x')\wedge \partial_jw(x-x')A(x,x')\eta_4(x')\cdot \partial_jN(x)dx'\Big\|_{L^2_x}.
$$
and
$$
K_{9,4}=\sum_{j=1}^2\Big\|\int\frac{(0,\nabla f(x)\cdot x')\wedge \partial_jw(x-x')}{2\pi[|x'|^2+(\nabla f(x)\cdot x')^2]^{3/2}}\eta_4(x')\cdot \partial_jN(x)dx'\Big\|_{L^2_x}.
$$
Using \eqref{nablawL2b} we bound the non-singular term $K_{9,1}$ by
$$
K_{9,1}\leq C(\|f\|_{W^{1,\infty}})\|\nabla w\|_{L^2}\|\nabla N\|_{L^2}\leq  C(\|f\|_{W^{1,\infty}})(\|\nabla^3 f\|_{L^2}^2+1).
$$
In $K_{9,2}$ we integrate by parts in $\p_j w(x-x')=-\p_{x_j'}w(x-x')$ to obtain
$$
K_{9,2}\leq K_{9,2}^1+K_{9,2}^2+K_{9,2}^3,
$$
where
$$
K_{9,2}^1=\sum_{j=1}^2\Big\|\int\frac{(0,\delta_{x'}f(x)-\nabla f(x)\cdot x')\wedge w(x-x')}{2\pi[|x'|^2+(\delta_{x'}f(x))^2]^{3/2}}\partial_j\eta_4(x')\cdot \partial_jN(x)dx'\Big\|_{L^2_x},
$$
$$
K_{9,2}^2=\sum_{j=1}^2\Big\|\int\frac{(0,\delta_{x'}\partial_jf(x))\wedge w(x-x')}{2\pi[|x'|^2+(\delta_{x'}f(x))^2]^{3/2}}\eta_4(x)\cdot \partial_jN(x)dx'\Big\|_{L^2_x},
$$
$$
K_{9,2}^3=3\sum_{j=1}^2\Big\|\int\!\frac{(0,\delta_{x'}f(x)\!-\!\nabla f(x)\!\cdot\! x')\!\wedge\! w(x\!-\!x')}{2\pi[|x'|^2+(\delta_{x'}f(x))^2]^{5/2}}(x_j\!+\!\delta_{x'}f(x)\partial_jf(x\!-\!x'))\eta_4(x')\!\cdot\! \partial_jN(x)dx'\Big\|_{L^2_x}.
$$
The first term is easily bounded by
$$
K_{9,1}^1\leq C(\|f\|_{W^{1,\infty}})\|\nabla^2 f\|_{L^2}\le C(\|f\|_{W^{1,\infty}})\|\nabla^3 f\|_{L^2}.
$$
We deal with $K_{9,2}^2$ similarly to $K_7$ in (\ref{Ksplit}, \ref{K7b}) to obtain
$$
K_{9,2}^2\leq \frac{\eps}{2^9}\|\Delta^2f\|_{L^2}+C(\|f\|_{W^{1,\infty}},\eps)(\|\nabla^3 f\|_{L^2}^2+1).
$$
The term  $K_{9,2}^3$ can be handled using a similar approach. Then it follows that
$$
K_{9,2}\leq \frac{\eps}{2^8}\|\Delta^2f\|_{L^2}+C(\|f\|_{W^{1,\infty}},\eps)(\|\nabla^3 f\|_{L^2}^2+1).
$$
The term $K_{9,3}$ can be treated similarly, yielding
$$
K_{9,3}\leq \frac{\eps}{2^8}\|\Delta^2f\|_{L^2}+C(\|f\|_{W^{1,\infty}},\eps)(\|\nabla^3 f\|_{L^2}^2+1).
$$
The inequalities \eqref{GNnabla2linfty}, \eqref{DeltathetaL2b}, and \eqref{bound:Sop} imply
$$
K_{9,4}\leq C\|\nabla^2f\|_{L^\infty}\|\nabla w\|_{L^2}\leq \frac{\eps}{2^7}\|\Delta^2f\|_{L^2}+C(\|f\|_{W^{1,\infty}},\eps)(\|\nabla^3 f\|_{L^2}^2+1).
$$
Combining the above estimates for $K_{9,j}$ leads to
\bq\label{K9b}
K_{9}\leq  \frac{\eps}{2^6}\|\Delta^2f\|_{L^2}+C(\|f\|_{W^{1,\infty}},\eps)(\|\nabla^3 f\|_{L^2}^2+1).
\eq
{\bf Estimates for $K_{10}$, $K_{11}$ and $K_{12}$.} We handle $K_{10}$ \eqref{Ksplit} as in (\ref{K4split}, \ref{K42b}) for $K_{4,2}$ to find
\bq\label{K10b}
K_{10}\leq \frac{\eps}{2^6}\|\Delta^2f\|_{L^2}+C(\|f\|_{W^{1,\infty}},\eps)(\|\nabla^3 f\|_{L^2}^2+1).
\eq
The last two terms $K_{11}$ and $K_{12}$  can be estimated analogously to $K_{5,2}^2$ in (\ref{K52split}, \ref{K522b}), giving
\bq\label{K12b}
K_{11}+K_{12}\leq \frac{\eps}{2^7}\|\Delta^2f\|_{L^2}+C(\|f\|_{W^{1,\infty}},\eps)(\|\nabla^3 f\|_{L^2}^2+1).
\eq
At this point we gather the estimates (\ref{K1b}, \ref{K2b}, \ref{K3b}, \ref{K4b}, \ref{K5b}, \ref{K6b}, \ref{K7b}, \ref{K8b}) to conclude that
\bq\label{est:K}
K\leq \frac{\eps}{2^2}\|\Delta^2f\|_{L^2}+C(\|f\|_{W^{1,\infty}},\eps)(\|\nabla^3 f\|_{L^2}^2+1).
\eq
Combining the estimates \eqref{est:R3} and \eqref{est:K} for $R_3$ and $K$, we deduce
\bq
\frac1{2\eps}\|\Delta(\ka G(f)f)\|_{L^2}^2\le  \frac{\eps}{2^4}\|\Delta^2f\|^2_{L^2}+C(\|f\|_{W^{1,\infty}},\eps)(\|\nabla^3 f\|_{L^2}^4+1).
\eq
Then inserting this into \eqref{evolH3} yields
\bq
\frac{d}{dt} \| \na^3 f\|_{L^2}^2\le -\eps \|\Delta^2f\|^2_{L^2}+C(\|f\|_{W^{1,\infty}},\eps)(\|\nabla^3 f\|_{L^2}^4+1).
\eq
By Gr\"onwall's lemma and the $L^2_t H^3_x$ estimate from \eqref{DeltafL2b}, we arrive at the following $H^3$ estimate
\begin{align}
\begin{split}\label{H3control}
\|\nabla^3 f(t)\|^2_{L^2}+\eps\int_0^t\|\Delta^2 f(\tau)\|^2_{L^2}d\tau \le (\|\nabla^3 f_0\|^2_{L^2}+1)\exp\Big(&C(\|f_0\|_{W^{1,\infty}},\eps)\big(\|f_0\|^2_{H^3}+t\big)\Big).
\end{split}
\end{align}
Finally, we prove the claim \eqref{claim:tt:H3} in the following lemma.
\begin{lemm}\label{nabla3thetaL2}
Consider $\theta$ and $w$ given by \eqref{thetaeq} and  \eqref{Fw} respectively with $f\in H^4$. Then the following estimates hold for any $\delta >0$,
	\bq\label{nabla3thetaL2b}
	\|\nabla^3\theta\|_{L^2}\leq \delta\|\Delta^2f\|_{L^2} +C(\|f\|_{W^{1,\infty}},\delta)(\|\nabla^3 f\|^2_{L^2}+1),
	\eq
	\bq\label{DeltawL2b}
	\|\Delta w\|_{L^2}\leq \delta\|\Delta^2f\|_{L^2} +C(\|f\|_{W^{1,\infty}},\delta)(\|\nabla^3 f\|^2_{L^2}+1).
	\eq
\end{lemm}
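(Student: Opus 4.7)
The strategy mirrors that of Lemma \ref{DeltathetaL2}, shifted up by one derivative. We first derive \eqref{DeltawL2b} from \eqref{nabla3thetaL2b}. Since $w=\partial_2\theta\,T_1-\partial_1\theta\,T_2$ with $T_j=(e_j,\partial_j f)$, expanding $\Delta w$ by the Leibniz rule produces terms of the form $(\nabla^3\theta)T_j$, $(\nabla^2\theta)(\nabla^2 f)$, and $(\nabla\theta)(\nabla^3 f)$. Using the uniform bound $\|f\|_{W^{1,\infty}}$ for the first, the interpolation \eqref{GNL2Linf} together with the $W^{1,r}$ bound \eqref{NablaThetaLr} (with $\frac{1}{r}+\frac{1}{q}=\frac12$) for the second, and H\"older's inequality with \eqref{NablaThetaLr} and \eqref{GNnablaL2L2} for the third, we bound $\|\Delta w\|_{L^2}$ by $C(\|f\|_{W^{1,\infty}})(\|\nabla^3\theta\|_{L^2}+\|\nabla^3 f\|_{L^2}^{2-2/q}+\|\Delta\theta\|_{L^2}\|\nabla^3 f\|_{L^2}^{1/2}\|\Delta^2 f\|_{L^2}^{1/2})$. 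After inserting \eqref{DeltathetaL2b} and \eqref{nabla3thetaL2b} and applying Young's inequality, we obtain \eqref{DeltawL2b}.

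For \eqref{nabla3thetaL2b}, the starting point is the tangential equation \eqref{d1theta} (the argument for $\partial_2\theta$ is identical by symmetry). Differentiating twice in $x$, we have
\bq\label{plan:three:deriv}
\tfrac12\nabla^2\partial_1\theta(x) = -\ka\,\nabla^2\partial_1 f(x)+\nabla^2\Big[\int\nabla_{x,z}\Gamma(x-x',f(x)-f(x'))\wedge w(x')dx'\cdot T_1(x)\Big].
\eq
It suffices to control the $L^2$ norm of the bracketed quantity. Using the decomposition \eqref{eq2.25:0}, we separate the contributions of $\eta_2$, $\eta_3$ and of derivatives falling on $\eta_1$, all of which are non-singular and are bounded by $C(\|f\|_{W^{1,\infty}})(\|\nabla^3 f\|_{L^2}+1)$ via \eqref{nablawL2b} and \eqref{w:Lp}. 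What remains is the principal singular piece with kernel $(x',\delta_{x'}f(x))/(|x'|^2+(\delta_{x'}f(x))^2)^{3/2}$ multiplied by $\eta_1(x',\delta_{x'}f(x))$, which is the same structure that produced the term $J$ in the proof of Lemma \ref{DeltathetaL2}.

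To handle the second-derivative version of $J$, we follow the same template as the $K_j$ decompositions in Section \ref{section:H3}. When both derivatives fall on $T_1$, $N$, or on $\eta_1(x',\delta_{x'}f(x))$, we obtain terms structurally identical to those already handled in $K_2,\dots,K_{12}$, and they obey the bound $\frac{\delta}{C}\|\Delta^2 f\|_{L^2}+C(\|f\|_{W^{1,\infty}},\delta)(\|\nabla^3 f\|_{L^2}^2+1)$. When a derivative falls on $w(x-x')$, we integrate by parts using \eqref{ibp:w}, moving the derivative onto the smooth factors; this replaces $\nabla w$ by $\delta_{x'}\theta$ together with geometric factors, after which the resulting integrals are estimated by approximating $\eta_1(x',\delta_{x'}f(x))$ by $\eta_4(x')$, linearizing $\delta_{x'}f(x)$ via \eqref{TVMIo2}, and recognizing the top-order piece as a singular integral operator with odd kernel to which \eqref{bound:Sop} applies. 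The H\"older estimate \eqref{tt:Holder} and the $W^{1,r}$ bound \eqref{NablaThetaLr} provide the $\eps$-gain needed to absorb $\|\Delta^2 f\|_{L^2}$ with small constant and to reduce the remainder to a power of $\|\nabla^3 f\|_{L^2}$ strictly below $2$, hence by Young's inequality to the right-hand side of \eqref{nabla3thetaL2b}.

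The main obstacle, as in the $H^2$ case, is the top-order singular integral obtained when all derivatives act on the kernel or are recovered from integration by parts on $w$: its $L^2\to L^2$ boundedness is only critical, so one cannot afford any loss and must use the supercritical $W^{1,r}$ regularity \eqref{NablaThetaLr} ($r>2$) for $\theta$ — equivalently the H\"older control \eqref{tt:Holder} — to convert $\|\nabla^3 f\|_{L^p}\|\nabla^2 f\|_{L^q}$ type products (with $p>2$, $q<\infty$, $\frac1p+\frac1q=\frac12-0^+$) into a sub-quadratic power of $\|\Delta^2 f\|_{L^2}$ via \eqref{GNL2Linf} and \eqref{GNnablaL2L2}. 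Once this is in place, summing the bounds for the finitely many pieces completes the proof.
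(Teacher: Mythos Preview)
Your plan is correct and follows essentially the same route as the paper: deduce \eqref{DeltawL2b} from \eqref{nabla3thetaL2b} via the Leibniz expansion of $\Delta w$ together with interpolation and \eqref{DeltathetaL2b}, then differentiate \eqref{d1theta} twice, strip off the non-singular $\eta_2,\eta_3,\nabla\eta_1$ contributions, and match each remaining piece to one of the already-analyzed $K_j$ (and $J_j$) templates from Sections \ref{section:H2est}--\ref{section:H3}. One small slip: your displayed cross term in the $\|\Delta w\|_{L^2}$ bound should read $\|\nabla^3 f\|_{L^2}^{1/2}\|\nabla^3\theta\|_{L^2}^{1/2}\|\Delta\theta\|_{L^2}^{1/2}$ (from $\|\nabla^2 f\|_{L^4}\|\nabla^2\theta\|_{L^4}$ and \eqref{GNL2Linf}, \eqref{GNnablaL2L2}) rather than $\|\Delta\theta\|_{L^2}\|\nabla^3 f\|_{L^2}^{1/2}\|\Delta^2 f\|_{L^2}^{1/2}$, but this is immaterial once \eqref{nabla3thetaL2b} and \eqref{DeltathetaL2b} are available.
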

\begin{proof}
In view of the formula \eqref{Fw} for $w$, the interpolation inequalities \eqref{GNL2Linf} and \eqref{GNnablaL2L2}, and the $W^{1, r}$ estimate \eqref{NablaThetaLr} for $\tt$, we have
\begin{align*}
	\|\Delta w\|_{L^2}\leq& \|f\|_{W^{1,\infty}}\|\nabla^3\theta\|_{L^2}+2\|\nabla^2f\|_{L^4}\|\nabla^2\theta\|_{L^4}+\|\nabla\theta\|_{L^r}\|\nabla^3f\|_{L^q})\\
	\leq& C(\|f\|_{W^{1,\infty}})(\|\nabla^3\theta\|_{L^2}+\|\nabla^3f\|^{\frac12}_{L^2}\|\nabla^3\theta\|^{\frac12}_{L^2}\|\Delta\theta\|_{L^2}^{\frac12}+\|\Delta^2f\|_{L^2}^{1-\frac2q}\|\nabla^3f\|_{L^2}^{\frac2q})\\
	\leq& C(\|f\|_{W^{1,\infty}})\|\nabla^3\theta\|_{L^2}+\frac{\delta}{2}\|\Delta^2f\|_{L^2}+C(\|f\|_{W^{1,\infty}},\delta)\|\nabla^3f\|_{L^2}(\|\nabla^2\tt\|_{L^2}+1).
\end{align*}
Then invoking the estimate \eqref{DeltathetaL2b}  for $\|\na^2 \tt\|_{L^2}$ we find that  \eqref{nabla3thetaL2b} implies \eqref{DeltawL2b}. The remainder of this proof is devoted to \eqref{nabla3thetaL2b} and  we note that it suffices to estimate $\p_j^3\tt$. We shall only consider $j=1$ as the case $j=2$ can be treated similarly. To this end we take  $\partial^2_1$ of \eqref{d1theta} and obtain
$$
\|\partial_1^3\theta\|_{L^2}\leq \Big\|\partial^2_{x_1}\Big(\int\nabla_{x,z} \Gamma(x-x',f(x)-f(x'))\wedge w(x')dx'\cdot T_1(x)\Big)\Big\|_{L^2_x}+\ka\|\partial_1^3f\|_{L^2}.
$$
To control the first term on the right-hand side, we split
$$
\Big\|\partial^2_{x_1}\Big(\int\nabla_{x,z} \Gamma(x-x',f(x)-f(x'))\wedge w(x')dx'\cdot T_1(x)\Big)\Big\|_{L^2_x}=L+R_4,
$$
where
$$
L=\Big\|\partial^2_{x_1}\Big(\int\frac{(x-x',f(x)-f(x'))\wedge w(x')}{4\pi[|x-x'|^2+(f(x)-f(x'))^2]^{3/2}}\eta_1(x-x',f(x)-f(x'))dx'\cdot T_1(x)\Big)\Big\|_{L^2_x}
$$
and  $R_4$ gathers non-singular integrals involving  $\eta_3$, $\nabla^j\eta_3$, $\eta_2$, $\nabla^j\eta_2$ and $\nabla^j\eta_1$ with $j\in \{1,2,3\}$. With the properties of the $\eta_j$ functions, the integrals to estimate are not singular, so that it can be controlled as follows
\bq\label{R4b}
R_4\leq C(\|f\|_{W^{1,\infty}})\|w\|_{L^2}\|\nabla^3 f\|_{L^2}\leq C(\|f\|_{W^{1,\infty}})\|\nabla^3 f\|_{L^2}.
\eq
Using the formula \eqref{Fw} for $w$ to calculate the triple scalar product in $L$,  we find
\bq\label{Lsplit}
L\leq L_1+L_2+L_3
\eq
where
$$
L_1=\Big\|\partial^2_{x_1}\Big(\int\frac{(x_2-x_2')(\partial_1f(x)-\partial_1f(x'))\partial_2\theta(x')}{4\pi[|x-x'|^2+(f(x)-f(x'))^2]^{3/2}}\eta_1(x-x',f(x)-f(x'))dx'\Big)\Big\|_{L^2_x},
$$
$$
L_2=\Big\|\partial^2_{x_1}\Big(\int\frac{(x_2-x_2')(\partial_2f(x)-\partial_2f(x'))\partial_1\theta(x')}{4\pi[|x-x'|^2+(f(x)-f(x'))^2]^{3/2}}\eta_1(x-x',f(x)-f(x'))dx'\Big)\Big\|_{L^2_x},
$$
$$
L_3=\Big\|\partial^2_{x_1}\Big(\int\frac{(f(x)\!-\!f(x')\!-\!\nabla f(x)\!\cdot\!(x\!-\!x'))\partial_1\theta(x')}{4\pi[|x-x'|^2+(f(x)-f(x'))^2]^{3/2}}\eta_1(x\!-\!x',f(x)\!-\!f(x'))dx'\Big)\Big\|_{L^2_x}.
$$
In $L_j$ we take  one $\partial_{x_1}$ and make the change of  variables $x'\mapsto x-x'$ to obtain
\bq\label{L1split}
L_1\leq L_{1,1}+L_{1,2}+L_{1,3}
\eq
where
$$
L_{1,1}=\Big\|\partial_{x_1}\Big(\int\frac{x_2'\delta_{x'}\partial_1f(x)\partial_2\theta(x-x')}{4\pi[|x'|^2+(\delta_{x'}f(x))^2]^{3/2}}\nabla\eta_1(x',\delta_{x'}f(x))\cdot T_1(x)dx'\Big)\Big\|_{L^2_x},
$$
$$
L_{1,2}=\Big\|\partial_{x_1}\Big(\partial_1^2f(x)\int\frac{x_2'\partial_2\theta(x-x')\eta_1(x',\delta_{x'}f(x))}{4\pi[|x'|^2+(\delta_{x'}f(x))^2]^{3/2}}dx'\Big)\Big\|_{L^2_x},
$$
$$
L_{1,3}=3\Big\|\partial_{x_1}\Big(\int\frac{x_2'\delta_{x'}\partial_1f(x)\partial_2\theta(x-x')}{4\pi[|x'|^2+(\delta_{x'}f(x))^2]^{5/2}}(x'_1+\delta_{x'}f(x)\partial_1f(x))\eta_1(x',\delta_{x'}f(x))dx'\Big)\Big\|_{L^2_x}.
$$
In view of \eqref{NablaThetaLr}, \eqref{DeltathetaL2b} and the support of $\na\eta_1$, it is easy to bound
$$
L_{1,1}\leq C(\|f\|_{W^{1,\infty}})(\|\nabla^3 f\|_{L^2}+1).
$$
Taking $\p_{x_1}$ in $L_{1,2}$ yields
$$
L_{1,2}\leq L_{1,2}^1+L_{1,2}^2+L_{1,2}^3+L_{1,2}^4,
$$
where
$$
L_{1,2}^1=\Big\|\partial_1^2f(x)\int\frac{x_2'\partial_2\theta(x-x')\partial_3\eta_1(x',\delta_{x'}f(x))}{4\pi[|x'|^2+(\delta_{x'}f(x))^2]^{3/2}}\delta_{x'}\partial_1f(x)dx'\Big\|_{L^2_x},
$$
$$
L_{1,2}^2=\Big\|\partial_1^3f(x)\int\frac{x_2'\partial_2\theta(x-x')\eta_1(x',\delta_{x'}f(x))}{4\pi[|x'|^2+(\delta_{x'}f(x))^2]^{3/2}}dx'\Big\|_{L^2_x},
$$
$$
L_{1,2}^3=3\Big\|\partial_1^2f(x)\int\frac{x_2'\partial_2\theta(x-x')\eta_1(x',\delta_{x'}f(x))}{4\pi[|x'|^2+(\delta_{x'}f(x))^2]^{5/2}}\delta_{x'}f(x)\delta_{x'}\partial_1f(x)dx'\Big\|_{L^2_x},
$$
$$
L_{1,2}^4=\Big\|\partial_1^2f(x)\int\frac{x_2'\partial^2_{1,2}\theta(x-x')\eta_1(x',\delta_{x'}f(x))}{4\pi[|x'|^2+(\delta_{x'}f(x))^2]^{3/2}}dx'\Big)\Big\|_{L^2_x}.
$$
 Thanks to the support of $\na \eta_1$, $L_{1, 2}^1$ is controlled by
$$
L_{1,2}^1\leq C(\|f\|_{W^{1,\infty}})\| \na^2 f\|_{L^2}\| \na \tt\|_{L^2}\le C(\|f\|_{W^{1,\infty}})\|\nabla^3 f\|_{L^2}.
$$
The terms $L_{1, 2}^2$, $L_{1, 2}^3$ and $L_{1, 2}^4$ can be treated similarly to $K_2$, $K_8$ and $K_9$ respectively, giving
$$
L_{1,2}\leq \frac{\delta}{2^4}\|\Delta^2f\|_{L^2}+C(\|f\|_{W^{1,\infty}},\delta)(\|\nabla^3 f\|_{L^2}^2+1).
$$
Next we expand $L_{1,3}$ to find
$$
L_{1,3}\leq L_{1,3}^1+...+L_{1,3}^6$$
where
$$
L_{1,3}^1\!=\!3\Big\|\int\!\frac{x_2'\delta_{x'}\partial_1f(x)\partial_2\theta(x\!-\!x')}{4\pi[|x'|^2\!+\!(\delta_{x'}f(x))^2]^{5/2}}(x'_1\!+\!\delta_{x'}f(x)\partial_1f(x))\partial_3\eta_1(x',\delta_{x'}f(x))\delta_{x'}f(x)\partial_1f(x)dx'\Big\|_{L^2_x},
$$
$$
L_{1,3}^2=3\Big\|\int\frac{x_2'\delta_{x'}\partial_1f(x)\partial_2\theta(x-x')}{4\pi[|x'|^2+(\delta_{x'}f(x))^2]^{5/2}}\delta_{x'}f(x)\partial^2_1f(x)\eta_1(x',\delta_{x'}f(x))dx'\Big\|_{L^2_x},
$$
$$
L_{1,3}^3=3\Big\|\int\frac{x_2'\delta_{x'}\partial_1f(x)\partial_2\theta(x-x')}{4\pi[|x'|^2+(\delta_{x'}f(x))^2]^{5/2}}\delta_{x'}\partial_1f(x)\partial_1f(x)\eta_1(x',\delta_{x'}f(x))dx'\Big\|_{L^2_x},
$$
$$
L_{1,3}^4=3\Big\|\int\frac{x_2'\delta_{x'}\partial^2_1f(x)\partial_2\theta(x-x')}{4\pi[|x'|^2+(\delta_{x'}f(x))^2]^{5/2}}(x'_1+\delta_{x'}f(x)\partial_1f(x))\eta_1(x',\delta_{x'}f(x))dx'\Big\|_{L^2_x},
$$
$$
L_{1,3}^5=15\Big\|\int\frac{x_2'\delta_{x'}\partial_1f(x)\partial_2\theta(x\!-\!x')}{4\pi[|x'|^2\!+\!(\delta_{x'}f(x))^2]^{7/2}}(x'_1\!+\!\delta_{x'}f(x)\partial_1f(x))\delta_{x'}f(x)\delta_{x'}\partial_1f(x)\eta_1(x',\delta_{x'}f(x))dx'\Big\|_{L^2_x},
$$
and
$$
L_{1,3}^6=3\Big\|\int\frac{x_2'\delta_{x'}\partial_1f(x)\partial_{1,2}^2\theta(x-x')}{4\pi[|x'|^2+(\delta_{x'}f(x))^2]^{5/2}}(x'_1+\delta_{x'}f(x)\partial_1f(x))\eta_1(x',\delta_{x'}f(x))dx'\Big\|_{L^2_x}.
$$
It is readily seen that
$$
L_{1,3}^1\leq C(\|f\|_{W^{1,\infty}})\| \na \tt\|_{L^2}\le C(\|f\|_{W^{1,\infty}})\| \na^3f\|_{L^2}.
$$
The term $L_{1, 3}^j$ with $j\in \{2, \dots, 6\}$ can be controlled analogously to the term $K_8$, $K_{10}$, $K_{4,1}$, $L_{1, 3}^3$ and $K_{12}$ respectively, giving
$$
L_{1,3}\leq \frac{\delta}{2^4}\|\Delta^2f\|_{L^2}+C(\|f\|_{W^{1,\infty}},\delta)(\|\nabla^3 f\|_{L^2}^2+1).
$$
Combining the above estimates for  $L_{1, j}$ yields
$$
L_{1}\leq \frac{\delta}{2^3}\|\Delta^2f\|_{L^2}+C(\|f\|_{W^{1,\infty}},\delta)(\|\nabla^3 f\|_{L^2}^2+1).
$$
The terms  $L_2$ and $L_3$ can be treated using a similar approach, giving
$$
L\leq \frac{\delta}{2}\|\Delta^2f\|_{L^2}+C(\|f\|_{W^{1,\infty}},\delta)(\|\nabla^3 f\|_{L^2}^2+1),
$$
which finishes the proof of the lemma.
\end{proof}
To conclude the proof of Theorem \ref{theo:Muskate} it remains to establish $H^s$ estimates with $s >3$. To this end we fix a subcritical index $s_0\in (2, 3]$ and apply Proposition \ref{prop:estDN} to have
\[
\| G(f)f\|_{H^{s-1}}\le \cF(\| f\|_{H^{s_0}})\| f\|_{H^s}.
\]
Then an $H^s$ estimate for \eqref{Muskate} yields
\bq
\begin{aligned}
\mez\frac{d}{dt} \| f(t)\|_{H^s}^2&\le \| G(f)f\|_{H^{s-1}}\| f\|_{H^{s+1}}-\eps \| \na f\|_{H^s}^2\\
&\le \cF(\| f\|_{H^{s_0}})\| f\|_{H^s}\| f\|_{H^{s+1}}-\eps \|  f\|_{H^{s+1}}^2+\eps \| f\|_{L^2}^2\\
&\le \left(\frac{1}{2\eps}\cF^2(\| f\|_{H^{s_0}})+1\right)\| f\|_{H^s}^2-\frac{\eps}{2} \|  f\|_{H^{s+1}}^2.
\end{aligned}
\eq
Using Gr\"onwall's inequality we deduce
\bq
\| f(t)\|_{H^s}^2+\eps \int_0^t\| f(\tau)\|_{H^{s+1}}^2d\tau\le \| f_0\|_{H^s}^2C\left(\| f\|_{L^\infty([0, t]; H^{s_0})}, t, \eps \right).
\eq
Finally, invoking the estimate \eqref{fL2bound} and \eqref{H3control} to control $\| f\|_{L^\infty([0, t]; H^{s_0})}\le \| f\|_{L^\infty([0, t]; H^3)}$, we obtain the global-in-time $H^s$ estimate
\bq
\| f(t)\|_{H^s}^2+\eps \int_0^t\| f(\tau)\|_{H^{s+1}}^2d\tau\le \| f_0\|_{H^s}^2C\left(\| f_0\|_{H^3}, t, \eps \right),\quad t>0.
\eq
This concludes the proof of Theorem \ref{theo:Muskate}.
\section{Proof of Theorem \ref{theo:main}}\label{section:proof}
For  $f_0\in W^{1, \infty}(\T^2)$ and $\eps\in (0, 1)$, we let $f_{\eps, 0}=f_0*\rho_\eps$, where $\rho_\eps$ denotes the standard mollifier at scale $\eps$.  By virtue of Theorem \ref{theo:Muskate}, the  $\eps$-Muskat equation
\bq\label{eq:fe0}
\p_t f_\eps=-G(f_\eps)f_\eps+\eps \Delta_x f_\eps,\quad f_\eps\vert_{t=0}=f_{\eps, 0}
\eq
has a unique global smooth solution. Note that by rescaling time we have set $\ka=1$. Using  Proposition \ref{prop:reformDN_b} we rewrite \eqref{eq:fe0} as
\bq\label{eq:fe}
\p_t f_\eps=p.v. \int_{\T^2}\left\{\nabla_{x,z} \Gamma(x-x',f_\eps(x)-f_\eps(x'))\wedge w_\eps(x')\right\}\cdot N_\eps(x)dx'+\eps \Delta_x f_\eps(x),
\eq
\bq\label{eq:tte}
\mez \tt_\eps(x)+p.v.\int_{\T^2} \na_{x, z}\Gamma(x- x', f_\eps(x)-f_\eps(x'))\cdot N_\eps(x') \tt_\eps(x')dx'=f_\eps(x),
\eq
\bq\label{eq:we}
w_\eps(x)=\partial_2\theta_\eps(x)(1,0,\partial_1f_\eps(x))-\partial_{1}\theta_\eps(x)(0,1,\partial_2f_\eps(x)),
\eq
where $N_\eps=(-\na_x f_\eps, 1)$. The maximum principle \eqref{max:Muskate} and Proposition \ref{prop:W1p} imply the  uniform bounds
\bq\label{uniformbounds:ftt}
\| f_\eps\|_{L^\infty(\Rr_+; W^{1, \infty}(\T^2))}\le C_1,\quad \| \tt_\eps\|_{L^\infty(\Rr_+; W^{1, 2+\delta_*}(\T^2))}\le C_2,
\eq
where $\delta_*>0$ depends only on $C_1$. Then using equation \eqref{eq:fe0} and Proposition \ref{prop:DNLp} we deduce the uniform bound
\bq
\| \p_tf_n\|_{L^\infty(\Rr_+; H^{-1}(\T^2))}\le C_3.
\eq
Fixing an arbitrary sequence $\eps_n\to 0$, we relabel $f_n\equiv f_{\eps_n}$, $\tt_n\equiv \tt_{\eps_n}$ and $w_n\equiv w_{\eps_n}$. Combining the above uniform bounds for $f_n$ with Arzela-Ascoli's theorem and  Aubin-Lions's lemma, we can extract a subsequence, still denoted by $n$, such that
 \bq\label{conv:fn}
  f_n \overset{\ast}{\rightharpoonup} f\quad\text{in } L^\infty(\Rr_+; W^{1, \infty}(\T^2)),\quad  f_n \to f\quad\text{in } C(\T^2\times [0, T])\quad \forall T>0
  \eq
  and
  \bq\label{conv:ttn}
\tt_n\overset{\ast}{\rightharpoonup} \tt\quad\text{in } L^\infty(\Rr_+; W^{1, 2+\delta_*}(\T^2)).
\eq
In particular, $f(0)=\lim_n f_n(0)=\lim_{n} f_{n, 0}=f_0$ in $C(\T^2)$.  With the uniform convergence $f_n\to f$ in $C(\T^2\times [0, T])$, we can argue as in Step 2 of the proof of Theorem 1.2 in \cite{DGN} to prove that $f$ is a viscosity solution and hence is unique by virtue of the comparison principle in Theorem \ref{theo:comparison:viscosity}. The remainder of this proof is devoted to show that $f$ satisfies the equation $\p_t f=-G(f)f$ in $L^\infty_tL^2_x$. This will be achieved by passing  to the limit in \eqref{eq:fe} and \eqref{eq:tte}.

We note that $w_n=(1,0,\partial_1f_n)\partial_2\theta_n-(0,1,\partial_2f_n)\partial_{1}\theta_n$ is uniformly bounded in $L^\infty(\Rr_+; L^{2+\delta_*}(\T^2))$, hence (along a subsequence)
\bq
w_n\overset{\ast}{\rightharpoonup} w\quad\text{in } L^\infty(\Rr_+; L^{2+\delta_*}(\T^2)).
\eq
We now identify the limit $w$. For any $\varphi\in C^\infty_c(\T^2\times \Rr_+)$, we can integrate by parts to have
\begin{align*}
\int_{\Rr_+}\int_{\T^2} w_n(x, t)\varphi(x, t)dxdt&=\int_{\Rr_+}\int_{\T^2} \vec{i} \partial_2\theta_n\varphi dxdt - \int_{\Rr_+}\int_{\T^2}\vec{k}(\partial_2\theta_n f_n\p_1\varphi+\p_1\partial_2\theta_n f_n\varphi) dxdt \\
&\quad- \int_{\Rr_+}\int_{\T^2} \vec{j} \partial_1\theta_n\varphi dxdt +\int_{\Rr_+}\int_{\T^2}\vec{k}(\partial_1\theta_nf_n\p_2\varphi+\p_2\partial_1\theta_nf_n\varphi)dxdt.
\end{align*}
The convergence in \eqref{conv:fn} and \eqref{conv:ttn} then yields
\begin{align*}
\lim_{n\to \infty} \int_{\Rr_+}\int_{\T^2} w_n(x, t)\varphi(x, t)dxdt&=\int_{\Rr_+}\int_{\T^2} \vec{i} \partial_2\theta\varphi dxdt - \int_{\Rr_+}\int_{\T^2}\vec{k}\partial_2\theta f\p_1\varphi dxdt \\
&\quad- \int_{\Rr_+}\int_{\T^2} \vec{j} \partial_1\theta\varphi dxdt +\int_{\Rr_+}\int_{\T^2}\vec{k}\partial_1\theta f\p_2\varphi dxdt.
\end{align*}
Therefore, in the sense of distributions we have $\lim_{n\to \infty}w_n= \vec{i} \partial_2\theta+\vec{k}\p_1(\partial_2\theta f)-\vec{j} \partial_1\theta-\vec{k}\p_2(\partial_1\theta f)$. It follows that
\bq\label{limiting:w}
\begin{aligned}
w=\vec{i} \partial_2\theta+\vec{k}\p_1(\partial_2\theta f)-\vec{j} \partial_1\theta-\vec{k}\p_2(\partial_1\theta f)&=\vec{i} \partial_2\theta+\vec{k}\partial_2\theta \p_1f-\vec{j} \partial_1\theta-\vec{k}\partial_2\theta \p_1f\\
&=(1,0,\partial_1f)\partial_2\theta-
(0,1,\partial_2f)\partial_{1}\theta,
\end{aligned}
\eq
where we have used the identity $\p_1(\partial_2\theta f)-\p_2(\partial_1\theta f)=\partial_2\theta \p_1f-\partial_2\theta \p_1f$ for $\tt\in H^1(\T^2)$ and $f\in W^{1, \infty}(\T^2)$; this  can be justified by approximating $\tt$ by smooth functions $\wt \tt_n\to \tt$ in $H^1(\T^2)$.

Applying Proposition \ref{prop:fundamentalsol} we can integrate by parts in the nonlinear term in \eqref{eq:tte} and obtain
\bq\label{ibp:tt}
\begin{aligned}
&-\int_{\T^2} \na_{x, z}\Gamma(x- x', f_n(x)-f_n(x'))\cdot N_n(x') \tt_n(x')dx'\\
&= \int_{\T^2}\left(\frac {x-x'}{4\pi|x-x'|^2} \frac {f_n(x)-f_n(x')}{(|x-x'|^2+|f_n(x)-f_n(x')|^2)^{1/2}}\right)\cdot \na \tt_n(x')dx'\\
&\quad+\int_{\T^2}\left(\frac{x-x'}{|x-x'|^2}F_1(x-x', f_n(x)-f_n(x'))\right)\cdot \na \tt_n(x')dx'\\
&\quad-\int_{\T^2}\frac{x-x'}{|x-x'|^2}\cdot F_2(x-x', f_n(x)-f_n(x'))\tt_n(x')dx'\\
&\quad+\int_{\T^2}F_3(x-x', f_n(x)-f_n(x'))\cdot \na \tt_n(x')dx'\\
&\quad -\int_{\T^2}(\dv_x F_3)(x-x', f_n(x)-f_n(x'))\tt_n(x')dx':=\sum_{j=1}^5I_n^j,
\end{aligned}
\eq
where the $F_j$'s are smooth and bounded. We also denote by $I^j$ the same integral as $I^j_n$ with $f_n$ and $\tt_n$ replaced by $f$ and $\tt$.

Let $p\in (1, 2)$ be the conjugate exponent of $2+\delta_*$. Fix $T \in (0, \infty)$ and let $\varphi$ be an arbitrary function in $L^p([0, T]; L^{2+\delta_*}(\T^2))$. We claim that
\bq\label{conv:Ijn}
\lim_{n\to \infty} \int_0^T\int_{\T^2} I_n^j(x, t)\varphi(x, t)dxdt=\int_0^T\int_{\T^2}I^j(x, t)\varphi(x, t)dxdt,\quad j\in\{1,\dots, 5\}.
\eq
Each integral $ \int_0^T\int_{\T^2} I_n^j(x, t)\varphi(x, t)dxdt$ assumes one of the forms
\[
\int_0^T\int_{\T^2}\int_{\T^2} F(x-x', f_n(x)-f_n(x'))\varphi(x, t)dx \tt_n(x', t)dx'dt
\]
and
\[
 \int_0^T\int_{\T^2}\int_{\T^2} F(x-x', f_n(x)-f_n(x'))\varphi(x, t)dx \na_x\tt_n(x', t)dx'dt.
\]
It follows from \eqref{conv:ttn} that $\tt_n\overset{\ast}{\rightharpoonup} \tt$ and $\na_x\tt_n\overset{\ast}{\rightharpoonup} \na_x\tt$ in $L^\infty([0, T]; L^{2+\delta_*}(\T^2))$. Thus \eqref{conv:Ijn} would follow if we can show that $g_n(x', t):=\int_{\T^2} F(x-x', f_n(x)-f_n(x'))\varphi(x, t)dx $  converges to $g(x', t):=\int_{\T^2} F(x-x', f(x)-f(x'))\varphi(x, t)dx$ in $L^p(\T^2\times [0, T])$. Indeed, the $I^j_n$'s with $j\in \{1, 2, 3\}$ have the most singular $F$ obeying (for $x\ne x'$)
\[
|F(x-x', f_n(x, t)-f_n(x', t))|\le \frac{1}{|x-x'|}\in L^p_{x}(\T^2)
\]
and $F(x-x', f_n(x, t)-f_n(x', t))\to F(x-x', f(x, t)-f(x', t))$ for all $(x, t)\in (\T^2\setminus\{x'\})\times [0, T]$ since $f_n\to f$ in $C(\T^2\times [0, T])$. Therefore, $g_n(x', t)\to g(x', t)$ pointwise by the dominated convergence theorem, and  we have
\[
|g_n(x', t)|\le C\| \varphi(\cdot, t)\|_{L^{2+\delta_*}(\T^2)}.
\]
Clearly $\| \varphi(\cdot, t)\|_{L^{2+\delta_*}(\T^2)}\in L^p(\T^2\times [0, T])$, so the dominated convergence theorem implies that $g_n\to g$ in  $L^p(\T^2\times [0, T])$ as claimed. This completes the proof of  \eqref{conv:Ijn}.  Consequently, we have the following weak convergence in $L^{2+\delta_*}([0, T]; L^p(\T^2))$:
\bq
\begin{aligned}
&-\int_{\T^2} \na_{x, z}\Gamma(x- x', f_n(x)-f_n(x'))\cdot N_n(x') \tt_n(x')dx'\\
&\wc\sum_{j=1}^5 I^j=-\int_{\T^2} \na_{x, z}\Gamma(x- x', f(x)-f(x'))\cdot N(x') \tt (x')dx',
\end{aligned}
\eq
where the last equality follows by reversing the integration by parts in \eqref{ibp:tt} for $I^j$. Therefore, passing to the limit in \eqref{eq:tte} (in the sense of distribution) we obtain
\bq\label{limiting:eqtt}
\mez \tt(x)+p.v.\int_{\T^2} \na_{x, z}\Gamma(x- x', f(x)-f(x'))\cdot N(x') \tt(x')dx'=f(x).
\eq
Next, we pass to the limit in \eqref{eq:fe}. For any $\varphi\in C^\infty_c(\T^2\times [0, T])$, using Proposition \ref{prop:GwN} we have

\[
\begin{aligned}
& \int_0^T\int_{\T^2}\varphi(x, t)\int_{\T^2} \left\{\nabla_{x,z}\Gamma(x-x',f_n(x)-f_n(x'))\wedge w_n(x')\right\}\cdot N_n(x)dx'dxdt\\
&=- \int_0^T\int_{\T^2}\varphi(x, t) \int_{\T^2} \left\{\nabla_{x,z}\Gamma(x-x',f_n(x)-f_n(x'))\wedge  N_n(x)\right\} \cdot w_n(x')dx'dxdt,
\end{aligned}
\]
where
\bq
\label{nonl:f}
\begin{aligned}
&\nabla_{x,z}\Gamma(x-x',f_n(x)-f_n(x'))\wedge  N_n(x)\\
&=(I_{1, n}, I_{2, n}, I_{3, n})+(J_{1, n}, J_{2, n}, J_{3, n})+H(x-x', f_n(x)-f_n(x'))\\
&\qquad+\vec{k}\left\{\dv_x(L^\perp(x-x', f_n(x)-f_n(x')))-(\dv_xL^\perp)(x-x', f_n(x)-f_n(x'))\right.\\
&\qquad\qquad\left. -(\p_{x_1}\eta_1)(x-x', f_n(x)-f_n(x'))\frac{x_2-x_2'}{|x-x'|^2}(f_n(x)-f_n(x'))A_n^{-\mez}\right.\\
&\qquad\qquad\left.+(\p_{x_2}\eta_1)(x-x', f_n(x)-f_n(x'))\frac{x_1-x_1'}{|x-x'|^2}(f_n(x)-f_n(x'))A_n^{-\mez}\right\}
\end{aligned}
\eq
and
\bq
A_n=|x-x'|^2+(f_n(x)-f_n(x'))^2,\quad L_j(x, z)=\frac{x_j}{|x|^2}\int_0^z\p_z\eta_1(x, s)s(|x|^2+s^2)^{-\mez}ds~(x\ne 0),
\eq
\bq
\begin{aligned}
&I_{1, n}=\p_{x_2}\Big(\eta_1(x-x', f_n(x)-f_n(x'))A_n^{-\mez}\Big),\quad I_{2, n}=-\p_{x_1}\Big(\eta_1(x-x', f_n(x)-f_n(x'))A_n^{-\mez}\Big),\\
& I_{3, n}=\dv_{x}\left(\eta_1(x-x', f_n(x)-f_n(x'))\frac{(x_2-x_2', -(x_1-x_1'))}{|x-x'|^2}(f_n(x)-f_n(x'))A_n^{-\mez}\right),
\end{aligned}
\eq
and
\bq
\begin{aligned}
J_{1, n}&=\p_2(K_1(x-x', f_n(x)-f_n(x'))),\quad J_{2, n}=\p_1(K_2(x-x', f_n(x)-f_n(x'))),\\
J_{3, n}&=\p_1(\wt{K}_1(x-x', f_n(x)-f_n(x')))+\p_2(\wt{K}_2(x-x', f_n(x)-f_n(x'))).
\end{aligned}
\eq
Here the functions $H$, $L_j$, $K_j$ and $\wt K_j$ are bounded and continuous. We note that in \eqref{nonl:f} the vectors $I$, $J$ and the term $\dv_x(L^\perp(x-x', f_n(x)-f_n(x')))$ are total derivatives in $x$, so that we can integrate by parts and put the $x$-derivatives on $\varphi$. For the vector $(I_{1, n}, I_{2, n}, I_{3, n})$ we obtain
\[
\int_0^T\int_{\T^2}\int_{\T^2} \wt I_{1, n}\p_{x_2}\varphi(x, t)w_{1, n}(x', t)- \wt I_{1, n}\p_{x_1}\varphi(x, t)w_{2, n}(x', t)+\wt I_{2, n}\cdot \na_x\varphi(x, t)w_{3, n}(x', t))dxdx'dt,
\]
where
\begin{align*}
&\wt I_{1, n}=\eta_1(x-x', f_n(x)-f_n(x'))A_n^{-\mez},\\
& \wt I_{2, n}=\eta_1(x-x', f_n(x)-f_n(x'))\frac{(x_2-x_2', -(x_1-x_1'))}{|x-x'|^2}(f_n(x)-f_n(x'))A_n^{-\mez}.
\end{align*}
Since for any $x\ne x'$, $|\wt I_{1, n}|\le C|x-x'|^{-1}\in L^p_x(\T^2)$  and $w_n\overset{\ast}{\rightharpoonup} w$ in $L^\infty([0, T]; L^{2+\delta_*}(\T^2))$, one can argue as in the proof of \eqref{conv:Ijn} to deduce the convergence of the preceding integral. The vector $(J_{1, n}, J_{2, n}, J_{3, n})$ can be treated similarly and in fact they are less singular since $K_j$ and $\wt{K}_j$ are bounded continuous functions. This is also the case for $H$. Next, the terms
\[
(\p_{x_j}\eta_1)(x-x', f_n(x)-f_n(x'))\frac{x_k-x_k'}{|x-x'|^2}(f_n(x)-f_n(x'))A_n^{-\mez},\quad j\ne k
\]
are bounded thanks to the support of $\na_x\eta_1$ and the uniform Lipchitz bound for $f_n$. 
 It remains to treat the two terms involving $L=(L_1, L_2)$. We note that the $L_j$'s (given by \eqref{def:AL}) are smooth on $(\Rr^2\setminus \{0\})\times \Rr$ and $L_j(x, z)=0$ for $(x, z)\in  B_{1/2}\setminus (\{0\}\times \Rr)$  or $|x|>1$ due to the presence of $\p_z\eta_1$. Then, since $\{f_n\}$ is bounded in $L^\infty_t W^{1, \infty}_x$, there exists $\delta>0$ small enough such that $L(x-x', f_n(x, t)-f_n(x', t))=0$ for all  $t\in [0, T]$ and $0<|x'-x|<\delta$ or $|x-x'|>1$. Consequently,
\begin{align*}
&\int_{\T^2} (\dv_xL^\perp)(x-x', f_n(x, t)-f_n(x', t))\varphi(x, t)dx \\
&=\int_{\delta<|x-x'|<1} (\dv_xL^\perp)(x-x', f_n(x, t)-f_n(x', t))\varphi(x, t)dx,
\end{align*}
which converges to
\[
\int_{\delta<|x-x'|<1} (\dv_xL^\perp)(x-x', f(x, t)-f(x', t))\varphi(x, t)dx
\]
 for all $(x, t)$ and is bounded by $C\| \varphi(\cdot, t)\|_{L^1(\T^2)}$. Finally, the integral involving  $\dv_x(L^\perp(x-x', f_n(x)-f_n(x')))$ can be treated analogously after integrating by parts to put the $x$-derivative on $\varphi$. We have proved that
 \[
 \int_{\T^2} \left\{\nabla_{x,z}\Gamma(x-x',f_n(x)-f_n(x'))\wedge w_n(x')\right\}\cdot N_n(x)dx'
 \]
converges in the sense of distribution to the same integral with $(f_n, w_n, N_n)$ replaced by $(f, w, N)$. Therefore, we can pass to the limit in the sense of distribution in \eqref{eq:fe} to have
\bq\label{limiting:eqf}
\p_t f=p.v. \int_{\T^2}\left\{\nabla_{x,z} \Gamma(x-x',f(x)-f(x'))\wedge w(x')\right\}\cdot N(x)dx'.
\eq
In view of \eqref{limiting:w}, \eqref{limiting:eqtt}, \eqref{limiting:eqf} and Proposition \ref{prop:DNLp}, we conclude that $f$ satisfies the equation $\p_t f=-G(f)f$ in $L^\infty_tL^{2+\delta_*}_x$. This concludes the proof of the main theorem.

\appendix

\section{Kernels of layer potentials on \texorpdfstring{$\T^2\times \Rr$}{}}\label{appendix:fundamentalsol}

Let $x=(x_1,x_2)\in \Rr^2$ and $z\in \Rr$. Then in $\Rr^3$, the fundamental solution of  Laplace's equation is given by
$$
\Gamma_{\Rr^3}(x,z)=-\frac 1 {4\pi \sqrt{|x|^2+z^2}}.
$$
\begin{lemm}
                    \label{lemA1}
In $\T^2\times\Rr$, the fundamental solution has the representation
\begin{equation}
                \label{eq2.25}
\Gamma(x,z)=-\eta_1(x,z)\frac{1}{4\pi \sqrt{|x|^2+z^2}}+\eta_2(x,z)+\eta_3(x,z)|z|
\end{equation}
for smooth functions $\eta_1$, $\eta_2$, and $\eta_3$ in $\T^2\times \Rr$, where
\begin{itemize}
\item $\eta_1$ is supported in $B_1$ and equals to $1$ in $B_{1/2}$;
\item $\eta_2$ decays exponentially along with its derivatives as $|z|\to \infty$;
\item  $\eta_3$ is supported in $\{|z|\ge 1\}$ and equals to $\frac 1 {2(2\pi)^2}$ when $|z|\ge 10$.
\end{itemize}
\end{lemm}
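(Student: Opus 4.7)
My plan is to construct $\Gamma$ explicitly via Fourier series on $\T^2\times\Rr$ and then read off the decomposition by direct subtraction.

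First I would write
\[
\Gamma(x,z)=\frac12|z|-\sum_{k\in\Zz^2\setminus\{0\}}\frac{1}{4\pi|k|}\,e^{-2\pi|k||z|}\,e^{2\pi i k\cdot x},
\]
obtained mode-by-mode: the $k=0$ mode solves $\partial_z^2 u=\delta_0(z)$, giving $\tfrac12|z|$, while for $k\ne0$ the bounded solution of $(\partial_z^2-4\pi^2|k|^2)u=\delta_0(z)$ is $-\frac{1}{4\pi|k|}e^{-2\pi|k||z|}$. Convergence of the sum is routine for $|z|>0$, and $\Delta\Gamma=\delta_{(0,0)}(x)\delta_0(z)-\delta_0(z)$ in the sense of distributions on $\T^2\times\Rr$.

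Next I pick $\eta_1,\eta_3\in C^\infty(\T^2\times\Rr)$ with the stated support properties (with $\eta_1$ also being even in $z$ and having $\eta_1=1$ on $B_{1/2}$, and $\eta_3=1/2$ for $|z|\ge10$, vanishing for $|z|\le1$), and then simply define
\[
\eta_2(x,z):=\Gamma(x,z)+\eta_1(x,z)\,\frac{1}{4\pi\sqrt{|x|^2+z^2}}-\eta_3(x,z)|z|,
\]
so that the identity \eqref{eq2.25} holds by construction. What then has to be checked are the two claimed properties of $\eta_2$: smoothness on $\T^2\times\Rr$ and exponential decay of $\eta_2$ and all its derivatives as $|z|\to\infty$.

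For smoothness, the only candidate singular locus is the origin $(0,0)$ (using that on $\T^2=\Rr^2/\Zz^2$ the other lattice points lie outside $B_1$ so $\eta_1$ sees no other singularity of $1/|(x,z)|$). On the ball $B_{1/2}$ one has $\eta_1\equiv1$ and $\eta_3\equiv0$, so $\eta_2=\Gamma+\Gamma_{\Rr^3}$, and by the local PDE, $\Delta(\Gamma+\Gamma_{\Rr^3})=\Delta\Gamma-\delta_{(0,0)}=0$ on $B_{1/2}$, so $\eta_2$ is harmonic and hence $C^\infty$ there. Outside a fixed neighborhood of the origin $\Gamma$ itself is smooth (by the Fourier representation for $|z|$ bounded below, and elliptic regularity away from the source), while the remaining terms are manifestly smooth there. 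Gluing gives $\eta_2\in C^\infty(\T^2\times\Rr)$.

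For exponential decay, I would restrict to $|z|\ge10$, where $\eta_1\equiv0$ and $\eta_3\equiv\tfrac12$, so
\[
\eta_2(x,z)=-\sum_{k\in\Zz^2\setminus\{0\}}\frac{1}{4\pi|k|}\,e^{-2\pi|k||z|}\,e^{2\pi i k\cdot x}.
\]
Termwise differentiation yields, for each multi-index $\alpha$,
\[
|\partial^\alpha \eta_2(x,z)|\le C\sum_{k\ne0}|k|^{|\alpha|-1}e^{-2\pi|k||z|}\le C_\alpha\,e^{-\pi|z|},\qquad |z|\ge10,
\]
where for the last bound I would split off a factor $e^{-\pi|z|}$ and bound the remaining sum $\sum_{k\ne0}|k|^{|\alpha|-1}e^{-\pi|k||z|}$ by a convergent constant (using $|z|\ge10$). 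The intermediate range $1\le|z|\le10$ is harmless because $\eta_2$ is already smooth there on the compact set $\T^2\times[1,10]$ and is absorbed into the constant.

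The only real subtlety is checking the local harmonicity argument at the origin and keeping careful track of which cutoff is active in which region; once $\eta_1$ and $\eta_3$ are chosen with disjoint transition annuli from the origin and from infinity, everything decouples cleanly and the three bullet-point properties in the statement follow.
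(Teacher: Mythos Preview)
Your argument is essentially the same as the paper's: build $\Gamma$ via its Fourier expansion in $x$, solve the resulting ODEs in $z$ to get $\beta_0=\tfrac12|z|$ and $\beta_k(z)=-\tfrac{1}{4\pi|k|}e^{-2\pi|k||z|}$, choose $\eta_1,\eta_3$ with the stated supports, and define $\eta_2$ by subtraction; then verify smoothness by noting $\eta_2=\Gamma-\Gamma_{\Rr^3}$ is harmonic on $B_{1/2}$, and exponential decay from the tail series for $|z|$ large. One small slip: your formula $\Delta\Gamma=\delta_{(0,0)}(x)\delta_0(z)-\delta_0(z)$ is off---the Poisson summation gives $\Delta\Gamma=\delta_{(0,0)}(x)\delta_0(z)$ on $\T^2\times\Rr$ (there is no extra $-\delta_0(z)$), and indeed you use the correct identity later when checking harmonicity at the origin. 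The paper obtains the derivative decay slightly differently, observing that $\eta_2$ is harmonic for $|z|\ge10$ and invoking interior estimates, whereas you differentiate the series termwise; both are fine.
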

\begin{proof}
We follow the argument in \cite{Ammari}. By the definition of the fundamental solution, we have
\begin{equation}\label{eq1.34}
\begin{aligned}
  \Delta_{x,z} \Gamma(x,z)&=\sum_{m,n\in \Zz} \delta_0(x_1+m,x_2+n,z)\\
  &=\sum_{m,n\in \Zz} \delta_0(z)\delta_0(x_1+m)\delta_0(x_2+n)\\
&=\frac{1}{(2\pi)^2}\sum_{m,n\in \Zz}\delta_0(z)  e^{i (mx_1+nx_2)},
\end{aligned}
\end{equation}
where in the last equality we used the Poisson summation formula. Since $\Gamma$ is periodic in $x$ with period $1$, we also have
\begin{equation}
        \label{eq2.06}
\Gamma(x,z)=\sum_{m,n\in \Zz}\beta_{m,n}(z)e^{i(mx_1+nx_2)},
\end{equation}
which implies that
\begin{equation}\label{eq1.43}
  \Delta_{x,z} \Gamma(x,z)=\sum_{m,n\in \Zz}(\beta_{m,n}''(z)- (m^2+n^2))e^{i  (mx_1+nx_2)}.
\end{equation}
From \eqref{eq1.34} and \eqref{eq1.43}, we get the ODEs
$$
\beta_{m,n}''(z)- (m^2+n^2)\beta_{m,n}=\frac{1}{(2\pi)^2}\delta_0(z).
$$
Solving these ODEs and noting that $\beta_{m,n}$ is an even function by symmetry, we deduce that
\begin{equation}\label{eq1.48}
\beta_{0, 0}(z)=\frac 1 {2(2\pi)^2} |z|+c_0,\quad \beta_{m,n}(z)=-\frac 1 {2(2\pi)^2(m^2+n^2)^{1/2}} e^{(m^2+n^2)^{1/2}|z|}.
\end{equation}
Without loss of generality, we may assume that the constant $c_0=0$. It is easily seen that when $|z|\ge 1$,
\begin{equation}\label{eq1.58}
\sum_{m,n\in \Zz}\frac 1 {(m^2+n^2)^{1/2}} e^{-(m^2+n^2)^{1/2}|z|}\le Ce^{-c|z|},
\end{equation}
where $C,c>0$ are constants. Now we take two functions $\eta_1$ and $\eta_3$, which satisfies the conditions in the lemma. Let
$$
\eta_2(x,z)=\Gamma(x,z)+\eta_1(x,z)\frac{1}{4\pi \sqrt{|x|^2+z^2}}-\eta_3(x,z)|z|.
$$
Then in $B_1$, $\eta_2(x,z)=\Gamma(x,z)+\eta_1(x,z)\frac{1}{4\pi \sqrt{|x|^2+z^2}}$ and it is harmonic in $B_{1/2}$. Thus, $\eta_2$ is smooth in $B_1$. In $B_1^c$, by using \eqref{eq2.06} and \eqref{eq1.48},
$$
\eta_2(x,z)=\Gamma(x,z)-\eta_3(x,z) |z|=\left(\frac 1 {2(2\pi)^2}-\eta_3(x,z)\right)|z|+\sum_{(m,n)\neq (0,0)}\beta_{m,n}(z)e^{i 2\pi (mx_1+nx_2)},
$$
which is smooth and  decays exponentially as $|z|\to \infty$ in view of \eqref{eq1.58} and the condition of $\eta_3$. Finally, to see that all the derivatives of $\eta_2$ also decay exponentially as $|z|\to \infty$, it suffices to note that by its definition $\eta_2$ is harmonic when $|z|\ge 10$.
\end{proof}
In the following propositions, we discover delicate structures of the products
\[
\na_{x, z}\Gamma(x- x', f(x)-f(x'))\cdot N(x')\quad\text{and}\quad \nabla_{x,z} \Gamma(x-x',f(x)-f(x'))\wedge N(x),
\]
appearing in the integrals \eqref{DN:int} and \eqref{def:tt} used to express the Dirichlet-Neumann operator. These  structures  play an important role in obtaining the global solution in Theorem \ref{theo:main} as the limit of approximate solutions.
\begin{prop}\label{prop:fundamentalsol}
There exist a smooth bounded functions $F_1:  \Rr^3\to \Rr$ and  $F_2, F_3: \Rr^3\to \Rr^2$ with $F_j(\cdot, 0)=0$ such that
\bq
\begin{aligned}
&\partial_{N_{x'}}\Gamma(x-x',f(x)-f(x'))\\
&= \dv_{x'}\left(\frac {x-x'}{4\pi|x-x'|^2} \frac {f(x)-f(x')}{(|x-x'|^2+|f(x)-f(x')|^2)^{1/2}}\right)\\
&\quad+\dv_{x'}\left(\frac{x-x'}{|x-x'|^2}F_1(x-x', f(x)-f(x'))\right)+\frac{x-x'}{|x-x'|^2}\cdot F_2(x-x', f(x)-f(x'))\\
&\quad+\dv_{x'}F_3(x-x', f(x)-f(x'))+(\dv_x F_3)(x-x', f(x)-f(x'))
\end{aligned}
\eq
for $x\ne x'$, where $N_{x}=(-\nabla f(x),1)$.
\end{prop}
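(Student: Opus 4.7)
The plan is to expand $\partial_{N_{x'}}\Gamma$ using the decomposition $\Gamma=\eta_1\Gamma_{\Rr^3}+R$ from Lemma \ref{lemA1}, where $R:=\eta_2+\eta_3|z|$ is smooth and bounded on $\Rr^3$ (recall $\eta_3$ vanishes near $\{z=0\}$), and to match each of the five terms in the proposition by an explicit construction. The keystone is the pivotal pointwise identity, valid for $x\neq x'$,
\begin{equation*}
\partial_{N_{x'}}\Gamma_{\Rr^3}(x-x',f(x)-f(x'))=\dv_{x'}\!\left(\frac{x-x'}{4\pi|x-x'|^2}\,\frac{f(x)-f(x')}{d(x,x')}\right),\quad d:=\sqrt{|x-x'|^2+|f(x)-f(x')|^2}.
\end{equation*}
Its left-hand side reduces to $\frac{(f(x)-f(x'))-\nabla f(x')\cdot(x-x')}{4\pi d^3}$ using the explicit formulas for $\nabla\Gamma_{\Rr^3}$. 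On the right, the identity $\dv_{x'}\!\bigl(\frac{x-x'}{|x-x'|^2}\bigr)=0$ away from $x'=x$ in $\Rr^2$ lets the product rule reduce the divergence to $\frac{x-x'}{4\pi|x-x'|^2}\cdot\nabla_{x'}\!\bigl(\frac{f(x)-f(x')}{d}\bigr)$, and a short direct computation recovers the same kernel.

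Setting $M:=\frac{x-x'}{4\pi|x-x'|^2}\frac{f(x)-f(x')}{d}$, I apply Leibniz to $\partial_{N_{x'}}\Gamma=\partial_{N_{x'}}(\eta_1\Gamma_{\Rr^3})+\partial_{N_{x'}}R$ and then use the rearrangement $\eta_1\,\dv_{x'}M=\dv_{x'}M-\dv_{x'}((1-\eta_1)M)-M\cdot\nabla_{x'}[\eta_1(x-x',f(x)-f(x'))]$ to isolate the principal divergence term. The piece $-\dv_{x'}((1-\eta_1)M)$ then matches the $F_1$-term via
\begin{equation*}
F_1(y,z):=-(1-\eta_1(y,z))\,\frac{z}{4\pi\sqrt{|y|^2+z^2}},
\end{equation*}
which is smooth and bounded on $\Rr^3$ (since $1-\eta_1$ vanishes near the origin) and satisfies $F_1(\cdot,0)=0$. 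The remaining bounded terms, namely $M\cdot\nabla_{x'}[\eta_1(x-x',f-f)]$, $\Gamma_{\Rr^3}\,\partial_{N_{x'}}\eta_1$ (bounded because $\nabla\eta_1$ is supported where $\Gamma_{\Rr^3}$ is bounded), and $\partial_{N_{x'}}R$, combine into an expression of the form
\begin{equation*}
A(x-x',f(x)-f(x'))+\nabla f(x')\cdot B(x-x',f(x)-f(x'))
\end{equation*}
with smooth bounded $A\colon\Rr^3\to\Rr$ and $B\colon\Rr^3\to\Rr^2$.

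The structural observation that unlocks the proposition is $A(y,0)=0$. Algebraic simplification gives $A(y,0)=\partial_z\Gamma(y,0^+)$, which vanishes for $y\neq 0\pmod{\Zz^2}$ because $\Gamma$ is even in $z$ (its source lies on $\{z=0\}$). This lets me set $F_2^j(y,z):=y_j A(y,z)$, smooth bounded with $F_2(\cdot,0)=0$, yielding $\frac{x-x'}{|x-x'|^2}\cdot F_2(x-x',f-f)=A(x-x',f-f)$ and matching the third term. For the $\nabla f(x')$-coupled piece, a direct chain-rule computation shows
\begin{equation*}
\dv_{x'}F_3(x-x',f-f)+(\dv_x F_3)(x-x',f-f)=-\nabla f(x')\cdot(\partial_z F_3)(x-x',f-f),
\end{equation*}
so choosing $F_3(y,z):=-\int_0^z B(y,s)\psi(s)\,\di s$ for a smooth cutoff $\psi$ equal to $1$ on the range of $f(x)-f(x')$ produces a smooth bounded $F_3$ with $F_3(\cdot,0)=0$ and $-\partial_z F_3=B$ on the relevant range, completing the identification. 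The principal difficulty is the vanishing $A(y,0)=0$: without the $z$-reflection symmetry of $\Gamma$, the non-$\nabla f$ portion of the remainder could not be absorbed into $\frac{x-x'}{|x-x'|^2}\cdot F_2$ while preserving boundedness of that term; the rest is bookkeeping enabled by the support properties of $\nabla\eta_1$ localizing the singularity of $\Gamma_{\Rr^3}$.
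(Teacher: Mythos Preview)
Your proof is correct and follows essentially the same route as the paper: the key divergence identity for $\Gamma_{\Rr^3}$, the decomposition $\Gamma = \eta_1\Gamma_{\Rr^3} + (\eta_2 + \eta_3|z|)$, the product-rule rearrangement $\eta_1\dv_{x'}M = \dv_{x'}M - \dv_{x'}((1-\eta_1)M) - M\cdot\nabla_{x'}\eta_1$ isolating the principal and $F_1$ terms, and the $z$-antiderivative trick to convert $\nabla f(x')$-coupled pieces into the $F_3$ form.

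Your organization is somewhat tidier than the paper's: you collect all non-principal, non-$F_1$ remainders into $A + \nabla f(x')\cdot B$ and invoke the observation $A(y,0)=\partial_z\Gamma(y,0)=0$ (by the evenness of $\Gamma$ in $z$) to manufacture $F_2$, whereas the paper handles the pieces one at a time and simply declares several of them (the analogues of $I_{13}, I_{22}, I_{32}, I_{33}$) ``bounded'' without explicitly fitting them into the five-term template. One small technicality: your choice $F_2^j(y,z) = y_jA(y,z)$ is not bounded on all of $\Rr^3$, because $A$ contains the periodic-in-$y$ contribution $\partial_z R$, so $y_jA$ grows linearly in $|y|$; nor is it periodic in $y$. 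This is harmless in the torus setting where $|x-x'|$ stays bounded by the diameter of a fundamental domain, and it is entirely parallel to the paper's own looseness about the bounded remainders---it is worth flagging only because the proposition literally asks for $F_2$ smooth and bounded on $\Rr^3$. Finally, the cutoff $\psi$ in your $F_3$ is unnecessary: every summand of $B$ is either compactly supported or exponentially decaying in $z$, so $\int_0^z B(y,s)\,ds$ is already bounded without truncation.
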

\begin{proof}
Denoting  by $\Gamma_{\Rr^3}(x, z)=\frac{1}{4\pi}(|x|^2+z^2)^{-\mez}$ the fundamental solution of Laplace's equation on $\Rr^3$, we have
\begin{equation}
                            \label{eq2.14}
\begin{aligned}
&\partial_{N_{x'}}\Gamma_{\Rr^3}(x-x',f(x)-f(x'))=\frac 1 {4\pi}\dv_{x'}\big(\frac {x-x'}{|x-x'|^2} \frac {f(x)-f(x')}{(|x-x'|^2+|f(x)-f(x')|^2)^{1/2}}\big),\quad x\ne x'.
\end{aligned}
\end{equation}
Indeed, the left-hand side of \eqref{eq2.14} is equal to
\begin{equation}
                \label{eq2.17}
\frac {-\nabla f(x')\cdot (x-x')+f(x)-f(x')} {4\pi (|x-x'|^2+|f(x)-f(x')|^2)^{3/2}}.
\end{equation}
On the other hand, since $\text{div}_{x'}\frac {x-x'}{|x-x'|^2}=0$ for $x\ne x'$, the right-hand side of \eqref{eq2.14} is equal to
$$
\frac 1 {4\pi}\frac {x-x'}{|x-x'|^2}
\Big(\frac {(x-x')(f(x)-f(x'))-|x-x'|^2\nabla f(x')} {(|x-x'|^2+|f(x)-f(x')|^2)^{3/2}}
\Big),
$$
which is equal to \eqref{eq2.17}.


In the case when the domain is $\T^2\times\Rr$, we can do a similar computation. By using \eqref{eq2.25},
$$
\partial_{N_{x'}}\Gamma(x-x',f(x)-f(x'))=I_1+I_2+I_3,
$$
where
\begin{align*}
I_1&=\eta_1(x-x',f(x)-f(x'))\partial_{N_{x'}}\Gamma_{\Rr^3}(x-x',f(x)-f(x'))\\
&\quad -\Gamma_{\Rr^3}(x-x',f(x)-f(x'))\nabla f(x')\cdot (\nabla_{x}\eta_1)(x-x',f(x)-f(x'))\\
&\qquad +\Gamma_{\Rr^3}(x-x',f(x)-f(x'))(\partial_{z}\eta_1)(x-x',f(x)-f(x'))\big):=I_{11}+I_{12}+I_{13},
\end{align*}
\begin{align*}
I_2&=-\nabla f(x')\cdot (\nabla_{x}\eta_2)(x-x',f(x)-f(x'))
+(\partial_{z}\eta_2)(x-x',f(x)-f(x')):=I_{21}+I_{22},
\end{align*}
and
\begin{align*}
I_3&=-\nabla f(x')\cdot (\nabla_{x}\eta_3)(x-x',f(x)-f(x'))|f(x)-f(x')|\\
&\quad +(\partial_{z}\eta_3)(x-x',f(x)-f(x'))|f(x)-f(x')|+\eta_3(x-x',f(x)-f(x'))\sgn(f(x)-f(x'))\\
&:=I_{31}+I_{32}+I_{33}.
\end{align*}
Next, we estimate each term above. First, note that $I_{13}$, $I_{22}$, and $I_{33}$ are bounded. To estimate $I_{21}$, we define $F(x,z)=\int_0^z \na_x\eta_2(x,s)\,ds$, which is a smooth bounded function because $\eta_2$ and its derivatives decay exponentially as $|z|\to \infty$.  Moreover, $F(\cdot, 0)=0$. Then we have
\begin{align*}
&\dv_{x'} F(x-x',f(x)-f(x'))\\
&=-(\dv_x F)(x-x',f(x)-f(x'))-(\partial_{z}F)(x-x',f(x)-f(x'))\cdot\nabla f(x')\\
&=-(\dv_x F)(x-x',f(x)-f(x'))-(\na_{x}\eta_2)(x-x',f(x)-f(x'))\cdot\nabla f(x'),
\end{align*}
which implies that
$$
I_{21}=\dv_{x'} F(x-x',f(x)-f(x'))+(\dv_x F)(x-x',f(x)-f(x')).
$$
Since $\eta_3(x,z)$ is a constant when $|z|\ge 10$, it is easily seen that $I_{32}$ is also bounded. The estimates of $I_{31}$ and $I_{12}$ are similar to that of $I_{21}$ by replacing $\na_{x}\eta_2(x,z)$ with the smooth functions $\na_{x}\eta_3(x,z) |z|$ and $\Gamma_{\Rr^3}(x,z)\na_x \eta_1(x,z)$, respectively.

Finally, we rewrite $I_{11}$ has
\begin{align*}
 I_{11}
&=\frac 1 {4\pi}
 \eta_1(x-x',f(x)-f(x'))\dv_{x'}\left(\frac{x-x'}{|x-x'|^2} \frac {f(x)-f(x')}{(|x-x'|^2+|f(x)-f(x')|^2)^{1/2}}\right)\\
 &=\frac 1 {4\pi}
 \dv_{x'}\left(\frac {x-x'}{|x-x'|^2} \frac {f(x)-f(x')}{(|x-x'|^2+|f(x)-f(x')|^2)^{1/2}}\right)\\
 &\quad+\frac 1 {4\pi}
 \dv_{x'}\left([\eta_1(x-x',f(x)-f(x'))-1]\frac {x-x'}{|x-x'|^2} \frac {f(x)-f(x')}{(|x-x'|^2+|f(x)-f(x')|^2)^{1/2}}\right)\\
 &\quad+\frac 1 {4\pi}\frac {x-x'}{|x-x'|^2} \frac {f(x)-f(x')}{(|x-x'|^2+|f(x)-f(x')|^2)^{1/2}}\cdot(\na_x\eta_1)(x-x', f(x)-f(x'))\\
  &\quad+\frac 1 {4\pi}\frac {x-x'}{|x-x'|^2} \frac {f(x)-f(x')}{(|x-x'|^2+|f(x)-f(x')|^2)^{1/2}}\p_z\eta_1(x-x', f(x)-f(x'))\cdot\na f(x')\\
  &=I_{110}+I_{111}+I_{112}+I_{113}.
\end{align*}
Note that $I_{111}$ is the divergence (in $x'$) of the product of $\frac {x-x'}{|x-x'|^2}$ and a smooth bounded function of $(x-x', f(x)-f(x'))$, where the latter vanishes when the second variable vanishes.  Since
$$
(\partial_z \eta_1)(x,z) \frac {z}{(|x|^2+z^2)^{1/2}}
$$
is a smooth function supported in $B_{1}\setminus B_{1/2}$, arguing as in $I_{21}$, we can rewrite $I_{113}$ as
\begin{align*}
I_{113}&=\frac {x-x'}{|x-x'|^2}(\partial_z \eta_1)(x-x',f(x)-f(x'))\cdot\nabla f(x') \frac {f(x)-f(x')}{4\pi(|x-x'|^2+|f(x)-f(x')|^2)^{1/2}}\\
&=\frac {x-x'}{|x-x'|^2}\cdot\nabla_{x'} H(x-x',f(x)-f(x'))+\frac {x-x'}{|x-x'|^2}(\na_{x}H)(x-x',f(x)-f(x'))\\
&=\dv_{x'} \left(\frac{x-x'}{|x-x'|^2}H(x-x',f(x)-f(x'))\right)+\frac {x-x'}{|x-x'|^2}\cdot(\na_{x}H)(x-x',f(x)-f(x'))
\end{align*}
for a smooth bounded function $H$ satisfying $H(\cdot, 0)=0$. We have used the fact that $\frac {x-x'}{|x-x'|^2}$ is divergence-free in $x'\ne x$. Finally, we note that the second term on the right-hand and $I_{112}$ both have the form
\[
\frac{x-x'}{|x-x'|^2}\cdot F_2(x-x', f(x)-f(x'))
\]
for some smooth $F_2$ satisfying $F_2(\cdot, 0)=0$.
\end{proof}
\begin{prop}\label{prop:GwN}
There exist bounded continuous vector field $H$ and functions $L_j$, $K_j$ and $\wt{K}_j$, $j\in\{1, 2\}$, such that for $x\ne x'$ we have
\bq
\begin{aligned}
&\na_{x, z}\Gamma(x-x', f(x)-f(x'))\wedge N(x)\\
&=(I_1, I_2, I_3)+(J_1, J_2, J_3)+H(x-x', f(x)-f(x'))\\
&\qquad+\vec{k}\left\{\dv_x(L^\perp(x-x', f(x)-f(x')))-(\dv_xL^\perp)(x-x', f(x)-f(x'))\right.\\
&\qquad\qquad\left. -(\p_{x_1}\eta_1)(x-x', f(x)-f(x'))\frac{x_2-x_2'}{|x-x'|^2}(f(x)-f(x'))A^{-\mez}\right.\\
&\qquad\qquad\left.+(\p_{x_2}\eta_1)(x-x', f(x)-f(x'))\frac{x_1-x_1'}{|x-x'|^2}(f(x)-f(x'))A^{-\mez}\right\},
\end{aligned}
\eq
where $L^\perp=(-L_2, L_1)$,
\bq\label{def:AL}
A=|x-x'|^2+(f(x)-f(x'))^2,\quad L_j(x, z)=\frac{x_j}{|x|^2}\int_0^z\p_z\eta_1(x, s)s(|x|^2+s^2)^{-\mez}ds~\,(x\ne 0),
\eq
and
\bq
\begin{aligned}
&I_1=\p_{x_2}\Big(\eta_1(x-x', f(x)-f(x'))A^{-\mez}\Big),\quad I_2=-\p_{x_1}\Big(\eta_1(x-x', f(x)-f(x'))A^{-\mez}\Big),\\
& I_3=\dv_{x}\left(\eta_1(x-x', f(x)-f(x'))\frac{(x_2-x_2', -(x_1-x_1'))}{|x-x'|^2}(f(x)-f(x'))A^{-\mez}\right),
\end{aligned}
\eq
and
\bq
\begin{aligned}
J_1&=\p_2(K_1(x-x', f(x)-f(x'))),\quad J_2=\p_1(K_2(x-x', f(x)-f(x'))),\\
J_3&=\p_1(\wt{K}_1(x-x', f(x)-f(x')))+\p_2(\wt{K}_2(x-x', f(x)-f(x'))).
\end{aligned}
\eq
\end{prop}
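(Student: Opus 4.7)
The plan is to compute $\nabla_{x,z}\Gamma(x-x',f(x)-f(x'))\wedge N(x)$ directly from Lemma \ref{lemA1} and vector algebra, and to recognize the three claimed structural pieces by exploiting two elementary identities. Writing $y=x-x'$, $s=f(x)-f(x')$, $A=|y|^2+s^2$, the first is the chain rule
\[
\p_{x_j}\bigl[G(x-x',f(x)-f(x'))\bigr]=(\p_{y_j}G)(y,s)+(\p_z G)(y,s)\,\p_j f(x)
\]
for smooth $G$; the second is the divergence-free identity $\dv_x\bigl((y_2,-y_1)/|y|^2\bigr)=0$ together with the orthogonality $(y_2,-y_1)\cdot y=0$, valid for $y\ne 0$.

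First, the wedge formula yields the first two components
\[
(\na_{x,z}\Gamma\wedge N)_1=(\p_{y_2}\Gamma)(y,s)+(\p_z\Gamma)(y,s)\p_2 f(x),\qquad (\na_{x,z}\Gamma\wedge N)_2=-(\p_{y_1}\Gamma)(y,s)-(\p_z\Gamma)(y,s)\p_1 f(x),
\]
which by the chain rule collapse to the total $x$-derivatives $\p_{x_2}[\Gamma(x-x',f(x)-f(x'))]$ and $-\p_{x_1}[\Gamma(x-x',f(x)-f(x'))]$. Plugging in the representation $\Gamma=-\eta_1/(4\pi A^{1/2})+\eta_2+\eta_3|s|$ from Lemma \ref{lemA1} splits each into three pieces: the singular $\eta_1$ piece matches (up to $-1/(4\pi)$) the form $\p_{x_j}(\eta_1(x-x',f(x)-f(x'))A^{-1/2})$, giving $I_1,I_2$; the $\eta_2$ piece is immediately of the form $\p_{x_j}(K_j(x-x',f(x)-f(x')))$ for bounded continuous $K_j=\eta_2$, giving $J_1,J_2$; the $\eta_3|s|$ piece is globally bounded and continuous (thanks to $\eta_3$ being supported in $\{|z|\ge 1\}$ and constant for $|z|\ge 10$) and is absorbed into $H_1,H_2$.

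Next, the third component is $(\p_{y_2}\Gamma)(y,s)\p_1 f(x)-(\p_{y_1}\Gamma)(y,s)\p_2 f(x)$. For the singular part $-\eta_1\Gamma_{\mathbb R^3}$, I would compute $I_3$ directly: using $\dv_x((y_2,-y_1)/|y|^2)=0$ and $(y_2,-y_1)\cdot y=0$, the derivative falls only on $\eta_1 A^{-1/2}s$, and after collecting the factor $(y_2,-y_1)\cdot\na f(x)/|y|^2$ with the identity $A^{-1/2}-s^2 A^{-3/2}=|y|^2 A^{-3/2}$, one recovers precisely $\eta_1(-y_1\p_2 f+y_2\p_1 f)/A^{3/2}$, matching the leading singular contribution of the third component. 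The residues from $\na_x(\eta_1(y,s))=(\na_y\eta_1)(y,s)+(\p_z\eta_1)(y,s)\na f(x)$ split into (a) terms with $\na_y\eta_1$, which are bounded because $\na_y\eta_1$ is supported in $B_1\setminus B_{1/2}$, and are placed into $H_3$ or into the two explicitly written $(\p_{x_j}\eta_1)\cdots$ summands of the $L$-piece; and (b) a residue involving $(\p_z\eta_1)(y,s)\,\na f(x)$, to be treated below. The $\eta_2$ contribution to the third component is reorganised as $\p_1(\wt K_1(\cdot))+\p_2(\wt K_2(\cdot))$ using $z$-antiderivatives of $\p_{y_j}\eta_2$, yielding $J_3$ (here the exponential decay of $\eta_2$ ensures boundedness of $\wt K_j$); the $\eta_3|s|$ contribution is bounded and is absorbed into $H_3$.

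The main obstacle is the $(\p_z\eta_1)$ residue in (b), which is neither bounded nor obviously a divergence. The key device is the $z$-antiderivative
\[
L_j(x,z)=\frac{x_j}{|x|^2}\int_0^z (\p_z\eta_1)(x,\sigma)\,\sigma\,(|x|^2+\sigma^2)^{-1/2}\,d\sigma,
\]
chosen so that $\p_z L_j(x,z)=\frac{x_j}{|x|^2}(\p_z\eta_1)(x,z)\,z\,(|x|^2+z^2)^{-1/2}$ matches the residue on the nose. Applying the chain rule identity to $L^\perp(x-x',f(x)-f(x'))$ yields the commutator-like formula
\[
\dv_x\bigl(L^\perp(x-x',f(x)-f(x'))\bigr)-(\dv_x L^\perp)(x-x',f(x)-f(x'))=(\p_z L^\perp)(x-x',f(x)-f(x'))\cdot\na f(x),
\]
which is exactly the $(\p_z\eta_1)$ residue written with the sign pattern $(-(x_2-x_2'),x_1-x_1')/|x-x'|^2$. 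The last two terms in the $L$-piece of the proposition then collect the residual $\na_y\eta_1$ contributions that do not enter $\dv_x L^\perp$. Verifying these cancellations is a careful but mechanical accounting, with no further analytic input beyond Lemma \ref{lemA1} and the two identities above.
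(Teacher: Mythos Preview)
Your proposal is correct and follows essentially the same route as the paper: decompose $\nabla_{x,z}\Gamma$ via Lemma \ref{lemA1}, recognize $I_1,I_2,I_3$ as total $x$-derivatives of the singular $\eta_1$ piece (the paper verifies \eqref{naGwN} and then multiplies by $\eta_1$ exactly as you do), and absorb the $\partial_z\eta_1$ residue in the third component with the same $z$-antiderivative $L_j$ and the chain-rule commutator identity you wrote. One small slip: the $\eta_3|z|$ contributions to each component carry a factor of $\nabla f(x)$ through the chain rule and therefore cannot sit inside $H$, which by hypothesis depends only on $(x-x',f(x)-f(x'))$; they should instead be folded into $K_j,\widetilde K_j$ (this is harmless since $\eta_3(x,z)|z|$ and the needed $z$-antiderivatives are bounded and continuous).
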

\begin{proof}
We write
\[
\na_{x, z}\Gamma(x, z)=\eta_1\na_{x, z}\Gamma_{\Rr^3}+\na_{x, z}\eta_1\Gamma_{\Rr^3}+\na_{x, z}\eta_2+\na_{x, z}\eta_3|z|+\na_{x, z}|z|\eta_3.
\]
For any function $\eta:\T^2\times \Rr\to \Rr$ we have
\[
\begin{aligned}
&(\na_{x, z}\eta)(x-x', f(x)-f(x'))\wedge N(x)\\
&=\vec{i}\{(\p_2\eta)+(\p_z\eta)\p_2f(x)\}-\vec{j}\{(\p_1\eta)+(\p_z\eta)\p_1f(x) \}+\vec{k}\{-(\p_1\eta)\p_2f(x)+(\p_2\eta)\p_1f(x)\},
\end{aligned}
\]
where $\eta$ and its derivatives are evaluated at $(x-x', f(x)-f(x'))$. Consequently, for any function $E:  \Rr^3\to \Rr$,
\bq\label{naetawN}
\begin{aligned}
&E(x-x', f(x)-f(x'))(\na_{x, z}\eta)(x-x', f(x)-f(x'))\wedge N(x)\\
&\quad=H^0()+\left(\p_2(K^0_z()), -\p_1(K^0_z()), \p_1(K^0_2())-\p_2(K^0_1())\right),
\end{aligned}
\eq
where $()=(x-x', f(x)-f(x'))$,
 \[
 K^0_\iota(x, z)=\int_0^z(E\p_\iota\eta)(x, s)ds,\quad\iota\in\{1, 2, z\},
 \]
 and each component of the vector field $H^0$ is a linear combination of  $(E\p_j\eta)(x-x', f(x)-f(x'))$ and $(\p_{j'}K^0_{\iota})(x-x', f(x)-f(x'))$ with $j, j'\in  \{1, 2\}$. Applying \eqref{naetawN} to
 \[
 (\eta, E)\in \{(\eta_1, \Gamma_{\Rr^3}), (\eta_2, 1), (\eta_3, |z|), (|z|, \eta_3)\}
 \]
 and  using properties of the $\eta_j$'s we find that $H^0$ and the $K^0_\iota$'s are continuous and bounded.  It remains to treat the most singular term $\eta_1\na_{x, z}\Gamma_{\Rr^3}$. It can be directly checked that
\bq\label{naGwN}
\na_{x, z}\Gamma_{\Rr^3}(x-x', f(x)-f(x'))\wedge N(x)=(I^0_1, I^0_2, I^0_3),
\eq
where, with $A=|x-x'|^2+(f(x)-f(x'))^2$, we have
$$
\begin{aligned}
&I^0_1=-A^{-\tdm}\{x_2-x_2'+\p_2f(x)(f(x)-f(x'))\}=\p_{x_2}A^{-\mez},\\
&I^0_2=A^{-\tdm}\{x_1-x_1'+\p_1f(x)(f(x)-f(x'))\}=-\p_{x_1}A^{-\mez},\\
&I^0_3=A^{-\tdm}\{\p_2f(x)(x_1-x_1')-\p_1f(x)(x_2-x_2')\}\\
&\qquad=\dv_{x}\left(\frac{(x_2-x_2', -(x_1-x_1'))}{|x-x'|^2}(f(x)-f(x'))A^{-\mez}\right).
\end{aligned}
$$
Consequently,
\[
(\eta_1\na_{x, z}\Gamma_{\Rr^3})(x-x', f(x)-f(x'))\wedge N(x)=(I'_1, I'_2, I'_3),
\]
where
$$
\begin{aligned}
&I'_1=\p_{x_2}(\eta_1()A^{-\mez})-(\p_{x_2}\eta_1)()A^{-\mez}-(\p_z\eta_1)()\p_{x_2}f(x)A^{-\mez},\\
&I'_2=-\p_{x_1}(\eta_1()A^{-\mez})+(\p_{x_1}\eta_1)() A^{-\mez}+(\p_z\eta_1)()\p_{x_1}f(x) A^{-\mez},\\
&I'_3=\dv_{x}\left(\eta_1()\frac{(x_2-x_2', -(x_1-x_1'))}{|x-x'|^2}(f(x)-f(x'))A^{-\mez}\right)\\
&\qquad-\left[(\p_{x_1}\eta_1)()+(\p_z\eta_1)()\p_{x_1}f(x)\right]\frac{x_2-x_2'}{|x-x'|^2}(f(x)-f(x'))A^{-\mez}\\
&\qquad +\left[(\p_{x_2}\eta_1)()+(\p_z\eta_1)()\p_{x_2}f(x)\right]\frac{x_1-x_1'}{|x-x'|^2}(f(x)-f(x'))A^{-\mez}.
\end{aligned}
$$
Since $\eta_1=1$ on $B_{1/2}$, $(\p_{x_j}\eta_1)()A^{-\mez}$ are smooth functions of $(x-x', f(x)-f(x'))$.

Let
\begin{align*}
&L_0(x, z)=\int_0^z\p_z\eta_1(x, s)(|x|^2+s^2)^{-\mez}ds,\quad L_j(x, z)=\frac{x_j}{|x|^2}\int_0^z\p_z\eta_1(x, s)s(|x|^2+s^2)^{-\mez}ds,
\end{align*}
where $L_0$ is smooth and bounded since $\supp \na \eta_1\subset B_1\setminus B_{1/2}$. Then, we can rewrite the $I'_j$'s as
\bq\label{etanaGwN:123}
\begin{aligned}
&I'_1=\p_{x_2}(\eta_1()A^{-\mez})-(\p_{x_2}\eta_1)()A^{-\mez}-\p_{x_2}(L_0())+(\p_{x_2}L_0)(),\\
&I'_2=-\p_{x_1}(\eta_1()A^{-\mez})+(\p_{x_1}\eta_1)() A^{-\mez}+\p_{x_1}(L_0())-(\p_{x_1}L_0)(),\\
&I'_3=\dv_{x}\left(\eta_1()\frac{(x_2-x_2', -(x_1-x_1'))}{|x-x'|^2}(f(x)-f(x'))A^{-\mez}\right)\\
&\qquad-\p_{x_1}(L_2())+(\p_{x_1}L_2)()+\p_{x_2}(L_1())-(\p_{x_2}L_1)()\\
&\qquad-(\p_{x_1}\eta_1)()\frac{x_2-x_2'}{|x-x'|^2}(f(x)-f(x'))A^{-\mez}+(\p_{x_2}\eta_1)()\frac{x_1-x_1'}{|x-x'|^2}(f(x)-f(x'))A^{-\mez}.
\end{aligned}
\eq
\end{proof}
\section{Invertibility of boundary layer potentials}\label{appendix:inverses}
We recall that the single layer potential $\cS$ is given by \eqref{singlelayer}, and the boundary single layer potential $S$ is given by \eqref{singlelayer2}, where $Sh=(\cS h)\vert_{\p\Omega_f}$.  By  \cite[Theorem 1.11]{Vorcheta}, we have
\begin{align}
&\p_N\cS h:=\lim_{X \xrightarrow{\text{i}} P}N(P)\cdot \na \cS h(X)=-(\mez I-K^*)h,\label{pNS}\\
&\p_N^c\cS h:=\lim_{X \xrightarrow{\text{e}} P}N(P)\cdot \na \cS h(X)=(\mez I+K^*)h,\label{pNcS}\\
&h=\p_N^c\cS h-\p_N\cS h. \label{jump:S}
\end{align}
On the other hand, the double layer potential $\cK$ and the boundary double layer potential $K$ are defined by \eqref{def:cK} and \eqref{def:K}, respectively. By \cite[Theorem 1.11]{Vorcheta}, the limits of $\cK h$ from the interior and the exterior of $\Omega_f$ are given by
\bq\label{lim:cK}
\lim_{(x, y)\xrightarrow{\text{i}} (x_0, f(x_0))}\phi(x, y)=(\mez I+K)h(x_0),\quad \lim_{(x, y)\xrightarrow{\text{e}} (x_0, f(x_0))}\phi(x, y)=-(\mez I-K)h(x_0).
\eq
For $g: \T^2\to \Rr$, we denote the integral mean of $g$ by 
\[
\langle g\rangle:=(2\pi)^{-2}\int_{\T^2}g.
\]
We will use the notation 
\[
L^2_0(\T^2)=\left\{h\in L^2(\T^2): \langle h\rangle=0\right\}.
\]
\begin{lemm}\label{lemm:K1}
\bq\label{K1}
 \int_{\T^2}K^*h=0\quad\forall h\in L^2(\T),\quad K1\equiv 0.
\eq
\end{lemm}
\begin{proof}
Using \eqref{pNS} and the divergence theorem, we find
\bq\label{div:Sh}
-\int_{\T^2} (\mez I-K^*)h dx=\int_{\T^2} \p_N\cS (x,f(x))h dx=\int_{\{-M<y<f(x)\}}\Delta \cS h dxdy+\int_{y=-M} \p_y \cS h dx=\int_{y=-M} \p_y \cS h dx
\eq
for any $M>\| f\|_{L^\infty(\T^2)}$.  By inspecting the formula \eqref{lemA1}  of $\Gamma$, we see that $\Gamma(x, z)\sim \frac{1}{2(2\pi)^2}|z|$ as $|z|\to \infty$, and hence
 \bq\label{behave:S} 
(\cS h)(x, y)\sim |y|\frac{1}{2(2\pi)^2}\int_{\T^2}h-(\sgn y)\frac{1}{2(2\pi)^2}\int_{\T^2}(fh)\quad  \text{as}~|y|\to \infty.
\eq
Thus, letting $M\to \infty$ in \eqref{div:Sh} yields
\bq\label{mean:mezK*} 
\int_{\T^2}(\mez I-K^*)h dx=\frac{1}{2(2\pi)^2}\int_{\T^2}hdx |\T^2|=\mez\int_{\T^2}h, 
\eq
which implies the first equality in \eqref{K1}. The second equality follows from duality.
\end{proof}
\begin{prop}\label{prop:invertL2}
The operators $\mez I\pm K^*$ are invertible on $L^2_0(\T^2)$, and the operators $\mez I\pm K$ and $\mez I\pm K^*$ are invertible on $L^2(\T^2)$. Moreover, the norm of these operators and their inverses depend only on $\| f\|_{\Lip(\T^2)}$. 
\end{prop}
\begin{proof}
Arguing as in the proof of \cite[Proposition 4.1]{DGN}, one can establish the invertibility of
$
\mez I\pm K^*
$
on \(L^2_0(\T^2)\), together with the stated quantitative bounds on the norms of their inverses. In the present setting, the argument requires the following Poincaré-type inequality for traces of the form \(H(x,f(x))\), expressed in terms of tangential derivatives:
\[
\left\|
H(\cdot,f(\cdot))
-
\left\langle H(\cdot,f(\cdot))\right\rangle
\right\|_{L^2(\T^2)}
\leq
C
\left\|
\nabla_T H(\cdot,f(\cdot))
\right\|_{L^2(\T^2)}.
\]

For any $g\in L^2(\T^2)$, we write $g=g_0+a$, $a=\langle g\rangle$, so that $g_0\in L^2_0(\T^2)$. To find an $h\in  L^2(\T^2)$ satisfying 
\begin{equation}
						\label{eq3.47}
	(\mez +K^*)h=g.
\end{equation} 
we similarly write $h=h_0+b$, where $b=\langle h\rangle$. 
Then \eqref{eq3.47} is equivalent to
\begin{equation}
	\label{eq3.48}
	(\mez +K^*)h_0+\mez b+bK^*1 =g_0+a.
\end{equation}
By \eqref{K1}, the first and the last term on the left-hand side above have zero mean, so \eqref{eq3.48} is equivalent to $b=2a$ and $h_0=(\mez +K^*)^{-1}(g_0-2aK^*1)\in L^2_0(\T^2)$.
Since $h$ is uniquely determined,  $\mez I+K^*$ is invertible on $L^2(\T^2)$, and so is $\mez I-K^*$ by an analogous argument, with the claimed quantitative bound for their norms. Finally, $\mez I\pm K$ are invertible on $L^2(\T^2)$ by duality. 
\end{proof}

Next we define $\bar S: L^2_0(\T^2)\to L^2_0(\T^2)\cap H^1(\T^2)$ by
$$
\bar S h=Sh-\langle Sh\rangle,
$$
so that $\langle \bar Sh\rangle =0$.  We also denote $\bar \cS h=\cS h-\langle Sh\rangle$ in $\T^2\times \Rr$.
\begin{prop}
				\label{propA.6}
$\bar S: L_0^2(\T^2)\to L^2_0(\T^2)\cap H^1(\T^2)$ is invertible, with the norm of its inverse depending only on $\| f\|_{\Lip(\T^2)}$.
\end{prop}

\begin{proof}
1. Injectivity. Suppose that $h\in L_0^2(\T^2)$ and $\bar Sh=0$.  As $\int_{\T^2}h=0$,  \eqref{behave:S} implies hat $(\bar \cS h)(x, y)$ tends to constants as $y\to \pm \infty$. Moreover, $\bar \cS h$ is harmonic in $\{y<f(x)\}$ and $(\bar \cS h)_{\p\Omega_f}=\bar Sh=0$. Hence, using the divergence theorem, we get
$$
\int_{{-M<y<f(x)}}|\nabla \bar \cS h|^2 dxdy
=-\int_{y=-M}\bar \cS h \partial_y (\bar \cS h) dx\to 0\quad\text{as}\,M\to \infty.
$$
Therefore, $\bar \cS h$ is constant in $\{y<f(x)\}$. Similarly, $\bar \cS h$ is constant in $\{y>f(x)\}$. It then follows from \eqref{jump:S} that $h\equiv 0$.


2. Surjectivity. Let $g\in L^2_0(\T^2)\cap H^1(\T^2)$. We shall prove that there exists $h\in L_0^2(\T^2)$ such that $\bar Sh=g$. Let $\phi=\cK(\mez  I+K)^{-1}g$ be the Poisson extension of $g$.  We have 
\begin{equation}
					\label{eq5.09}
\|\p_N\phi\|_{L^2(\Omega_f)}\le C(\| f\|_{W^{1, \infty}(\T^2)})\| g\|_{H^1(\T^2)}.
\end{equation}
Moreover, \eqref{pNS} implies that $-\p_N \cS (\mez I-K^*)^{-1}\p_N\phi=\p_N\phi$. Thus both $\tilde{\phi}:=-\cS (\mez I-K^*)^{-1}\p_N\phi$ and $\phi$ are harmonic functions in $\Omega_f$ with the same normal derivative on $\p \Omega_f$. Moreover, as $y\to -\infty$ we have
\begin{align*} 
\phi(x, y) \sim c:=-\frac{1}{2(2\pi)^2}\int_{\T^2}(\mez I+ K)^{-1}g,\\
\tilde{\phi}(x, y)\sim \tilde{c}|y|-c',\quad \tilde{c}=\frac{-1}{2(2\pi)^2}\int_{\T^2}(\mez I-K^*)^{-1}\p_N\phi,\quad c':=\frac{-1}{2(2\pi)^2}\int_{\T^2}f (\mez I-K^*)^{-1}\p_N\phi.
\end{align*}  
Since $\p_y\phi(x, y)\to 0$ as $y\to -\infty$, we have 
\[
\int_{\T^2} \p_N\phi dx=\lim_{M\to \infty}\left(\int_{\{-M<y<f(x)\}}\Delta \phi dxdy+\int_{\T^2} \p_y\phi(x, -M)dx\right)=0.
\] 
Thus restricted to $\p \Omega_f$, $\partial_N\phi, (\mez I-K^*)^{-1}\p_N\phi\in L^2_0(\T^2)$ and 
$\tilde{c}=0$  by  Proposition \ref{prop:invertL2}.  Hence, $\phi_1:=\tilde\phi -\phi$ is harmonic in $\Omega_f$, $\p_N\phi_1=0$ on $\p \Omega_f$,  and $\phi_1\sim  -(c+c')$ as $y\to -\infty$. It follows that 
\begin{multline*}
\int_{\Omega_f}|\na (\phi_1+c+c')|^2=\lim_{M\to \infty} \int_{\{-M<y<f(x)\}}|\na (\phi_1+c+c')|^2\\
=-\lim_{M\to \infty} \int_{\T^2}(\phi_1+c+c')\p_y(\phi_1+c+c')\vert_{y=-M} =0.
\end{multline*}
Thus $\phi_1\equiv -(c+c')$. Restricted to $\p \Omega_f$, since both $g$ and $-\bar S(\mez I-K^*)^{-1}\p_N\phi$ have zero mean, they must be identical. Therefore, $h=-(\mez I-K^*)^{-1}\p_N\phi$ satisfies $\bar Sh=g$.

Finally, the claimed quantitative bound follows from \eqref{eq5.09} and Proposition \ref{prop:invertL2}. The proposition is proved.
\end{proof}
For bounded Lipschitz domains of $\Rr^n$,  the Calderon identity $SK^*=KS$ holds (see the proof of Theorem 3.3 in \cite{Vorcheta}). For our unbounded domain $\Omega_f$, we have
\begin{lemm}
For $h\in L^2(\T^2)$, we have 
\bq\label{formula:KS}
(\mez I+K)Sh(x)=S(\mez I+K^*)h(x)+\frac14 f(x)\langle h\rangle-\frac14 \langle fh\rangle.
\eq
\end{lemm}
\begin{proof}
For $M>\| f\|_{L^\infty}$, we consider the truncated exterior domain 
\[
\Omega^M:=\{X=(x, y): x\in \T^2, f(x)<y<M\}
\]
and denote the exterior unit normal by $\nu$. For $v: \T^2\times \Rr$ harmonic, the Green formula for $\Omega^M$ reads 
\[
v(X)=\int_{\p \Omega^M}\p_{\nu(X')}\Gamma(X'-X)v(X')-\Gamma(X'-X)\p_{\nu(X')} v(X')dX',\quad X\in \Omega^M.
\]
Since $N=(-\na_xf, 1)$ is inward pointing for $\Omega^M$, we obtain
\bq\label{Green:v}
\begin{aligned}
	v(X)&=-\int_{\T^2}\p_{N(x')}\Gamma((x', f(x'))-X)v(x', f(x'))-\Gamma((x', f(x'))-X)\p^c_N v(x', f(x'))dx' \\
	&\quad+ \int_{\T^2}\p_y\Gamma((x', M)-X)v(x', M)-\Gamma((x', M)-X)\p_y v(x', M)dx'\\
	&=-(\cK v)(X)+(\cS\partial_N^c v)(X)\\
	&\quad+ \int_{\T^2}\p_y\Gamma(x'-x, M-y)v(x', M)-\Gamma(x'-x, M-y)\p_y v(x', M)dx'.
\end{aligned}
\eq
We set $v=\cS h$, fix $X=(x, y)\in \Omega^M$, and let $M\to \infty$. Using the asymptotic behavior of $\Gamma(x, y)$ and $(Sh)(x, y)$ (see \eqref{behave:S}), we find  
\begin{align*}
&\int_{\T^2}\p_y\Gamma(x'-x, M-y)v(x', M)-\Gamma(x'-x, M-y)\p_y v(x', M)dx'\\
&\quad\sim \int_{\T^2}  \frac{1}{2(2\pi)^2}\big(\mez \langle h\rangle M-\mez \langle fh\rangle\big)-\frac{1}{2(2\pi)^2}(M-y)\mez \langle h\rangle dx'=\frac14 y\langle h\rangle-\frac14 \langle fh\rangle. 
\end{align*}
Therefore, letting $M\to \infty$ in \eqref{Green:v} yields
\[
v(X)=-(\cK v)(X)+(\cS\partial_N^c v)(X)+\frac14 y\langle h\rangle-\frac14 \langle fh\rangle
\]
for any $X=(x, y)$, $y>f(x)$. Then, we let $X=(x, y)\to (x, f(x))$ from the exterior of $\Omega_f$ and invoke the formulas \eqref{pNcS} and  \eqref{lim:cK} to have 
\[
Sh(x)=(\mez I-K)Sh+S(\mez I+K^*)h(x)+\frac14 f(x)\langle h\rangle-\frac14 \langle fh\rangle, 
\]
which is equivalent to  \eqref{formula:KS}. 
\end{proof}

To prove that $\mez I+K$ is a bounded and invertible operator on $H^1(\T^2)$, for $\lambda\ge 0$ we define $S_\lambda: L^2(\T^2)\to H^1(\T^2)$ by
$$
S_\lambda h=Sh-\lambda \langle h \rangle.
$$
Using Lemma \ref{lemm:K1}, we find that $S_\ld$ satisfies the identity \eqref{formula:KS}: 
\begin{equation}
	\label{eq5.49}
(\mez I+K)S_\lambda h=S_\lambda (\mez I+K^*) h+\frac14 f(x)\langle h\rangle-\frac14 \langle fh\rangle.
\end{equation}
\begin{prop}
	\label{propA.7}
There is a sufficiently large $\lambda$ depending on $\|f\|_{\Lip}$ such that the operator $S_\lambda$ is invertible from $L^2(\T^2)$ to $H^1(\T^2)$ with the norm of its inverse depending only on $\| f\|_{\Lip}$.
\end{prop}
\begin{proof}
Let $g\in H^1(\T^2)$. Due to Proposition \ref{propA.6}, we can write
\begin{equation}
				\label{eq5.56}
g=\bar S \hat h+\langle g \rangle,
\end{equation}
where
$$
\hat h=\bar S^{-1}(g-\langle g \rangle)\in L^2_0(\T^2)\cap H^1(\T^2),\quad \|\hat h\|_{H^1}\le C(\|f\|_{Lip})\|g\|_{H^1(\T^2)}.
$$
To find a $h\in L^2(\T^2)$ such that $S_\lambda h=g$, we write $h=h_0+b$, where $h_0\in L^2_0(\T^2)$ and $b=\langle h \rangle$. Then using \eqref{eq5.56}, $S_\lambda h=g$ is equivalent to
$$
\bar S h_0+\langle S h_0\rangle+b S1-\lambda b
=\bar S \hat h+\langle g \rangle,
$$
which can be rewritten into
$$
\bar S h_0-\lambda b
=\bar S \hat h-b(S1-\langle S1\rangle)
+\langle g \rangle-(\langle S h_0\rangle+b \langle S1\rangle),
$$
Since the first term on the left-hand side and first two terms on the right-hand side have zero mean, the equation above is then equivalent to
$$
b=\lambda^{-1}\Big(-\langle g \rangle+\langle S h_0\rangle+b \langle S1\rangle\Big), \quad \bar Sh_0=\bar S \hat h-b(S1-\langle S1\rangle).
$$ 
By Proposition \ref{propA.6}, from the second equation above we have
\begin{equation}
	\label{eq6.11}
h_0=\hat h-b\bar S^{-1}(S1-\langle S1\rangle).
\end{equation}
Plugging this into the first equation, we get
$$
b=\lambda^{-1}\Big(-\langle g \rangle+\langle S (\hat h-b\bar S^{-1}(S1-\langle S1\rangle))\rangle+b \langle S1\rangle\Big).
$$ 
Recalling that the operator norms of $S$ and $\bar S^{-1}$ only depend on $\|f\|_{\Lip(\T^2)}$, we can uniquely solve for $b$ when $\lambda$ is sufficiently large depending on $\|f\|_{\Lip(\T^2)}$. After that, $h_0$ is determined using \eqref{eq6.11}. The proof above shows that $\|h_0\|_{L^2(\T^2)}$ and $|b|$ are bounded  by $C(\|f\|_{\Lip})\|g\|_{H^1(\T^2)}$ and such $h=h_0+b$ is unique. The proposition is proved.
\end{proof}
\begin{theo}\label{theo:Kinv}
(i) The operators $\mez I\pm K: H^1(\T^2)\to H^1(\T^2)$ are invertible, with the operator norms of the inverse depending only on $\|f\|_{\Lip}$.

(ii) There exists a small $\varepsilon\in (0,1)$ depending only on $\|f\|_{\Lip}$ such that for any $p\in (2-\varepsilon,2+\varepsilon)$, the operators $\mez I\pm K: W^{1,p}(\T^2)\to W^{1,p}(\T^2)$ are invertible, with the operator norms of the inverse depending only on $\|f\|_{\Lip}$.
\end{theo}
\begin{proof}
We only consider $\mez I+ K$ as $\mez I-K$ can be treated similarly. 

(i) By \eqref{eq5.49} and Proposition \ref{propA.7}, it suffices to show that for any $g\in H^1$, we can find a unique $h\in L^2(\T^2)$ such that 
\begin{equation}
	\label{eq5.49b}
	S_\lambda (\mez I+K^*) h+\frac14 (f\langle h\rangle- \langle f h\rangle)=g
\end{equation}
and $\|h\|_{L^2(\T^2)}\le C(\|f\|_{\Lip})\|g\|_{H^1(\T^2)}$. As in the proof of Proposition \ref{propA.7}, we write $g=g_0+\langle g\rangle$ and $h=h_0+b$, where $b=\langle h\rangle$. Then, \eqref{eq5.49b} can be rewritten to
\bq
\bar S((\mez I+K^*) h_0)+\langle S((\mez I+K^*) h_0)\rangle
+b S((\mez I+K^*) 1)-\mez\lambda b+\frac{1}{4}b(f-\langle f\rangle)-\frac14\langle fh_0\rangle=g_0+\langle g\rangle,
\eq
which is equivalent to
\begin{equation}
\left\{
\begin{aligned}
&\mez \lambda b=\langle S((\mez I+K^*) h_0)\rangle
+b \langle S((\mez I+K^*) 1)\rangle -\frac14\langle fh_0\rangle-\langle g\rangle\\
&\bar S((\mez I+K^*) h_0)=g_0-bS((\mez I+K^*) 1)+b \langle S((\mez I+K^*) 1)\rangle -\frac{1}{4}b(f-\langle f\rangle).
\end{aligned}
\right.
\end{equation}
Now from the second equation above as well as the invertibility of $\bar S$ and $\mez I+K^*$, we can express $h_0$ in terms of $b$:
\begin{equation}
					\label{eq8.26}
h_0=(\mez I+K^*)^{-1}\bar S^{-1}\Big(g_0-bS((\mez I+K^*) 1)+b \langle S((\mez I+K^*) 1)\rangle -\frac{1}{4}b(f-\langle f\rangle)\Big).
\end{equation}
Plugging this into the first equation and choosing $\ld$ sufficiently large, we obtaine a unique solution $b$, with $|b|\le C(\|f\|_{\Lip})\|g\|_{H^1(\T^2)}$. Finally, $h_0$ is determined by \eqref{eq8.26}.

(ii) We fix the constant $\lambda$ from (i). By the Shneiberg stability theorem (see \cite{Sh74} and the Appendix of \cite{ABES}),
it follows from the boundedness of $\mez I+K, \mez I+K^*$ on $L^p(\T^2)$ as well as their invertibility on $L^2(\T^2)$ shown in (i) that there exists a small $\varepsilon\in (0,1)$ depending only on $\|f\|_{\Lip}$ such that for any $p\in (2-\varepsilon,2+\varepsilon)$, $\mez I+K, \mez I+K^*$ are invertible on $L^p(\T^2)$, with the operator norms of the inverse depending only on $\|f\|_{\Lip}$. Similarly, $\bar S:L_0^p(\T^2)\to L_0^p(\T^2)\cap W^{1,p}(\T^2)$ and $S_\lambda: L^p(\T^2)\to W^{1,p}(\T^2)$ are invertible for the same range of $p$. Then we can repeat the proof in (i) with $L^p$ in place of $L^2$ to conclude the invertibility of  $\mez I+K$ on $W^{1,p}(\T^2)$.
\end{proof}
\section{The Dirichlet-Neumann operator for the sphere}\label{appendix:sphere}
The following proposition  provides an explicit integral formula for the Dirichlet-Neumann operator for the unit ball in $\Rr^3$.
\begin{prop}\label{prop:DNsphere}
Let $B_1$ be the unit ball in $\Rr^n$, and let $g\in W^{1, \infty}(\p B_1)$ be $C^{1, \alpha}$ at $x_*=(1, 0, \dots, 0)$. Let $u\in C(\overline{B_1})$ be the unique solution of the Dirichlet problem
\bq
\Delta u=0\quad\text{in } B_1,\quad u\vert_{\p B_1}=g.
\eq
Then we have
\bq\label{DN:sphere}
\lim_{r\to 1^{-}}\na u(rx_*)\cdot (rx_*)=\frac{-1}{n\alpha(n)}\int_{\p B_1}\frac{g(y)+g(\wt y)-2g(x_*)}{|y-x_*|^3}d\sigma(y),
\eq
where $\alpha(n)$ is the volume of $B_1$, and $\wt y=(y_1, -y_2, \dots, -y_n)$ if $y=(y_1, y_2, \dots, y_n)$.
\end{prop}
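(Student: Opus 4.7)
The plan is to derive the formula from the Poisson integral representation of $u$ by exploiting the axial symmetry of $B_1$ about the $x_1$-axis through $x_*$, and then identify the resulting one-sided difference-quotient limit with the radial derivative limit via pointwise boundary $C^{1,\alpha}$ regularity.

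First, I would start from Poisson's formula $u(x)=\frac{1-|x|^2}{n\alpha(n)}\int_{\partial B_1} g(y)|x-y|^{-n}\,d\sigma(y)$ and note that the involution $y\mapsto \wt y$ preserves both $d\sigma$ and $|rx_*-y|$. Symmetrizing $g$ with its reflection and subtracting the Poisson integral of the constant $g(x_*)$ (which equals $g(x_*)$) gives
$$u(rx_*)-g(x_*)=\frac{1-r^2}{2n\alpha(n)}\int_{\partial B_1}\frac{h(y)}{|rx_*-y|^n}\,d\sigma(y), \qquad h(y):=g(y)+g(\wt y)-2g(x_*).$$
Next, I would verify that $h(y)=O(|y-x_*|^{1+\alpha})$ near $x_*$: the pointwise $C^{1,\alpha}$ expansion of $g$ at $x_*$ combined with $g(\wt y)$ causes the tangential linear contributions (in $e_2,\dots,e_n$) to cancel, while the remaining normal linear part $2\partial_1 g(x_*)(y_1-1)$ is $O(|y-x_*|^2)$ because $y_1-1=-|y-x_*|^2/2$ on $\partial B_1$.

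Then I would form the difference quotient of the formula above using $(1-r^2)/(r-1)=-(1+r)$ and pass to the limit $r\to 1^-$. The key estimate is the elementary identity $|rx_*-y|^2=(1-r)^2+r|x_*-y|^2$ (from $r^2-2ry_1+1$ using $2(1-y_1)=|x_*-y|^2$), which gives $|rx_*-y|\ge r^{1/2}|x_*-y|$, so for $r\in[\mez,1)$ the integrand is dominated by $C|h(y)||x_*-y|^{-n}$. This dominating function is absolutely integrable on $\partial B_1$: near $x_*$, $|h(y)|\lesssim|y-x_*|^{1+\alpha}$ and $d\sigma\sim\rho^{n-2}d\rho$ combine to give an integrable radial factor $\rho^{\alpha-1}$. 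Dominated convergence then yields
$$\lim_{r\to 1^-}\frac{u(rx_*)-g(x_*)}{r-1}=-\frac{1}{n\alpha(n)}\int_{\partial B_1}\frac{h(y)}{|x_*-y|^n}\,d\sigma(y),$$
which coincides with the right-hand side of \eqref{DN:sphere} for $n=3$.

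The hard part will be identifying the difference-quotient limit with $\lim_{r\to 1^-}\nabla u(rx_*)\cdot(rx_*)$. For this I would invoke pointwise boundary $C^{1,\alpha}$ regularity of $u$ at $x_*$, which follows from the analogous regularity of $g$ at $x_*$ via standard boundary Schauder-type estimates on the smooth ball $B_1$ (much easier than Theorem \ref{theo:pointwiseelliptic} since the boundary is smooth here). Concretely, there is a linear form $L$ with $u(x)=u(x_*)+L(x-x_*)+O(|x-x_*|^{1+\alpha})$ for $x\in\overline{B_1}$ near $x_*$; restricting to $x=rx_*$ identifies the difference-quotient limit with $Lx_*$. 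Applying the interior Cauchy gradient estimate to the harmonic function $u-L$ on the ball $B_{(1-r)/2}(rx_*)\subset B_1$ then yields $|\nabla u(rx_*)-L|\le \frac{C}{1-r}\sup_{B_{(1-r)/2}(rx_*)}|u-L|=O((1-r)^\alpha)\to 0$, so $\nabla u(rx_*)\cdot(rx_*)\to Lx_*$ as well. The remaining integral manipulations are routine once the symmetrization is in place.
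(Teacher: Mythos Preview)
Your argument is correct and takes a genuinely different route from the paper's. The paper differentiates the Poisson formula directly, obtaining $\nabla u(rx_*)\cdot(rx_*)=I+J$ with
\[
I=-2c|x|^2\int_{\partial B_1}\frac{g(y)-g(x_*)}{|x-y|^3}\,d\sigma,\qquad
J=-3c(1-|x|^2)\int_{\partial B_1}\frac{x\cdot(x-y)}{|x-y|^5}(g(y)-g(x_*))\,d\sigma,
\]
then symmetrizes and shows $J\to 0$ and $I\to\text{RHS}$ by explicit near/far splitting in polar coordinates. Your approach instead computes the one-sided difference quotient via dominated convergence (the identity $|rx_*-y|^2=(1-r)^2+r|x_*-y|^2$ gives an especially clean dominating function), and then identifies that limit with $\lim_{r\to 1^-}\nabla u(rx_*)\cdot(rx_*)$ through pointwise $C^{1,\alpha}$ regularity of $u$ combined with the interior gradient estimate. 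The trade-off is clear: your integral computation is shorter and avoids the somewhat laborious estimates on $J_1,J_2,I_0$, but you pay for it by importing a regularity statement that the paper's direct computation does not need.

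That regularity step is the one place where your justification is thin. ``Standard boundary Schauder-type estimates'' usually refers to global $C^{1,\alpha}$ data, whereas here you need the \emph{pointwise} version: if $g$ is Lipschitz and $C^{1,\alpha}$ at $x_*$ on a smooth boundary, then the harmonic extension is $C^{1,\alpha}$ at $x_*$. This is true, but deserves a sentence of argument or a precise reference: subtract the tangential affine approximation of $g$ so that the new boundary data is $O(|y-x_*|^{1+\alpha})$, then run a Campanato iteration exactly as in the proof of Theorem~\ref{theo:pointwiseelliptic} (indeed strictly easier, since the boundary is flat after straightening). Alternatively, note that in the paper's only application (Proposition~\ref{prop:roughtest}) one has $u=\phi_r|_B$ with $\phi_r$ already known to be pointwise $C^{1,\alpha}$ by Corollary~\ref{coro:elliptic}, so in that context your extra input is free. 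Either way, the gap is easily filled; it should just be made explicit.
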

\begin{proof}
The following proof works in any dimension, so we present it in three dimensions for the sake of notational simplicity. By Poisson's formula, we have
\[
u(x)=\int_{\p B_1}K(x, y)g(y)d\sigma (y),\quad x\in B_1,\quad K(x, y)=c\frac{1-|x|^2}{|x-y|^3},\quad c=\frac{1}{3\alpha(3)}.
\]
If $g=1$, then $u=1$ and hence $\int_{\p B_1}\na_xK(x, y)d\sigma(y)=0$ for all $x\in B_1$. Consequently we can write
\bq
\begin{aligned}
\na u(x)\cdot x&=\int_{\p B_1}x\cdot \na_x K(x, y)[g(y)-g(x_*)]d\sigma(y)\\
&= -2c|x|^2\int_{\p B_1}\frac{1}{|x-y|^3}[g(y)-g(x_*)]d\sigma (y)\\
&\qquad-3c(1-|x|^2)\int_{\p B_1}\frac{x\cdot (x-y)}{|x-y|^5}[g(y)-g(x_*)]d\sigma (y)\\
&:=I+J.
\end{aligned}
\eq
We set $x=rx_*$ with $r\in (2/3, 1)$. Then, we can rewrite $J$ as
\bq
J=-3c(1-r^2)\int_{\p B_1}\frac{r^2-ry_1}{|rx_*-y|^5}[g(y)-g(x_*)]d\sigma (y)
\eq
and make  the change of  variables $y=(y_1, y_2, y_3)\mapsto \wt y=(y_1, -y_2, -y_3)$ to obtain
\begin{align*}
J&=-\tdm c(1+r)r\int_{\p B_1}\frac{(1-r)(r-y_1)}{|rx_*-y|^5}[g(y)+g(\wt y)-2g(x_*)]d\sigma (y)\\
&:=-\tdm c(1+r)r\int_{\p B_1}h_r(y)d\sigma(y).
\end{align*}
We shall prove that $\lim_{r\to 1}\int_{\p B_1}h_r(y)d\sigma(y)=0$, so that $\lim_{r\to 1}J=0$. To this end we split
\[
\int_{\p B_1}h_r(y)d\sigma(y)=\int_{\p B_1 \cap B_{1/5}(x_*)}h_r(y)d\sigma(y)+\int_{\p B_1 \setminus B_{1/5}(x_*)}h_r(y)d\sigma(y):=J_1+J_2.
\]
For $9/10<r<1/10$ and $y\in \p B_1 \setminus B_{1/5}(x_*)$, we have $|rx_*-y|\ge |x_*-y|-|rx_*-x_*|>1/10$, hence $J_2\to 0$ as $r\to 1$.  As for $J_1$,  if $y=(y_1, y_2, y_3)$ we let $y_*=(1, y_2, y_3)$ and write
\begin{align*}
g(y)+g(\wt y)-2g(x_*)=g(y_*)+g(\wt{y_*})-2g(x_*)+[g(y)-g(y_*)]+[g(\wt{y})-g(\wt {y_*})],
\end{align*}
where
\[
|g(y)-g(y_*)|\le (1-y_1)\|g\|_{\Lip},\quad |g(\wt{y})-g(\wt {y_*})|\le (1-y_1)\|  g\|_{\Lip}.
\]
On the other hand,  since $g$ belongs to $C(\p B)$ and  is $C^{1, \alpha}$ at $x_*$, and $y_*-x_*=-(\wt{y_*}-x_*)$, we have
\[
|g(y_*)+g(\wt{y_*})-2g(x_*)|\le C|y_*-x_*|^{1+\alpha}\le C(y_2^2+y_3^2)^{\frac{1+\alpha}{2}}.
\]
In addition, on $\p B_1 \cap B_{1/5}(x_*)$  we have $y_2^2+y_3^2=1-y_1^2=(1-y_1)(1+y_1)\ge 1-y_1$. It follows that
\bq\label{C1a:g:sphere}
|g(y)+g(\wt y)-2g(x_*)|\le C(y_2^2+y_3^2)^{\frac{1+\alpha}{2}}\quad\forall y\in \p B_1 \cap B_{1/5}(x_*).
\eq
Then, setting $\eps=1-r$, we obtain
\[
|h_r(y)|\le C\frac{\eps |1-y_1-\eps|}{[(1-y_1-\eps)^2+y_2^2+y_3^2]^\frac{5}{2}} (y_2^2+y_3^2)^{\frac{1+\alpha}{2}}\quad\forall y\in \p B_1 \cap B_{1/5}(x_*).
\]
On $\p B_1\cap B_{1/5}(x_*)$ we have that $y_1=[1-(y_2^2+y_3^2)]^\mez=y_1(y_2, y_3)$ is a graph, and
\[
[1+(\frac{\p y_1}{\p y_2})^2+(\frac{\p y_1}{\p y_3})^2]^\mez=\frac{1}{[1-(y_2^2+y_3^2)]^\mez}\le \frac{1}{(1-1/5^2)^\mez},
\]
hence
\[
J_1\le C\int_{y_2^2+y_3^2\le 1/5^2}\frac{\eps |1-y_1-\eps|}{[(1-y_1-\eps)^2+y_2^2+y_3^2]^\frac{5}{2}} (y_2^2+y_3^2)^{\frac{1+\alpha}{2}}dy_2dy_3.
\]
Switching to polar coordinates for $(y_2, y_3)$, we find
\[
J_1\le C\int_{\tt=0}^{2\pi}\int_{s=0}^{1/5}\frac{\eps (w(s)+\eps)}{[(w(s)-\eps)^2+s^2]^\frac{5}{2}} s^{1+\alpha}sdsd\tt,
\]
where  $w(s)=1-y_1(s)\in (0, 1)$ satisfies $w(s)(2-w(s))=s^2$, whence $w(s)\le s^2$. If $s\in (0, \eps)$, then $w(s)\le s^2< \eps^2$, hence $(w(s)-\eps)^2\ge \eps^2/2$ for small $\eps$.  Hence,
\begin{align*}
J_1\le C\int_{\tt=0}^{2\pi}\int_{s=0}^{\eps}\frac{\eps^2}{\eps^5} s^{2+\alpha}dsd\tt+C\int_{\tt=0}^{2\pi}\int_{s=\eps}^{1/5}\frac{\eps (s^2+\eps)}{s^5} s^{2+\alpha}dsd\tt\le C\eps^\alpha.
\end{align*}
Therefore, $\lim_{r\to 1}J_1=0$ as desired.

Regarding $I$ we symmetrize as above to have
\[
 I= -cr^2\int_{\p B_1}\frac{1}{|rx_*-y|^3}[g(y)+g(\wt y)-2g(x_*)]d\sigma (y),
 \]
 so that \eqref{DN:sphere} would follow if
 \bq\label{DN:sphere:limI}
 \lim_{r\to 1} \int_{\p B_1}\left[\frac{1}{|rx_*-y|^3}-\frac{1}{|x_*-y|^3}\right][g(y)+g(\wt y)-2g(x_*)]d\sigma (y)=0.
 \eq
 For $y\in \p B_1\setminus B_{1/5}(x_*)$, the $[\dots]$ term is bounded and tends to $0$ as $r\to 0$. Thus if suffices to consider the integral $I_0$ corresponding to  $y\in \p B_1\cap B_{1/5}(x_*)$. Since
 \[
\left||rx_*-y|^3-|x_*-y|^3\right|\le C(1-r)\left[(r-y_1)^2+(1-y_1)^2+y_2^2+y_3^2\right],
 \]
 setting $\eps=1-r$, $w=w(y_2, y_3)=1-y_1>0$ and using \eqref{C1a:g:sphere} we deduce
 \[
 I_0\le C\int_{y_2^2+y_3^2<1/5^2} \frac{\eps\left[(w-\eps)^2+w^2+y_2^2+y_3^2\right]}{[(w-\eps)^2+y_2^2+y_3^2]^\tdm[w^2+y_2^2+y_3^2]^\tdm}(y_2^2+y_3^2)^{\frac{1+\alpha}{2}}dy_2dy_3.
 \]
 Switching to polar coordinates yields
 \begin{align*}
 I_0&\le C\int_{\tt=0}^{2\pi}\int_{s=0}^{1/5} \frac{\eps\left[(w-\eps)^2+w^2+s^2\right]}{[(w-\eps)^2+s^2]^\tdm[w^2+s^2]^\tdm}s^{1+\alpha}s dsd\tt\\
 &\le C\int_{\tt=0}^{2\pi}\int_{s=0}^{1/5} \frac{\eps(\eps^2+s^2)}{[(w-\eps)^2+s^2]^\tdm[w^2+s^2]^\tdm}s^{1+\alpha}s dsd\tt,
 \end{align*}
 where we have used the fact that $w(s)\le s^2\le s$ in the domain of integration. For $s\le \eps$ we have $\eps-w(s)\ge \eps/2$, hence
 \begin{align*}
 I_0\le C\int_{s=0}^\eps \frac{\eps \eps^2}{\eps^3 s^3}s^{2+\alpha} ds+C\int_{s=\eps}^{1/5} \frac{\eps s^2}{s^6}s^{2+\alpha} ds\le C\eps^\alpha.
 \end{align*}
 This completes the proof of \eqref{DN:sphere:limI}.
\end{proof}
\begin{rema}\label{rema:rotation}
For any $x_\sharp\in \p B_1$, let $R$ be a rotation the maps $x_\sharp$ to $x_*=(1, 0, \dots, 0)$. Since $\Delta$ is rotation invariant, the function $\wt u(\wt x)=u(x)$, $x=R\wt x$, is harmonic in $B_1$, and we have $\na  u(r x_\sharp)\cdot (r x_\sharp)=\na \tilde u(r x_*)\cdot (r x_*)$, hence \eqref{DN:sphere} yields
\bq
\na  u(r x_\sharp)\cdot (r x_\sharp)=\frac{-1}{n\alpha(n)}\int_{\p B_1}\frac{g(Ry)+g(\wt{Ry})-2g(x_\sharp)}{|y-x_\sharp|^3}d\sigma(y).
\eq
\end{rema}

\vspace{.1in}
\noindent{\bf{Acknowledgment.}} H. Dong is partially supported by the NSF under agreements DMS-2055244, DMS-2350129, and the Simons Fellows Award 007638. F. Gancedo was partially supported by the ERC through the Starting Grant H2020-EU.1.1.-639227, by the MICINN (Spain) through the grants EUR2020-112271 and PID2020-114703GB-I00,
by the grant RED2022-134784-T funded by MCIN/AEI/10.13039/501100011033 and by the Junta de Andalucía through the grant P20-00566. F. Gancedo acknowledges support from IMAG, funded by MICINN through the Maria de Maeztu
Excellence Grant CEX2020-001105-M/AEI/10.13039/501100011033. H. Q. Nguyen was partially supported by NSF grants  DMS-2205734 and DMS-2205710.

\vspace{.1in}
\noindent{\bf{Data availability.}} This manuscript has no associated data

\end{document}